\documentclass{amsart}

\usepackage{latexsym,amssymb,amsmath, amsfonts}
\usepackage{amsthm}
\usepackage{amscd}
\usepackage{bbm}
\usepackage{float}

\numberwithin{equation}{section}
\usepackage{enumitem,hyperref}
\hypersetup{
    colorlinks = true,
}
\parindent = 0 pt
\parskip = 12 pt

\newcommand{\R}{\mathbb{R}}

\newcommand{\be}{\begin{equation}}
\newcommand{\en}{\end{equation}}
\newcommand{\ee}{\end{equation}}

\DeclareMathOperator{\dist}{dist}
\DeclareMathOperator{\supp}{supp}

\DeclareMathOperator{\sign}{sign}
\newcommand{\bt}{\begin{theorem}}
\newcommand{\et}{\end{theorem}}
\newcommand{\bp}{\begin{proof}}
\newcommand{\ep}{\end{proof}}
\newcommand{\bc}{\begin{cor}}
\newcommand{\ec}{\end{cor}}
\newcommand{\bl}{\begin{lemma}}
\newcommand{\el}{\end{lemma}}
\newcommand{\bprop}{\begin{prop}}
\newcommand{\eprop}{\end{prop}}

\newtheorem{theorem}{Theorem}[section]

\newtheorem{remark}[theorem]{Remark}
\newtheorem{lemma}[theorem]{Lemma}
\newtheorem{definition}[theorem]{Definition}
\newtheorem{prop}[theorem]{Proposition}

\newtheorem{cor}[theorem]{Corollary}

\usepackage{tikz} 
\usepackage{pgfplots}
\usetikzlibrary{intersections, pgfplots.fillbetween}
\usepackage{tikz-3dplot}
\pgfplotsset{compat=1.7}
\usetikzlibrary{patterns}

\author{Ricardo. C. Freire}
\address{IMPA, Estrada Dona Castorina 110, Rio de Janeiro 22460-320, RJ Brasil}\email{rickcar8@impa.br}
\author{Argenis J. Mendez}
\thanks{A. J. Mendez was partially supported by CMM Conicyt PIA AFB170001.}
\address{Centro de Modelamiento Matemático, Universidad de Chile, Beauchef 851 Torre Norte Piso 7, Santiago de Chile}\email{amendez@cmm.uchile.cl}
\author{Oscar Ria\~no}
\address{Department of Mathematics \& Statistics, Florida International University, Miami, FL 33199, USA
}\email{ogrianoc@gmail.com}
\thanks{ Ricardo. C. Freire  was  supported by  CAPES-Brazil.}
\date{September 2020}
\title[ Benjamin-Ono-Zakharov Kuznetsov equation]{On some regularity properties for the dispersive generalized Benjamin-Ono-Zakharov-Kuznetsov Equation}
\keywords{Local well-posedness, Propagation of regularity, Smoothing effect, Pseudo-Differential Operators.}

\begin{document}

\begin{abstract} 
This work aims to study some  smoothness properties  concerning   the initial value problem associated to the dispersive generalized Benjamin-Ono-Zakharov-Kuznetsov equation. More precisely, we prove  that  the solutions to this model satisfy the so-called propagation of regularity. Roughly speaking, this principle states that if the initial data enjoys some extra smoothness prescribed on a family of half-spaces, then the regularity is propagated with infinite speed. In this sense, we prove that regardless of the scale measuring the extra regularity in such hyperplane collection, then all this regularity is also propagated by solutions of this model. Our analysis is mainly based on the deduction of propagation formulas relating  homogeneous and non-homogeneous derivatives in certain regions of the plane.
\end{abstract}

\maketitle

\section{Introduction}\label{intro}
In this work we are interested in study some regularity properties for solutions of the initial value problem (IVP) associated to the \emph{dispersive generalized Benjamin-Ono-Zakharov-Kuznetsov} equation:
\begin{equation}\label{BOZK}
    \left\{\begin{aligned}
    &\partial_t u -D^{\alpha+1}_x u_x+u_{xyy}+uu_x =0,\quad (x,y,t)\in \R^{3}, \, 0\leq \alpha<1, \\
    &u(x,y,0)=u_0(x,y),
    \end{aligned}\right.
\end{equation}
where the operator $D_x^s$ denotes the  \emph{homogeneous derivative of order} $s\in \mathbb{R}$ with respect to the $x$-variable and it is  defined by 
$$D_x^s f=(c_{s}|\xi|^s \widehat{f}(\xi,\eta))^{\vee},$$
or equivalently  as $D_{x}^{s}=\left(\mathcal{H}_{x}\partial_{x}\right)^{s}$, whence $\mathcal{H}_x$ denotes the \emph{ Hilbert transform}  in the $x$-direction, that is,
\begin{equation*}   
(\mathcal{H}_xf)(x,y)=\frac{1}{\pi} \mathrm{p.v.} \int\frac{f(z,y)}{x-z} dz=\big(-i\sign(\xi)\widehat{f}(\xi,\eta)\big)^{\vee}(x,y).
\end{equation*}

The equation \eqref{BOZK} arises as a mathematical model to study the effect of dispersion on the propagation direction applied to the initial value problem for the Zakharov-Kuznetsov (ZK) equation. We recall that for $\alpha=1$, \eqref{BOZK} is the well-known \emph{Zakharov-Kuznetsov} (ZK) equation
\begin{equation}
\partial_{t}u+\partial_{x}\left(\Delta u+\frac{u^{2}}{2}\right)=0
\end{equation}
that describe the propagation of ionic-acoustic waves in magnetized plasma (see \cite{ZK})
and  when $\alpha=0$, the equation \eqref{BOZK} coincides with the \emph{Benjamin-Ono-Zakharov-Kuznetsov} (BOZK)
\begin{equation}
\partial_{t}u-\mathcal{H}_{x}\partial_{x}^{2}u+\partial_{y}^{2}u+u\partial_{x}u=0,
\end{equation}
that is   presented in \cite{BOZKded,BOZKded1} as a model for thin nanoconductors on a dielectric substrate. 

The main objective of our work is to determine how dispersion affects the regularity of solutions when we  restrict the initial data  to a certain class of subsets of the Cartesian plane. This is why we consider a nonlinear model that represents a dispersive interpolation between the ZK equation and the BOZK equation. In fact, in many problems arising from Physics or Continuum Mechanics,  these models are considered  to  determine  competition between  the  nonlinearity  and the dispersion. In our case,  we are interested in  studying the propagation of regularity of solutions  of the IVP \eqref{BOZK}. Our motivation comes from the results shown   in \cite{IsazLP}, where considering suitable  solutions to the IVP associated to the $k-$generalized KdV equation, it was determined  propagation of regularity on the right-hand side (r.h.s) of the initial datum for positive times. In that sense, we  will show that the solutions of \eqref{BOZK}  satisfies this property when we restrict ourselves to an appropriated class of half-spaces determined by the dominant direction of the dispersion.

Real-valued solutions of the IVP \eqref{BOZK} (smooth enough) formally satisfy are the following conserved quantities (time invariant):
\begin{align} 
\mathcal{I}(u)&=\int_{\mathbb{R}^{2}} u(x,y,t)\, dxdy\qquad 
\mathcal{M}(u)=\int_{\mathbb{R}^{2}} u^2(x,y,t)\, dxdy, \label{mass}\\
\mathcal{E}(u)&=\frac{1}{2} \int_{\mathbb{R}^{2}} \left\{|D_x^{\frac{\alpha+1}{2}}u(x,y,t)|^2+|\partial_y u(x,y,t)|^2-\frac{1}{3}u^3(x,y,t)\right\} \, dxdy. \label{Energy}
\end{align}

To  describe  our main result, we shall  fix properly the space solution where the property described above will take place. Following \cite{Kato}, we say  that the initial value problem IVP \eqref{BOZK} is \emph{locally well-posed} (LWP) in the Banach space $X,$  if for every initial condition $u_{0}\in X$, there exists $T>0,$ and a unique solution $u(t)$ satisfying:
\begin{equation}\label{e1}
 u\in  C \left([0, T] : X\right)\cap A_{T}
 \end{equation}
where  $A_{T}$  is an auxiliary function space. Moreover, the solution map $u_{0}\longmapsto u$, is continuous from $X$ into the class \eqref{e1}. If $T$ can be taken arbitrarily large, one says that the IVP \eqref{BOZK}
 is \emph{globally well-posed}  (GWP) in the space $X.$
 
Regarding (LWP) for \eqref{BOZK} in $L^2$-type Sobolev spaces, the current best known result available was determined by Ribaud and Vento \cite{RibVento}. They addressed this question in the space $E^s$ defined by $\|f\|_{E^s}=\left\|(1+|\xi|^{\alpha+1}+\eta^2)^{s/2}\widehat{f}(\xi,\eta)\right\|_{L^2}$. It was established that \eqref{BOZK} is LWP in $E^s$ whenever $s>\frac{2}{\alpha+1}-\frac{3}{4}$ for $0\leq \alpha \leq 1$, and (GWP) in the energy space $E^{1/2}$ as soon as $\alpha >3/5$. We remark that their results are based on the short-time Bourgain spaces approach developed by Ionescu, Kenig and Tataru \cite{Ionescu2008}, combined with localized Strichartz estimates and a modified energy technique. Very recently, Cunha and Pastor in \cite{CunhaPastor} studied the Cauchy problem \eqref{BOZK} in weighted anisotropic Sobolev spaces as well as some unique continuation principles, which establish optimal spatial decay in the $x$-spatial variable.

Since our main purposes depend on techniques based on weighted energy estimates for the equation in \eqref{BOZK}, it is not clear how to address the propagation of regularity phenomena for solutions provided by the LWP result in \cite{RibVento} relaying on the short-time Bourgain spaces. Instead, we establish the following local well-posedness result which is suitable with the methods developed in this work.
\begin{theorem}\label{imprwellpos}
Assume that $0\leq \alpha < 1$ fixed. Let $s>s_{\alpha}$, where $s_{\alpha}:=(17-2\alpha)/12$. Then, for any $u_0\in H^s(\mathbb{R}^d)$, there exist a positive  time $T=T(\left\|u_0\right\|_{H^s}),$ and a unique solution $u$ to \eqref{BOZK} that belongs to
\begin{equation}\label{equclass}
    C\big([0,T];H^s(\mathbb{R}^2)\big)\cap L^1\big([0,T];W^{1,\infty}(\mathbb{R}^2)\big).
\end{equation}
Moreover, the flow map $u_0 \mapsto u(t)$ is continuous from $H^s(\mathbb{R}^2)$ to $H^s(\mathbb{R}^2)$.
\end{theorem}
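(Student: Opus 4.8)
The plan is to run an energy method. The threshold $s_\alpha=(17-2\alpha)/12$ sits well above the scaling level, and the statement already advertises the auxiliary space $L^1([0,T];W^{1,\infty}(\R^2))$, which is precisely the quantity an energy argument needs in order to absorb the derivative loss coming from the nonlinearity $uu_x$. Write $L:=D_x^{\alpha+1}\partial_x-\partial_x\partial_y^2$ for the linear operator, whose symbol $i(c_{\alpha+1}|\xi|^{\alpha+1}\xi+\xi\eta^2)$ is purely imaginary, so that the associated propagator $U(t):=e^{tL}$ is unitary on $L^2(\R^2)$. Recast \eqref{BOZK} through Duhamel's formula
$$u(t)=U(t)u_0-\frac12\int_0^tU(t-t')\,\partial_x\big(u^2\big)(t')\,dt'.$$
The two quantities to propagate are $\sup_{[0,T]}\|u(t)\|_{H^s}$ and $\|u\|_{L^1_TW^{1,\infty}}$, and I would set up a closed system of nonlinear estimates for the pair $\big(\|u\|_{L^\infty_TH^s},\,\|u\|_{L^1_TW^{1,\infty}}\big)$, solvable for $T=T(\|u_0\|_{H^s})$ small.

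For the energy piece, apply $J^s:=(1-\de)^{s/2}$ to the equation, pair with $J^su$ in $L^2$, and use skew-adjointness of $L$ so that the dispersive part drops out. The remaining nonlinear contribution is a commutator expression which, by the Kato–Ponce / fractional Leibniz commutator estimates, is bounded by $\|\nabla u\|_{L^\infty}\|u\|_{H^s}^2$. This gives $\tfrac{d}{dt}\|u\|_{H^s}^2\lesssim \|\nabla u\|_{L^\infty}\|u\|_{H^s}^2$, so Gronwall closes the $H^s$ bound as soon as $\|u\|_{L^1_TW^{1,\infty}}$ is under control.

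The heart of the matter, and where the exponent $(17-2\alpha)/12$ is forced, is the $L^1_TW^{1,\infty}$ estimate. Here I would first establish the linear dispersive estimates for $U(t)$ by analyzing the oscillatory integral with phase $\phi(\xi,\eta)=c_{\alpha+1}|\xi|^{\alpha+1}\xi+\xi\eta^2$: an anisotropic local smoothing estimate of Kato type (gaining derivatives of order $(\alpha+2)/2$ in the $x$-variable), a maximal-function-in-time estimate, and a Strichartz estimate, all obtained by stationary phase / van der Corput together with a Littlewood–Paley decomposition adapted to the anisotropy between the $\xi|\xi|^{\alpha+1}$ and $\xi\eta^2$ contributions. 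Combining these with Sobolev embedding to pass from mixed $L^q_TL^p_{xy}$ norms to $L^1_TL^\infty_{xy}$, and applying them to both terms of the Duhamel formula, should yield $\|u\|_{L^1_TW^{1,\infty}}\lesssim \|u_0\|_{H^s}+T^{\theta}\|u\|_{L^\infty_TH^s}\|u\|_{L^1_TW^{1,\infty}}$ for some $\theta>0$ precisely in the range $s>(17-2\alpha)/12$; for $T$ small the last term is absorbed. I expect the calibration of these linear estimates — in particular keeping track of the full $W^{1,\infty}$ derivative count against the available smoothing gain in the two directions — to be the main technical obstacle, and the likely source of the specific value $s_\alpha$.

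It remains to turn the a priori estimates into an existence/uniqueness statement. I would regularize by adding a parabolic term $\mu\,\de u$ with $\mu>0$, solve the regularized problem by a contraction argument, extract bounds uniform in $\mu$ from the estimates above, and pass to the limit $\mu\to 0^+$ via Aubin–Lions compactness, producing a solution in the class \eqref{equclass}. Uniqueness follows from an $L^2$ energy estimate on the difference $w=u_1-u_2$ of two solutions: the only dangerous term, $\tfrac12\partial_x\big(w(u_1+u_2)\big)$, is controlled after integration by parts by $\big(\|\nabla u_1\|_{L^\infty}+\|\nabla u_2\|_{L^\infty}\big)\|w\|_{L^2}^2$, so Gronwall together with the $L^1_TW^{1,\infty}$ membership gives $w\equiv 0$. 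Finally, the continuity of the flow map and the upgrade from $L^\infty_TH^s$ to $C([0,T];H^s)$ follow from a Bona–Smith argument, approximating $u_0$ by smooth data and invoking the uniform estimates and the continuity of the regularized flows.
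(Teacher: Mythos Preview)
Your overall architecture --- energy estimate closed via Kato--Ponce, an auxiliary $L^1_TW^{1,\infty}$ bound fed back into Gronwall, parabolic regularization for smooth solutions, and Bona--Smith for existence and continuous dependence --- matches the paper's scheme. However, the decisive step, the $L^1_TW^{1,\infty}$ estimate, is handled differently in the paper, and your proposal is vague precisely where the threshold $s_\alpha=(17-2\alpha)/12$ is generated.

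The paper does not use Kato-type local smoothing or maximal-in-time estimates for this purpose. Instead it uses the \emph{short-time Strichartz} method of Koch--Tzvetkov (refined by Kenig--K\"onig): after a Littlewood--Paley decomposition in the $x$-frequency, one derives from the dispersive decay $\|P_j^xU(t)\phi\|_{L^p}\lesssim |t|^{-5/6(1-2/p)}2^{-j(\alpha+1/2)(1-2/p)/3}\|\phi\|_{L^{p'}}$ a frequency-localized $L^q_tL^p_{xy}$ Strichartz estimate, and then --- this is the key trick you are missing --- for each dyadic frequency $2^j$ one partitions $[0,T]$ into $\sim 2^j$ subintervals of length $\sim T2^{-j}$, applies Duhamel and Strichartz on each piece, and sums. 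The partition converts the lost half-power of time in the $L^2_TL^\infty_{xy}$ endpoint into a loss of $1/2$ derivative, which together with the dispersive gain $-(1/6)(\alpha+1/2)$ yields the index $1/2-(1/6)(\alpha+1/2)=(5-2\alpha)/12=s_\alpha-1$. Without this time-slicing mechanism, a direct Strichartz + Sobolev approach does not close at $s_\alpha$, and the smoothing/maximal route you sketch would require nontrivial two-dimensional analogues (anisotropic $L^p_xL^q_{yT}$ maximal estimates for $U(t)$) that are neither standard nor established in the paper; you have not indicated how to obtain them or why they would produce exactly this exponent. So as written, the proposal has a genuine gap at the point where the numerology is fixed.

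A secondary difference: the paper does not pass to the limit via Aubin--Lions compactness. It shows directly that the regularized solutions $\{u_n\}$ form a Cauchy sequence in $C([0,T];H^s)\cap L^1_TW^{1,\infty}$, by first proving $n\|u_n-u_m\|_{L^1_TL^\infty}+\|\nabla(u_n-u_m)\|_{L^1_TL^\infty}\to 0$ (again via the refined Strichartz lemma applied to the difference equation) and then feeding this into an $H^s$ energy estimate for $u_n-u_m$. Your compactness route would give existence but not immediately the strong $C_tH^s$ convergence nor the continuity of the flow map; the paper's Bona--Smith argument handles both at once.
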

Theorem \ref{imprwellpos} is deduced by means of the short-time linear Strichartz approach method developed by Koch and Tzvetkov \cite{KochT}, and its extension given by Kenig and K\"onig \cite{KenigKo}. See \cite{OscarHBO,KenigKP,FKP} for applications to higher-dimensional models. 

We  shall emphasize that the Sobolev regularity attained in Theorem \ref{imprwellpos} does not yield to an improvement with respect to the conclusions in \cite{RibVento}, and to the results in \cite{CUNHA2016,ANPhD} for $\alpha=0$. Nevertheless, when $0\leq \alpha<1$, Theorem \ref{imprwellpos} states the best-known result involving solutions of \eqref{BOZK} in the class \eqref{equclass}. This conclusion is useful to deal with techniques based on energy estimates as the one we are interested in this work. 

Since we have  described all the requirements that our  space solution has to satisfy, we present our main result  that is summarized in the following theorem.
\begin{theorem}\label{mainTheo}
Assume $0\leq \alpha <1$ fixed. Let $u_0 \in H^{s_{\alpha}^{+}}(\mathbb{R}^2)$ where $s_{\alpha}:=(17-2\alpha)/12$. If for some $s\in \mathbb{R}$, $s>s_{\alpha}$ and $x_0 \in \mathbb{R}$
\begin{equation}\label{hypomain}
\left\|J^s_x u_0\right\|_{L^{2}_{xy}((x_0,\infty)\times \mathbb{R})}^2=\int_{-\infty}^{\infty}\int_{x_0}^{\infty}(J^s_x u_0)^2(x,y)\, dx dy <\infty,
\end{equation}
then the corresponding solution of the IVP \eqref{BOZK} provided by Theorem \ref{imprwellpos} satisfies for any $v\geq 0$ and any $\epsilon>0$,
\begin{equation}\label{e1.1}
\begin{split}
&\sup_{0\leq t \leq T}\int_{-\infty}^{\infty}\int_{x_0+\epsilon-vt}^{\infty} (J^r_x u)^2(x,y,t) \, dxdy\\ &+\int_0^T\int_{-\infty}^{\infty}\int_{x_0+\epsilon-v  t}^{x_0+\tau-v t} \big((D_x^{\frac{\alpha+1}{2}}J^s_x u)^2(x,y,t)+(\partial_y J^s_x u)^2(x,y,t) \big)\, dxdy dt\leq c,
\end{split}
\end{equation}
for all $r\in(0,s],$ where $c=c\left(\epsilon;T;v;\|u_0\|_{H^{s_{\alpha}^{+}}};\|J^s_x u_0\|_{L^2_{xy}((x_0,\infty)\times \mathbb{R}))}\right)>0.$ 

If in addition to \eqref{hypomain},
\begin{equation}
\|J^{s+\frac{1-\alpha}{2}}u_{0}\|_{L^{2}_{xy}((x_0,\infty)\times \mathbb{R})}^2=\int_{-\infty}^{\infty}\int_{x_0}^{\infty}\left(J^{s+\frac{1-\alpha}{2}}_x u_0\right)^2(x,y)\, dx dy <\infty, 
\end{equation}
then
 for any $v\geq 0$, $\epsilon>0$ and $\tau \geq 5 \epsilon,$
\begin{equation*}
\begin{split}
&\sup_{0\leq t \leq T}\int_{-\infty}^{\infty}\int_{x_0+\epsilon-vt}^{\infty} (J^r_x u)^2(x,y,t) \, dxdy\\
& +\int_0^T\int_{-\infty}^{\infty}\int_{x_0+\epsilon-v  t}^{x_0+\tau-v t} \big((J^{s+1}_x u)^2(x,y,t)+(\partial_{y} J^{s+\frac{1-\alpha}{2}}_x u)^2(x,y,t) \big)\, dxdy dt\leq c,
\end{split}
\end{equation*}
for all $r\in\left(0,s+\frac{1-\alpha}{2}\right],$ where  the constant depends on the following parameters
 $c=c\big(\epsilon;T;v;\|u_0\|_{H^{s_{\alpha}^{+}}};\|J^{s+\frac{1-\alpha}{2}}_x u_0\|_{L^2_{xy}((x_0,\infty)\times \mathbb{R}))}\big)>0$.
\end{theorem}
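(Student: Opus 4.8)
The plan is to continue the weighted-energy induction that produces the first estimate \eqref{e1.1}, now carried out at the elevated level $s':=s+\frac{1-\alpha}{2}$, and to feed \eqref{e1.1} itself back in as the a priori input that closes the higher-order estimate. The arithmetic is consistent: the $x$-dispersion $\partial_x D_x^{\alpha+1}$ has order $\alpha+2$ and its local smoothing gains $\frac{\alpha+1}{2}$ derivatives, so running the scheme at level $s'$ produces exactly $D_x^{(\alpha+1)/2}J^{s'}_x u=J^{s+1}_x u$ (up to low frequencies) in $x$ and $\partial_y J^{s'}_x u=\partial_y J^{s+\frac{1-\alpha}{2}}_x u$ in $y$, the two quantities in the claimed slab bound; the extra hypothesis on $J^{s+\frac{1-\alpha}{2}}_x u_0$ supplies precisely the initial regularity this iteration requires.

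Concretely, set $w:=J^{s'}_x u$; since $J^{s'}_x$ is an $x$-Fourier multiplier it commutes with the linear part, so $w$ solves \eqref{BOZK} with forcing $J^{s'}_x(uu_x)$. I would choose a family of smooth non-decreasing weights $\chi=\chi(x+vt)$, with $\chi\equiv 0$ left of $x_0+\epsilon-vt$ and $\chi\equiv 1$ right of $x_0+\tau-vt$, so that $\chi'\geq 0$ is supported in the moving slab; the restriction $\tau\geq 5\epsilon$ fixes the nesting of these weights needed below. After regularizing (frequency truncation and smooth data) so that every integral is finite, I form $\frac12\frac{d}{dt}\int\!\!\int w^2\chi$. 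Using that $\partial_x D_x^{\alpha+1}$ is skew-adjoint and that $u$ is real-valued, the $x$-dispersive contribution becomes $-\frac12\langle[\partial_x D_x^{\alpha+1},\chi]w,w\rangle$, whose principal part $-\frac{\alpha+2}{2}\int\!\!\int\chi'\,|D_x^{(\alpha+1)/2}w|^2$ passes to the left-hand side as a positive $x$-smoothing term, while a plain integration by parts turns $\partial_x\partial_y^2 w$ into the positive term $\frac12\int\!\!\int\chi'\,(\partial_y w)^2$. The remaining contributions are: transport and low-frequency pieces controlled by $v$ and $\|u_0\|_{H^{s_\alpha^+}}$; the nonlinearity, split by the fractional Leibniz rule into $\int\!\!\int(\chi' u+\chi u_x)\,w^2$ plus Kato--Ponce commutators $[J^{s'}_x,u]u_x$, handled through the $L^1_T W^{1,\infty}$ bound of Theorem \ref{imprwellpos}; and the subprincipal part of the dispersive commutator. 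A time integration and Gronwall then yield both the sup-in-time half-space bound and the slab bound, and the range $r\in(0,s']$ follows by the same computation at each lower level.

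The crux is the nonlocal dispersive commutator $[\partial_x D_x^{\alpha+1},\chi]$. Unlike the local third-order operator of \cite{IsazLP}, it cannot be cheaply tamed by integration by parts; instead one must expand it through the pseudo-differential symbolic calculus underlying this paper, extracting the positive principal symbol $(\alpha+2)|\xi|^{\alpha+1}\chi'(x)$ — which furnishes the $x$-smoothing above — while bounding the remainder. The feature that makes the scheme close is that this remainder is genuinely subprincipal, of order $\alpha$, so it pairs to the scale $s'+\frac{\alpha}{2}=s+\frac12$; since $\alpha\geq 0$ gives $s+\frac12\leq s+\frac{\alpha+1}{2}$, it is dominated by the lower-level slab smoothing already guaranteed by \eqref{e1.1}. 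The same comparison governs the nonlinear slab errors, which sit at scale $s'=s+\frac{1-\alpha}{2}\leq s+\frac{\alpha+1}{2}$ and are therefore again absorbed by \eqref{e1.1}. I expect the hardest technical work to be precisely here: proving a commutator estimate sharp enough to separate the principal smoothing from a remainder of strictly lower order, and localizing the Kato--Ponce commutators in the nonlinearity given that the surplus regularity $J^{s'}_x u$ is available only on a half-space rather than globally.

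Finally, I would remove the regularization by a limiting argument, noting that all right-hand sides are bounded uniformly in the truncation parameters by the stated constant $c\big(\epsilon;T;v;\|u_0\|_{H^{s_\alpha^+}};\|J^{s+\frac{1-\alpha}{2}}_x u_0\|_{L^2_{xy}((x_0,\infty)\times\R)}\big)$, and invoking weak lower semicontinuity of the $L^2$-based norms together with the continuous dependence provided by Theorem \ref{imprwellpos} to transfer the estimates to the genuine solution.
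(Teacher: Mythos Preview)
Your strategy is correct and coincides with the paper's: weighted energy estimates with a moving cutoff $\chi(x+vt)$, the dispersive commutator supplying the positive smoothing terms, the nonlinearity handled by Kato--Ponce, and an induction in which the slab smoothing at a lower level feeds the half-space estimate at the next. Your arithmetic identifying $D_x^{(\alpha+1)/2}J^{s'}_x u$ with $J^{s+1}_x u$ modulo low frequencies is exactly what the paper exploits (via Lemma~\ref{fraclemma2}(I)), and your observation that the subprincipal errors sit below the scale $s+\frac{\alpha+1}{2}$ already controlled by \eqref{e1.1} is precisely how the induction closes.

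Where you are vague, however, the paper supplies specific machinery, and one of your suggestions would not work as stated. First, generic H\"ormander calculus does not apply to $[\partial_x D_x^{\alpha+1},\chi]$ because the symbol $\xi|\xi|^{\alpha+1}$ is not smooth at $\xi=0$; the paper instead invokes the Ginibre--Velo expansion (Proposition~\ref{valueofn} with $a=\alpha+2$, $n=0$), which yields the principal term $\frac{\alpha+2}{2}\big(\|D_x^{(\alpha+1)/2}w\|^2+\|\mathcal{H}_xD_x^{(\alpha+1)/2}w\|^2\big)\chi'$ plus a remainder $R_0(\alpha+2)$ that is genuinely bounded $L^2\to L^2$---sharper than your order-$\alpha$ guess and sidestepping the symbol singularity. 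Second, the localization problem you flag (the surplus regularity $J^{s'}_x u$ lives only on a half-space, so global Kato--Ponce cannot be applied directly) is resolved not by a sharpening of Kato--Ponce but by the partition $1=\chi_{\epsilon,b}+\phi_{\epsilon,b}+\psi_{\epsilon}$: one writes $u=u\chi+u\phi+u\psi$, the $\chi$-piece goes into the Gronwall quantity, the $\phi$-piece is compactly supported in the slab and hence controlled by the previous step's smoothing through Lemma~\ref{fraclemma2}, and the $\psi$-piece has support separated from $\supp\chi'$ so Lemma~\ref{supportseparetedJ} and Corollary~\ref{supportsepareted3} reduce it to the background $H^{s_\alpha^+}$ norm. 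These two devices---Ginibre--Velo for the commutator and the three-region partition coupled with the $D^s\leftrightarrow J^s$ transfer lemma for localization---are the technical core that fills the gaps you explicitly left open.
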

The result in Theorem \ref{mainTheo} is relevant to predict the behavior of the  flow solution in terms of regularity  just by  knowing   how regular the initial data is on a particular  class of subsets of the plane. More precisely,  for $\epsilon \geq 0$ and $v\geq 0,$ we set the family of half-spaces 
\begin{equation*}
\mathfrak{H}_{\epsilon}(t):=\left\{(x,y)\in \mathbb{R}^{2}\,|\, x\geq x_{0}+\epsilon-vt\right\},\quad \, t\geq 0.
\end{equation*}
The first term on the r.h.s of \eqref{e1.1} describes the following behavior:  The regularity in the $x-$direction of $u_{0}$ in the half space $\mathfrak{H}_{\epsilon}(0),$ that is,  $J^{s}_{x}u_{0}\in L^{2}\left(\mathfrak{H}_{\epsilon}(0)\right)$  is propagated with infinite speed to the left by the flow solution. 

Furthermore, the second term on the r.h.s of \eqref{e1.1} describes the extra regularity obtained in a particular class of subset of the plane, this phenomenon can be better understood by defining a new class of subsets as we did previously. More precisely, for  $\epsilon>0$ and $\tau\geq 5\epsilon,$ we define the channel
\begin{equation}
\mathcal{Q}(t):=\left\{(x,y)\in\mathbb{R}^{2}\,|\, x_{0}-\epsilon+vt <x<x_{0}+\tau-vt \right\}\quad \mbox{for all}\quad t\geq 0.
\end{equation} 
In this  setting,  the second term  on the r.h.s of \eqref{e1.1} describes the  smoothing effect of the solution  in the channel $ \mathcal{Q}(t),$ for all $t>0.$ Unlike the studied for solutions of the ZK equations (cf. \cite{ArgenisZK}), in our case, the solution enjoys of some ``anisotropic smoothing effect" it  means that  $u$ becomes smoother by one derivative in the $y-$variable when we restrict to $\mathcal{Q}(t)$ for $t>0.$ Instead, in the $x-$variable a  ``weaker"  smoothing occurs since there is only a gain of $\frac{\alpha+1}{2}$ derivatives prescribed in the channel  $\mathcal{Q}(t)$ for $t>0$. In geometrical terms, the above dynamic can be summarized in Figure \ref{fig:P1} below.
\begin{figure}
\resizebox{.65\textwidth}{!}{
\begin{tikzpicture}
 \draw[->] (0,0,0) -- (4.5,0,0) node[right] {$x$};
 \draw[dashed,-] (0,0,0) -- (-1.5,0,0);
 \draw[->] (0,0,0) -- (0,2.2,0) node[above] {};
 \draw[->] (0,0,0) -- (0,0,4.2) node[below] {$y$};
 \draw[dashed,-] (0,0,0) -- (0,0,-1);
 \draw[scale=0.7,domain=1.4:7.3,smooth,variable=\x,red,name path=A] plot ({\x}, 
 {(1.3*exp(2.5*(\x-4)))/(exp(2.5*(\x-4))+exp(-2.5*(\x-4)))},5);
  \draw[scale=0.7,domain=1.4:7.3,smooth,variable=\x,red,name path=B] plot ({\x}, 
 {(1.3*exp(2.5*(\x-4)))/(exp(2.5*(\x-4))+exp(-2.5*(\x-4)))},-3);
\tikzfillbetween[of=A and B]{red, opacity=0.2};
  \node[rotate=45,text width=3cm] at (2.2,0,-2)
    {\tiny $\mathbf{x_0+\epsilon-vt}$ };
\draw[dashdotted,green,-,name path=C] (2.4,0,3.3)--(2.4,0,-2.17);
\node[rotate=45,text width=3cm] at (3.8,0,-1.8)
    {\tiny $\mathbf{x_0+\tau-vt}$ };
\draw[dashdotted,green,-,name path=D] (3.6,0,3.3)--(3.6,0,-2.17);
\tikzfillbetween[of=C and D]{pattern=horizontal lines};
    \draw[<-,line width=0.3mm] (-0.5,0,-5.7)--(2.5,0,-5.7); 
  \draw (0.8,0.4,-6) node[below] {\small  Travelling};
 \draw[scale=0.7,domain=-1:4,smooth,variable=\x,blue,name path=E] plot ({\x}, 
 {(1.3*exp(2.5*(\x-2)))/(exp(2.5*(\x-2))+exp(-2.5*(\x-2)))},5);
  \draw[scale=0.7,domain=-1:4
  ,smooth,variable=\x,blue,name path=F] plot ({\x}, 
 {(1.3*exp(2.5*(\x-2)))/(exp(2.5*(\x-2))+exp(-2.5*(\x-2)))},-3);
\tikzfillbetween[of=E and F]{blue, opacity=0.2};
\draw (0.55,-0.03,0) node[below] {\tiny $\mathbf{x_0}$};
\draw[dashdotted,blue,-] (1.02,0,3.3)--(1.02,0,-2.17);
\end{tikzpicture}}
\caption{Propagation of regularity from $\mathfrak{H}_{\epsilon}(0)$ to $\mathfrak{H}_{\epsilon}(t)$.} \label{fig:P1}
\end{figure}
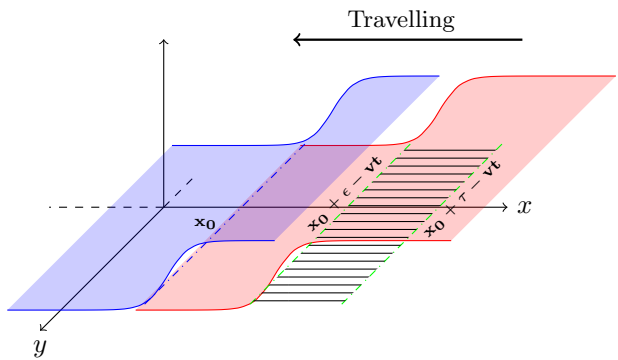

Additionally,  it is worth emphasizing several   issues  that   do not fall under the scope of Theorem \ref{mainTheo}. In comparison with our conclusions,  we notice that  for  the case of the  ZK equation ($\alpha=1$), their  solutions  propagate regularity  in both variables in a wider class of subsets of the plane (cf. \cite{ArgenisZK}). Certainly, this  contrast regarding the behavior with \eqref{BOZK} could be attributed to the differences in the nature of the fractional operator involved in the dispersion in \eqref{BOZK}, which is non-local  and tend to spread out all the information.  We think that  describing  the full behavior in both variables  requires an analysis that goes beyond  the methods employed in this paper, and it would require new tools to handle the interaction between  the operators $J^{s}$ (in the full variables) and $D_{x}^{\alpha+1}$.

The proof of Theorem \ref{mainTheo} follows in spirit the techniques and arguments presented in \cite{IsazLP,2019P1Argenis,2019P2Argenis,ArgenisZK} regarding propagation of local derivatives, and the conclusions in \cite{keLinaPOnV} for the fractional setting. However, in the case of \eqref{BOZK}, we face several additional difficulties expected from the interaction between the dispersion $\partial_xD^{\alpha+1}_x$ and the operator $J^s_x$. Among them, we require to deduce new localization formulas (see Lemma \ref{fraclemma2} below) relating the propagation on certain domains between homogeneous and non-homogeneous derivatives. This analysis is provided by studying the kernel determined by the difference $J^s_{x}-D^s_{x}$ as well as examining some class of pseudo-differential operators. In fact, we believe that these localization formulas are of independent interest and could certainly be applied to a wider class of equations in arbitrary spatial dimension. 
\begin{remark}
In the case of physical relevance  $\alpha=0$ in \eqref{BOZK}, i.e., the BOZK equation,  Theorem \ref{mainTheo} leads to an extension to the fractional setting of the conclusions derived in \cite[Theorem 1.4]{ANPhD} concerning the propagation of regularity principle for local derivatives. 	
\end{remark}
\begin{remark}
The method of  proof of Theorem \ref{mainTheo} applies for a given initial data $u_0\in H^{r}(\mathbb{R}^2)$ with arbitrary regularity $r>0$ provided that one can assure the existence of a corresponding solution $u\in C([0,T],H^r(\mathbb{R}^2))$ of \eqref{BOZK} such that
\begin{equation*}
u, \partial_x u \in L^1([0,T];L^{\infty}(\mathbb{R}^2)).
\end{equation*}
In particular, in the case of the BOZK equation, $\alpha=0$ in \eqref{BOZK}, the LWP result in \cite[Theorem 1.3]{ANPhD} determines the validity of Theorem \ref{mainTheo} for initial data with regularity $H^r(\mathbb{R}^2)$, $r>5/4$.
\end{remark}
\begin{remark}
We believe that our results can be adapted to study the  propagation of regularity  principle for other two-dimensional models involving non-local operators. For instance, we expect to obtain similar conclusion to that of Theorem \ref{mainTheo} for solutions of the Cauchy problem associated to the fractional Kadomtsev Petviashvili-equations (KP) type equation
    \begin{equation*}
    \begin{aligned}
    &\partial_t u -\partial_xD^{\alpha}_x u+\kappa \partial_{x}^{-1}\partial_y^2u+uu_x =0,\quad (x,y,t)\in \mathbb{R}^{3},
    \end{aligned}
\end{equation*}
where $0<\alpha\leq 2$, see \cite{FKP}. 
Additionally, we expect that our considerations may work as an initial step to obtain fractional propagation of regularity for the IVP associated to the Shira equation  
\begin{equation*}
  \partial_t u -\mathcal{H}_x\partial_x^2u-\mathcal{H}_x\partial_y^2u+u\partial_{x} u=0,\hskip 15pt (x,y,t)\in \mathbb{R}^3.
\end{equation*}
This equation was deduced as a simplified model to describe a two-dimensional weakly nonlinear long-wave perturbation on the background of a boundary-layer type plane-parallel shear flow (see \cite{PShrira}). For some references dealing with LWP issues see \cite{Paisas1,OscarCapila}.
\end{remark} 
The document is organized as follows: we first introduce the notation to be employed through our analysis. Section \ref{preliminar} is aimed to present some commutator estimates, pseudo-differential operators as well as  results involving localized regularity. In this section, we  also introduce the weighted functions required to apply energy estimates. Next, in Section \ref{Katosect}, we turn our attention  to the deduction of Kato's smoothing effect, which provides a set up in the study of the propagation of regularity. Section \ref{mainTheosection} is intended to deduce our main result: Theorem \ref{mainTheo}. Finally,  we close with an appendix aimed to provide the local well-posedness result stated in Theorem \ref{imprwellpos}.

\section{Notation}
Given two positive quantities $a$ and $b$,  $a\lesssim b$ means that there exists $c>0$ such that $a\leq c b$. $[A,B]$ denotes the \emph{commutator} between the operators $A$ and $B$, that is
$$[A,B]=AB-BA.$$
We shall employ the standard multi-index notation, $\gamma=(\gamma_1,\dots,\gamma_d) \in \mathbb{N}^d$, $\partial^{\gamma}=\partial^{\gamma_1}_{x_1}\cdots \partial_{x_d}^{\gamma_d}$, $|\gamma|=\sum_{j=1}^d \gamma_j$, and $\gamma!=\gamma_1 ! \cdots \gamma_d !$.

Given $p\in [1,\infty]$ and $d\geq 1$ integer, $L^p(\mathbb{K})$ or simply $L^p$ denotes the usual \emph{Lebesgue space}, where the set $\mathbb{K}$ will be easily deduced in each context. To emphasize the dependence on the variables when $d=2$, we will denote by $\|f\|_{L^p(\mathbb{R}^2)}=\|f\|_{L^p_{xy}(\mathbb{R}^2)}$.  We denote by $C_c^{\infty}(\mathbb{R}^d)$ the spaces of \emph{smooth functions with compact support} and $S(\mathbb{R}^d)$ the space of \emph{Schwarz functions}.

The operators $D^s=(-\Delta)^{s/2}$ and $J^s=(I-\Delta)^{s/2}$ denote the \emph{Riesz and Bessel potentials} of order $-s$, respectively. As defined above, $D_x^sf$ and $D_y^sf$ denote the operators
\begin{equation*}
    \widehat{D_x^s f}(\xi,\eta)=|\xi|^s\widehat{f}(\xi,\eta) \text{ and } \widehat{D_y^s f}(\xi,\eta)=|\eta|^s\widehat{f}(\xi,\eta).
\end{equation*}
Analogously, $J_x^sf$ and $J_y^sf$ are determined by
\begin{equation*}
    \widehat{J_x^s f}(\xi,\eta)=(1+|\xi|^2)^{s/2}\widehat{f}(\xi,\eta) \text{ and } \widehat{J_y^s f}(\xi,\eta)=(1+|\eta|^2)^{s/2}\widehat{f}(\xi,\eta).
\end{equation*}
Given $s\in \mathbb{R}$, $H^s(\mathbb{R}^d)$ denotes the $L^2$-based Sobolev space with norm $\|f\|_{H^s} =\|J^s f\|_{L^2}$. If $B$ is a space of functions on $\mathbb{R}$, $T>0$ and $1\leq p \leq \infty$, we define the spaces $L^p_TB$ and $L^p_tB$ by the norms 
\begin{equation*}
 \|f\|_{L^p_T B} =\|\|f(\cdot,t)\|_{B}\|_{L^p([0,T])} \, \text{ and } \,    \|f\|_{L^p_tB} = \|\|f(\cdot,t)\|_{B}\|_{L^p(\mathbb{R})}.
\end{equation*}


\section{Preliminaries}\label{preliminar}

This section is aimed to present the preliminaries and initial considerations required to develop our arguments.

\subsection{Commutator estimates}

To obtain estimates for the nonlinear terms, the following results will be implemented along our considerations.
\begin{lemma}\label{conmKP}
If $s>0$ and $1<p<\infty$, then
\begin{equation}\label{conmKPI}
    \left\|[J^s,f]g\right\|_{L^p(\mathbb{R}^d)} \lesssim  \left\|\nabla f\right\|_{L^{\infty}(\mathbb{R}^d)} \left\|J^{s-1}g\right\|_{L^p(\mathbb{R}^d)}+ \left\|J^s f\right\|_{L^p(\mathbb{R}^d)} \left\|g\right\|_{L^{\infty}(\mathbb{R}^d)}.
\end{equation}
\end{lemma}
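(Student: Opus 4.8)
The estimate \eqref{conmKPI} is the classical Kato--Ponce commutator inequality, so the plan is to reduce it to a Littlewood--Paley/paraproduct analysis. Write $\langle\xi\rangle:=(1+|\xi|^2)^{1/2}$, so that the symbol of $J^s$ is $\langle\xi\rangle^s$. Let $\{P_k\}_{k\geq 0}$ be a standard inhomogeneous Littlewood--Paley decomposition, with $P_k$ localizing to frequencies $|\xi|\sim 2^k$ (and $P_0$ to $|\xi|\lesssim 1$), and set $P_{\leq k}=\sum_{j\leq k}P_j$. Using Bony's paraproduct decomposition I would split the product $fg$, and hence the commutator $[J^s,f]g=J^s(fg)-fJ^sg$, into three contributions according to the relative sizes of the frequencies of the two factors: the piece where $f$ is at lower frequency than $g$, the piece where $g$ is at lower frequency than $f$, and the diagonal (resonant) piece where both frequencies are comparable.

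For the main piece, where $f$ oscillates slowly compared with $g$, the model term is $[J^s,P_{\leq k-2}f]\,P_k g$. Here I would exploit the smoothness of the symbol $\langle\xi\rangle^s$: writing $\zeta$ for the small frequency carried by $f$ and $\xi$ for the large frequency carried by $g$, the mean value theorem gives
\begin{equation*}
\langle\xi+\zeta\rangle^s-\langle\xi\rangle^s=\zeta\cdot\int_0^1\nabla_\xi\langle\xi+\theta\zeta\rangle^s\,d\theta,\qquad |\nabla_\xi\langle\xi\rangle^s|\lesssim\langle\xi\rangle^{s-1}.
\end{equation*}
The factor $\zeta$ corresponds to one derivative falling on $f$, while $\langle\xi\rangle^{s-1}\sim 2^{(s-1)k}$ corresponds to $J^{s-1}$ acting on $g$. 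This yields the uniform Littlewood--Paley bound $\|[J^s,P_{\leq k-2}f]\,P_k g\|_{L^p}\lesssim\|\nabla f\|_{L^\infty}\|J^{s-1}P_k g\|_{L^p}$, and summing over $k$ via the Littlewood--Paley square function together with the Fefferman--Stein vector-valued maximal inequality produces the first term $\|\nabla f\|_{L^\infty}\|J^{s-1}g\|_{L^p}$.

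The remaining two pieces are simpler. When $g$ is at lower frequency than $f$, the subtracted term $fJ^sg$ is harmless while $J^s$ in $J^s(fg)$ acts essentially on the high-frequency factor $f$, with $g$ behaving like a bounded multiplier; Hölder's inequality together with the boundedness of the square function then gives the bound $\|J^s f\|_{L^p}\|g\|_{L^\infty}$. The diagonal piece, where the two frequencies are comparable, can be absorbed into either of the two terms after redistributing the derivatives.

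The main obstacle is making the first step fully rigorous. The mean-value expansion of the symbol turns the commutator into a bilinear Fourier multiplier, and one must verify that its symbol satisfies Coifman--Meyer type estimates (equivalently, establish a kernel bound for the commutator) so that the operator is bounded on $L^p$ over the entire range $1<p<\infty$; controlling the $\theta$-integral uniformly and reassembling the dyadic pieces with the correct powers of $2^k$ is where the technical effort concentrates. Alternatively, since the statement coincides with the classical formulation, one may simply invoke the Kato--Ponce commutator estimate directly.
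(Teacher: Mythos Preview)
The paper does not actually prove this lemma; immediately after stating it, the authors simply write that it ``was proved initially by Kato and Ponce'' and give references. So there is no proof in the paper to compare your attempt against.

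Your sketch via Bony paraproducts and the mean-value expansion of the symbol $\langle\xi\rangle^s$ is one of the standard modern routes to Kato--Ponce type estimates, and the heuristics you describe for each of the three frequency regimes are the right ones. You also correctly isolate the genuine technical point (turning the symbol expansion into a bona fide Coifman--Meyer bound or kernel estimate and reassembling the dyadic pieces). As written this is a proof outline rather than a proof, but since the paper itself only cites the result, your closing remark that one may ``simply invoke the Kato--Ponce commutator estimate directly'' is exactly what the authors do.
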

Lemma \ref{conmKP} was proved initially  by Kato and Ponce in \cite{KP}. See also \cite{Smoothcomm} and the references therein.

Additionally, we recall the following commutator relation for non-homogeneous derivatives
 \begin{lemma} \label{nonhest}
Let $s>0$, $1<p<\infty$ and $1<p_1,p_2,p_3,p_4 \leq \infty$ satisfying
\begin{equation*}
    \frac{1}{p}=\frac{1}{p_1}+\frac{1}{p_2}=\frac{1}{p_3}+\frac{1}{p_4}.
\end{equation*}
Then,
\begin{itemize}
    \item[(i)] if $0<s\leq 1$,
    \begin{equation*}
        \left\|[D^s,f]g\right\|_{L^p(\mathbb{R}^d)} \lesssim \left\|D^{s-1}\nabla f\right\|_{L^{p_1}(\mathbb{R}^d)}\left\|g\right\|_{L^{p_2}(\mathbb{R}^d)}.
    \end{equation*}
    \item[(ii)] If $s>1$, then 
    \begin{equation*}
        \left\|[D^s,f]g\right\|_{L^p(\mathbb{R}^d)} \lesssim \left\|D^{s-1}\nabla f\right\|_{L^{p_1}(\mathbb{R}^d)}\left\|g\right\|_{L^{p_2}(\mathbb{R}^d)}+\left\|\nabla f\right\|_{L^{p_3}(\mathbb{R}^d)}\left\|D^{s-1}g\right\|_{L^{p_4}(\mathbb{R}^d)}.
    \end{equation*}
\end{itemize}
\end{lemma}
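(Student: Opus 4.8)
The plan is to prove both estimates by passing to the bilinear Fourier multiplier representation of the commutator and splitting the frequency interactions with a Littlewood--Paley paraproduct. Writing $\xi$ for the frequency of $f$ and $\eta$ for that of $g$, one has
\begin{equation*}
[D^s,f]g(x)=\iint \big(|\xi+\eta|^s-|\eta|^s\big)\,\widehat{f}(\xi)\,\widehat{g}(\eta)\,e^{ix\cdot(\xi+\eta)}\,d\xi\,d\eta,
\end{equation*}
so that everything reduces to analyzing the symbol $m(\xi,\eta)=|\xi+\eta|^s-|\eta|^s$ in the three regions $|\xi|\ll|\eta|$ (low--high), $|\xi|\gg|\eta|$ (high--low) and $|\xi|\sim|\eta|$ (diagonal). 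In each region I would factor out of $m$ the Fourier symbols of the operators appearing on the right-hand side and check that the leftover factor is homogeneous of degree $0$ and smooth off the origin, so that the Coifman--Meyer bilinear multiplier theorem (together with its endpoint refinements allowing $p_i=\infty$) gives the boundedness $L^{p_1}\times L^{p_2}\to L^p$ under the stated H\"older relation.

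For part (ii), where $s>1$, all three regions contribute. In the high--low region $|\xi+\eta|\sim|\xi|$, so $m$ is comparable to $|\xi|^s$; dividing by the symbol $|\xi|^{s-1}(i\xi)$ of $D^{s-1}\nabla f$ leaves a smooth degree-zero factor and produces the first term $\|D^{s-1}\nabla f\|_{L^{p_1}}\|g\|_{L^{p_2}}$. In the low--high region the Taylor expansion $m(\xi,\eta)=s|\eta|^{s-2}(\eta\cdot\xi)+O(|\xi|^2|\eta|^{s-2})$ shows that one derivative is forced onto $f$ and $s-1$ derivatives onto $g$; extracting $i\xi$ (for $\nabla f$) and $|\eta|^{s-1}$ (for $D^{s-1}g$) leaves a Coifman--Meyer symbol and gives the second term. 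The diagonal region is harmless and can be absorbed into the first term.

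For part (i), with $0<s\le1$, the second term must disappear, and this cancellation is invisible to the naive paraproduct, which would still generate a low--high contribution of the shape $\|\nabla f\|\,\|D^{s-1}g\|$. I would therefore argue instead through the real-variable representation of the commutator: subtracting the singular-integral formulas for $D^s(fg)$ and $fD^sg$ (valid for $0<s<2$) yields the pointwise identity
\begin{equation*}
[D^s,f]g(x)=c_{d,s}\,\mathrm{p.v.}\!\int_{\mathbb{R}^d}\frac{\big(f(x)-f(y)\big)\,g(y)}{|x-y|^{d+s}}\,dy .
\end{equation*}
Writing $f(x)-f(y)=\int_0^1\nabla f\big(y+\theta(x-y)\big)\cdot(x-y)\,d\theta$ turns the kernel into $|x-y|^{-(d+s-1)}=|x-y|^{-(d-(1-s))}$, which for $s<1$ is precisely the locally integrable kernel of the Riesz potential $I_{1-s}$ attached to $D^{s-1}$. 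Controlling the difference quotient of $f$ by a (fractional) maximal function of $\nabla f$ and invoking the Hardy--Littlewood--Sobolev inequality then bounds the full expression by $\|D^{s-1}\nabla f\|_{L^{p_1}}\|g\|_{L^{p_2}}$; at the endpoint $s=1$ the kernel degenerates to $|x-y|^{-d}$ and the operator becomes the first Calder\'on commutator, dispatched by Calder\'on--Zygmund theory.

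The main obstacle is the low--high interaction, and it is exactly there that the contrast between the two parts lives. For $s>1$ this region genuinely requires the second term, and the technical work is verifying that the reduced symbol satisfies the Coifman--Meyer derivative bounds uniformly across dyadic scales; for $s\le1$ one must instead keep all the regularity on $f$, which is only transparent through the kernel identity above and the local integrability afforded by $s\le1$. The one computation needing care is the moving evaluation point $y+\theta(x-y)$ in the difference quotient, which I would handle by a standard vector-valued maximal-function estimate.
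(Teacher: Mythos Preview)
The paper does not prove this lemma at all; it simply cites it from D.~Li \cite[Corollary 5.3]{Dli}, so there is no in-paper argument to compare against. Your paraproduct/Coifman--Meyer outline for part~(ii) is the standard route and essentially what Li does, including the observation that the $p_i=\infty$ endpoints need the refined bilinear theory rather than the classical Coifman--Meyer theorem.

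For part~(i), however, there is a genuine gap. Once you apply the mean-value identity and reduce the kernel to $|x-y|^{-(d+s-1)}$, the Riesz potential $I_{1-s}$ naturally acts on $g$ (or on the product $M(\nabla f)\,|g|$), not on $\nabla f$. The desired right-hand side is $\|D^{s-1}\nabla f\|_{L^{p_1}}\|g\|_{L^{p_2}}\sim\|D^sf\|_{L^{p_1}}\|g\|_{L^{p_2}}$, so you need the fractional integration to land on $f$. Your phrase ``controlling the difference quotient of $f$ by a fractional maximal function of $\nabla f$ and invoking HLS'' does not deliver this: the pointwise bound $|f(x)-f(y)|\lesssim|x-y|\big(M\nabla f(x)+M\nabla f(y)\big)$ leaves the $I_{1-s}$-kernel acting on $g$, and HLS on $g$ gives $\|I_{1-s}g\|_{L^{p_2}}$, which is \emph{larger} than $\|g\|_{L^{p_2}}$, not smaller. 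The relevant substitute would be a fractional Haj\l asz-type inequality $|f(x)-f(y)|\lesssim|x-y|^s\big(h(x)+h(y)\big)$ with $\|h\|_{L^{p_1}}\sim\|D^sf\|_{L^{p_1}}$, but then the remaining kernel is $|x-y|^{-d}$, which is not locally integrable, so the argument still does not close without a further cancellation mechanism.

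In fact, your claim that ``this cancellation is invisible to the naive paraproduct'' is not quite right. In the low--high region $|\xi|\ll|\eta|$ one has $\big||\xi+\eta|^s-|\eta|^s\big|\sim|\xi|\,|\eta|^{s-1}$, so after extracting $|\xi|^s$ the remaining factor is of size $(|\xi|/|\eta|)^{1-s}\le1$ precisely because $s\le1$. This extra decay is exactly what allows the full $s$ derivatives to be placed on $f$ in that region; Li's proof exploits this through the Littlewood--Paley decomposition rather than a kernel identity. Staying with the paraproduct for both parts, and using this decay to sum the low--high piece against $D^sf$ when $s\le1$, gives a cleaner and complete argument.
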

The above estimates were deduced by D. Li in \cite[Corollary 5.3]{Dli}.
\begin{lemma}\label{fraLR}
Given  $s>0.$ Let     $1<p_1,p_2,q_1,q_2  \leq \infty$  
with $$\frac{1}{p_j}+\frac{1}{q_j}=\frac{1}{2}.$$
Then,
\begin{equation}
\begin{split}
  \left\|D^s(fg)\right\|_{L^2(\mathbb{R}^d)} &\lesssim  \left\|D^s f\right\|_{L^{p_1}(\mathbb{R}^d)} \left\|g\right\|_{L^{q_1}(\mathbb{R}^d)}+ \left\|f\right\|_{L^{p_2}(\mathbb{R}^d)} \left\|D^sg\right\|_{L^{q_2}(\mathbb{R}^d)} \label{libRule} \\
\end{split}
    \end{equation}
    and 
    \begin{equation}
    \begin{split}
     \left\|J^s(fg)\right\|_{L^2(\mathbb{R}^d)} & \lesssim  \left\|J^s f\right\|_{L^{p_1}(\mathbb{R}^d)} \left\|g\right\|_{L^{q_1}(\mathbb{R}^d)}+ \left\|f\right\|_{L^{p_2}(\mathbb{R}^d)} \left\|J^s g\right\|_{L^{q_2}(\mathbb{R}^d)}. \label{eqfraLR}
    \end{split}
    \end{equation}    
\end{lemma}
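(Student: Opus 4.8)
The plan is to prove the homogeneous bound \eqref{libRule} by a Littlewood--Paley paraproduct decomposition and then obtain the inhomogeneous bound \eqref{eqfraLR} by the same scheme. Let $\{P_j\}_{j\in\Z}$ be the homogeneous frequency projections onto $|\xi|\sim 2^{j}$, set $P_{<j}=\sum_{k\le j-3}P_k$, and split
\begin{equation*}
fg=\sum_{j}P_jf\,P_{<j}g+\sum_{j}P_{<j}f\,P_jg+\sum_{|j-k|\le 2}P_jf\,P_kg=:\Pi_1+\Pi_2+\Pi_3.
\end{equation*}
The summands of $\Pi_1$ and $\Pi_2$ are frequency--localized at $|\xi|\sim 2^{j}$, so $D^s$ behaves like multiplication by $2^{js}$ on the high--frequency factor; in $\Pi_3$ the output frequency is only bounded above by $\sim 2^{j}$, and this is precisely where the hypothesis $s>0$ is used.

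For the high--low piece $\Pi_1$ I would estimate, using almost orthogonality and $D^s\sim 2^{js}$ on each block,
\begin{equation*}
\|D^s\Pi_1\|_{L^2}\lesssim\Big\|\big(\sum_{j}2^{2js}|P_jf|^2\big)^{1/2}\,\mathcal Mg\Big\|_{L^2}\lesssim\big\|\big(\sum_{j}2^{2js}|P_jf|^2\big)^{1/2}\big\|_{L^{p_1}}\,\|\mathcal Mg\|_{L^{q_1}},
\end{equation*}
where $\mathcal M$ is the Hardy--Littlewood maximal operator, which dominates $\sup_j|P_{<j}g|$. The square--function characterization gives $\|(\sum_j 2^{2js}|P_jf|^2)^{1/2}\|_{L^{p_1}}\sim\|D^sf\|_{L^{p_1}}$ (valid for $1<p_1<\infty$), the Hardy--Littlewood and Fefferman--Stein inequalities give $\|\mathcal Mg\|_{L^{q_1}}\lesssim\|g\|_{L^{q_1}}$ (for $1<q_1<\infty$), and since $1/p_1+1/q_1=1/2$, Hölder's inequality closes the bound by the first summand of \eqref{libRule}. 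The term $\Pi_2$ is symmetric and yields the second summand. For the high--high piece $\Pi_3$, note that $P_m(P_jf\,P_kg)$ vanishes unless $m\le j+O(1)$; writing $2^{ms}=2^{js}2^{(m-j)s}$ and summing the geometric factor over $m\le j$ converges exactly because $s>0$, which reduces $\Pi_3$ to the same bilinear estimate and feeds again into the first summand.

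To pass to \eqref{eqfraLR} I would rerun the decomposition with the inhomogeneous projections $\{P_j\}_{j\ge 0}$, replacing $2^{js}$ by $(1+2^{2j})^{s/2}\sim 2^{js}$ for $j\ge 1$ and $\sim 1$ for $j=0$; the high--low, low--high and high--high estimates are formally identical, while the genuinely low--frequency block (the terms with $j=O(1)$) is controlled directly by Hölder's inequality, $\|fg\|_{L^2}\le\|f\|_{L^{p_2}}\|g\|_{L^{q_2}}$, together with the $L^{q_2}$--boundedness of the Bessel potential $J^{-s}$ (whose kernel is integrable), which gives $\|g\|_{L^{q_2}}\lesssim\|J^sg\|_{L^{q_2}}$ and hence the second summand of \eqref{eqfraLR}.

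The delicate point, and the step I expect to be the main obstacle, is the behaviour at the endpoint exponents: the constraint $1/p_j+1/q_j=1/2$ forces any factor placed in $L^\infty$ to be paired with an $L^2$ partner, and when the derivative--bearing factor itself sits in $L^\infty$ (e.g.\ $p_1=\infty$, $q_1=2$ in $\Pi_1$) the square--function characterization of $\|D^sf\|_{L^\infty}$ is no longer available, since the argument above would only control the larger Besov norm $\dot B^{s}_{\infty,2}$. Establishing the inequality up to and including these endpoints requires the refined analysis of the Kato--Ponce inequality due to Grafakos and Oh and to Bourgain and Li, which I would invoke rather than reprove; for all interior exponents $1<p_j,q_j<\infty$ the paraproduct argument above is routine.
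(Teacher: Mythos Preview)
Your paraproduct sketch is the standard Coifman--Meyer argument and is correct for all interior exponents $1<p_j,q_j<\infty$; you also correctly isolate the genuine endpoint obstruction (when the derivative factor sits in $L^\infty$ the square--function identification fails and one must appeal to the Grafakos--Oh or Bourgain--Li analysis). The paper, however, does not prove this lemma at all: it simply records the statement and refers the reader to the literature (the citation \texttt{\textbackslash cite\{FRAC\}}). So there is nothing to compare beyond noting that your outline is precisely the skeleton of the proof one finds in those references, and your decision to invoke rather than reprove the endpoint case is exactly what the paper itself does by outsourcing the entire lemma.
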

The proof of the above estimates  can be   consulted in \cite{FRAC}.
 \begin{lemma}\label{lem1}
  Let $\phi\in  C^{\infty}(\mathbb{R})$  with  $\phi'\in C_{0}^{\infty}(\mathbb{R})$. If $s\in \mathbb{R}$, then for any
 	$l>|s-1|+1/2$
\begin{equation}
\|[J^{s}, \phi] f\|_{L^{2}(\mathbb{R})}+\|[J^{s-1},\phi]\partial_{x}f\|_{L^{2}(\mathbb{R})}\leq c\|J^{l}\phi'\|_{L^{2}(\mathbb{R})}\|J^{s-1}f\|_{L^{2}(\mathbb{R})}.
\end{equation}
 \end{lemma}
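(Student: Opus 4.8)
The plan is to reduce both commutators to a single convolution estimate on the Fourier side, and to invoke the hypothesis $l>|s-1|+1/2$ only at the very last step, through Cauchy--Schwarz. Write $\langle\xi\rangle=(1+\xi^2)^{1/2}$, so that $J^s$ has symbol $\langle\xi\rangle^s$. Taking Fourier transforms, the two commutators become integral operators whose kernels are, respectively,
\[
m_1(\xi,\zeta)=\big(\langle\xi\rangle^s-\langle\zeta\rangle^s\big)\widehat\phi(\xi-\zeta),\qquad m_2(\xi,\zeta)=\big(\langle\xi\rangle^{s-1}-\langle\zeta\rangle^{s-1}\big)\,i\zeta\,\widehat\phi(\xi-\zeta),
\]
so that $\widehat{[J^s,\phi]f}(\xi)=\int m_1(\xi,\zeta)\widehat f(\zeta)\,d\zeta$ and $\widehat{[J^{s-1},\phi]\partial_x f}(\xi)=\int m_2(\xi,\zeta)\widehat f(\zeta)\,d\zeta$, using $\widehat{\partial_x f}(\zeta)=i\zeta\widehat f(\zeta)$. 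Since $\phi$ need not be Schwartz (it is merely constant outside a compact set), $\widehat\phi$ carries a singular part at the origin; the point is that in both kernels the difference of symbols vanishes at $\xi=\zeta$, and I will organize the estimate so that the factor $\xi-\zeta$ is always paired with $\widehat\phi$ to form $(\xi-\zeta)\widehat\phi(\xi-\zeta)=-i\,\widehat{\phi'}(\xi-\zeta)$, a genuine Schwartz function. This is the mechanism by which $\phi'$, and not $\phi$, appears on the right-hand side.

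The heart of the matter is the pointwise kernel bound
\[
|m_1(\xi,\zeta)|+|m_2(\xi,\zeta)|\lesssim \langle\xi-\zeta\rangle^{|s-1|}\,\big|\widehat{\phi'}(\xi-\zeta)\big|\;\langle\zeta\rangle^{s-1},
\]
which I would prove by splitting into two regions according to the size of $\eta:=\xi-\zeta$ relative to $\langle\zeta\rangle$. When $|\eta|\le\tfrac12\langle\zeta\rangle$ one has $\langle\xi\rangle\sim\langle\zeta\rangle$, and the mean value theorem applied to $t\mapsto\langle t\rangle^{s}$ and $t\mapsto\langle t\rangle^{s-1}$ gives $|\langle\xi\rangle^s-\langle\zeta\rangle^s|\lesssim|\eta|\langle\zeta\rangle^{s-1}$ and $|\langle\xi\rangle^{s-1}-\langle\zeta\rangle^{s-1}|\,|\zeta|\lesssim|\eta|\langle\zeta\rangle^{s-2}\langle\zeta\rangle=|\eta|\langle\zeta\rangle^{s-1}$; pairing $|\eta|$ with $\widehat\phi$ yields the claim, with room to spare since $\langle\eta\rangle^{|s-1|}\ge1$. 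When $|\eta|>\tfrac12\langle\zeta\rangle$ one has $\langle\zeta\rangle,\langle\xi\rangle\lesssim\langle\eta\rangle\sim|\eta|$, and estimating the symbol difference crudely by $\langle\xi\rangle^{\mu}+\langle\zeta\rangle^{\mu}$ (with $\mu=s$ or $\mu=s-1$) together with $|\zeta|\le\langle\zeta\rangle$ reduces everything to elementary comparisons of powers of $\langle\eta\rangle$ and $\langle\zeta\rangle$, which hold because $\langle\zeta\rangle\lesssim\langle\eta\rangle$. I expect this region to be the main obstacle: the naive Peetre inequality with base point $\zeta$ produces the power $\langle\eta\rangle^{|s-2|}$ for $m_2$, which is too large when $s<3/2$; absorbing the factor $\zeta$ coming from $\partial_x$ into $\langle\zeta\rangle^{s-1}$ before comparing powers is exactly what keeps the exponent at $|s-1|$ and makes both commutators obey the same bound.

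Granting the pointwise bound, set $K(\eta):=\langle\eta\rangle^{|s-1|}|\widehat{\phi'}(\eta)|$ and note $\langle\zeta\rangle^{s-1}|\widehat f(\zeta)|=|\widehat{J^{s-1}f}(\zeta)|$. Then both $|\widehat{[J^s,\phi]f}(\xi)|$ and $|\widehat{[J^{s-1},\phi]\partial_x f}(\xi)|$ are dominated by the convolution $\big(K*|\widehat{J^{s-1}f}|\big)(\xi)$, so by Young's inequality ($\|K*g\|_{L^2}\le\|K\|_{L^1}\|g\|_{L^2}$) and Plancherel,
\[
\|[J^s,\phi]f\|_{L^2}+\|[J^{s-1},\phi]\partial_x f\|_{L^2}\lesssim \|K\|_{L^1}\,\|J^{s-1}f\|_{L^2}.
\]
It remains to bound $\|K\|_{L^1}$. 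By Cauchy--Schwarz,
\[
\|K\|_{L^1}=\int \langle\eta\rangle^{|s-1|}\big|\widehat{\phi'}(\eta)\big|\,d\eta\le\Big(\int\langle\eta\rangle^{2(|s-1|-l)}\,d\eta\Big)^{1/2}\Big(\int\langle\eta\rangle^{2l}\big|\widehat{\phi'}(\eta)\big|^2\,d\eta\Big)^{1/2}=c_{s,l}\,\|J^l\phi'\|_{L^2},
\]
and the first factor $c_{s,l}$ is finite precisely when $2(|s-1|-l)<-1$, i.e. when $l>|s-1|+\tfrac12$, which is the hypothesis. Combining the last two displays gives the assertion.
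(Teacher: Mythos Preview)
The paper does not give its own proof of this lemma, deferring instead to \cite{keLinaPOnV}; your argument is correct and is essentially the one found there---a Fourier-side kernel bound via the mean value theorem on the near-diagonal region $|\eta|\le\tfrac12\langle\zeta\rangle$, crude power comparisons on the far-diagonal region, and then Young's inequality plus Cauchy--Schwarz to close. One small point worth making explicit: writing $\langle\xi\rangle^s-\langle\zeta\rangle^s=s\,\eta\int_0^1(\zeta+t\eta)\langle\zeta+t\eta\rangle^{s-2}\,dt$ exhibits the factor $\eta$ algebraically and lets you express both kernels directly in terms of the Schwartz function $\widehat{\phi'}$ from the outset, so that the distributional singularity of $\widehat\phi$ at the origin never actually enters the estimates.
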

The previous lemma was established by Kenig, Linares, Ponce and Vega in  \cite{keLinaPOnV}. 

We shall employ the following generalization of Calderon's first commutator estimate in the context of the Hilbert transform deduced in \cite[Lemma 3.1]{DawsonMCPON} (see also \cite[Proposition 3.8]{Dli}).
\begin{prop}\label{CalderonComGU}
Let $1<p<\infty$ and $l,m \in \mathbb{Z}^{+}\cup \{0\}$, $l+m\geq 1,$ then
\begin{equation}\label{Comwellprel1}
    \|\partial_x^l[\mathcal{H}_x,g]\partial_x^{m}f\|_{L^p(\mathbb{R})} \lesssim_{p,l,m} \|\partial_x^{l+m} g\|_{L^{\infty}(\mathbb{R})}\|f\|_{L^p(\mathbb{R})}.
\end{equation}
\end{prop}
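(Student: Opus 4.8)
The plan is to reduce everything to the classical first Calderón commutator estimate. Writing $M_g$ for multiplication by $g$ and using that $\partial_x$ commutes with $\mathcal{H}_x$, one obtains from a single application of the product rule the operator identity $\partial_x[\mathcal{H}_x,M_g]=[\mathcal{H}_x,M_{g'}]+[\mathcal{H}_x,M_g]\partial_x$. Iterating it $l$ times yields the Leibniz-type expansion
\[
\partial_x^l[\mathcal{H}_x,M_g]\partial_x^m=\sum_{i=0}^{l}\binom{l}{i}[\mathcal{H}_x,M_{g^{(i)}}]\,\partial_x^{m+l-i},
\]
so it suffices to estimate each term $[\mathcal{H}_x,a]\partial_x^{k}f$ with $a=g^{(i)}$ and $k=m+l-i$. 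Since $a^{(k)}=g^{(l+m)}$, the objective for every term becomes $\|[\mathcal{H}_x,a]\partial_x^{k}f\|_{L^p}\lesssim\|a^{(k)}\|_{L^\infty}\|f\|_{L^p}$, and summing these reproduces \eqref{Comwellprel1}.

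For the generic terms $k\ge 1$ I would start from the kernel representation $[\mathcal{H}_x,a]\phi(x)=\tfrac1\pi\,\mathrm{p.v.}\!\int\frac{a(z)-a(x)}{x-z}\phi(z)\,dz$ with $\phi=\partial_x^k f$, and integrate by parts $k$ times in $z$ to transfer the derivatives onto the kernel. The essential point is then to rewrite $\partial_z^k\big(\tfrac{a(z)-a(x)}{z-x}\big)$ so that only $a^{(k)}$, and not $a^{(k+1)}$, appears. Starting from $\tfrac{a(z)-a(x)}{z-x}=\int_0^1 a'(x+t(z-x))\,dt$, differentiating under the integral, and integrating by parts in the variable $t$, a direct computation gives
\[
\partial_z^k\Big(\frac{a(z)-a(x)}{z-x}\Big)=\frac{k}{z-x}\int_0^1 t^{k-1}\big[a^{(k)}(z)-a^{(k)}(x+t(z-x))\big]\,dt.
\]
Thus the operator is a superposition over $t\in(0,1)$ of first-Calderón-commutator kernels $\tfrac{a^{(k)}(z)-a^{(k)}(x+t(z-x))}{z-x}$ built linearly from the single function $a^{(k)}$. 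By the classical Calderón commutator theorem, in its scale-invariant form so that the bound is uniform in the dilation parameter $t$, each such operator is bounded on $L^p$, $1<p<\infty$, with norm $\lesssim\|a^{(k)}\|_{L^\infty}$; integrating against $k\,t^{k-1}\,dt$, which is finite precisely because $k\ge1$, yields $\|[\mathcal{H}_x,a]\partial_x^k f\|_{L^p}\lesssim\|a^{(k)}\|_{L^\infty}\|f\|_{L^p}$.

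The only term escaping this scheme is $k=0$, which occurs solely when $m=0$ and $i=l$; there the term is $[\mathcal{H}_x,g^{(l)}]f$ with $l\ge1$, whose $L^p$ boundedness with constant $\lesssim\|g^{(l)}\|_{\mathrm{BMO}}\lesssim\|g^{(l)}\|_{L^\infty}$ is the Coifman--Rochberg--Weiss commutator theorem together with the inclusion $L^\infty\subset \mathrm{BMO}$. Summing the finitely many contributions, in all of which $a^{(k)}=g^{(l+m)}$, closes the argument.

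I expect the main obstacle to be careful bookkeeping at the level of kernels rather than any deep new idea: one must justify the $k$-fold integration by parts together with the principal-value regularization (controlling the diagonal and decay contributions for Schwartz $f$, then extending by density), and one must verify that the $t$-parametrized family of commutator operators is bounded uniformly in $t$ with the correct derivative count. The integration by parts in $t$ is the crucial maneuver: it is exactly what trades the spurious $\|a^{(k+1)}\|_{L^\infty}$ produced by naive differentiation under the integral sign for the sharp $\|a^{(k)}\|_{L^\infty}=\|\partial_x^{l+m}g\|_{L^\infty}$ demanded by the statement.
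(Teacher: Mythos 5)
Your algebraic skeleton is correct: the commutation identity $\partial_x[\mathcal{H}_x,M_g]=[\mathcal{H}_x,M_{g'}]+[\mathcal{H}_x,M_g]\partial_x$, the resulting Leibniz expansion, the kernel identity obtained by integrating by parts in $t$, and the Coifman--Rochberg--Weiss treatment of the $k=0$ term are all fine. The genuine gap is the step where you declare the operators with kernel $\frac{a^{(k)}(z)-a^{(k)}(x+t(z-x))}{z-x}$ to be ``first-Calderón-commutator kernels'' bounded uniformly in $t$ by ``the classical Calderón commutator theorem in its scale-invariant form.'' No such form applies: Calderón's first commutator has kernel $\frac{A(x)-A(z)}{(x-z)^2}$ with norm $\|A'\|_{L^\infty}$, whereas your kernel is a difference of a merely bounded function $b=a^{(k)}$ at two points moving at different speeds over a first-power denominator. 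Writing $z=x+y$, the $t$-dependent part of your operator is
\begin{equation*}
T_tf(x)=\mathrm{p.v.}\int b(x+ty)\,f(x+y)\,\frac{dy}{y},
\end{equation*}
i.e.\ the bilinear Hilbert transform with parameters $(t,1)$ acting on $(b,f)\in L^{\infty}\times L^{p}$, and subtracting the $t$-independent piece $[\mathcal{H}_x,b]f$ (which is CRW/BMO, not Calderón) does not remove its modulation invariance. These kernels are not Calderón--Zygmund with constants controlled by $\|b\|_{L^\infty}$ (any kernel derivative produces $b'$), so no classical singular-integral theorem bounds $T_t$ at fixed $t$: that is the Lacey--Thiele theorem, and the uniformity in $t$ that you need --- note your weight $k\,t^{k-1}\,dt$ does \emph{not} vanish at the degenerate endpoint $t=1$ --- is the much deeper uniform-estimates theory (Thiele, Grafakos--Li, Li), which furthermore is not known in the full range $1<p<\infty$ asserted by the proposition. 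In effect you have rediscovered Calderón's original representation of such commutators as averages of bilinear Hilbert transforms, an approach he himself could not close and which is precisely why the commutator theorem is proved by other means.

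For the record, the paper does not prove this proposition; it cites \cite[Lemma 3.1]{DawsonMCPON} and \cite[Proposition 3.8]{Dli}. The classical repair keeps the derivatives on \emph{both} sides instead of commuting them all onto one: the kernel of $\partial_x^l[\mathcal{H}_x,g]\partial_x^m$ is a constant times $\partial_x^l\partial_z^m\bigl(\frac{g(x)-g(z)}{x-z}\bigr)=\int_0^1 s^l(1-s)^m\,g^{(l+m+1)}(z+s(x-z))\,ds$, and one integration by parts in $s$ trades $g^{(l+m+1)}$ for $g^{(l+m)}$ exactly as in your computation --- but now, after peeling off the trivial boundary operators (compositions of $\mathcal{H}_x$ with multiplication by $g^{(l+m)}$), the \emph{integrated} kernel is an honest Calderón--Zygmund kernel with constant $\|g^{(l+m)}\|_{L^\infty}$, because every kernel derivative is again a Taylor-remainder expression of the correct size. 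Moreover, testing on the constant function collapses the $s$-average: the substitution $u=z+s(x-z)$ makes the inner principal-value integral $s$-independent, so $T1$ and $T^{*}1$ are constants times $\mathcal{H}_x(g^{(l+m)})\in\mathrm{BMO}$, and the $T1$ theorem (equivalently, Calderón commutator theory applied to the Taylor-remainder kernels) yields all $1<p<\infty$. So the fix is structural: either abandon the one-sided Leibniz reduction in favor of the two-sided kernel just described, or follow the paraproduct proof of \cite{Dli}; estimating your $t$-family member by member cannot work with classical tools.
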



\subsubsection*{\bf Commutator expansion}

Now, we present several commutator expansions for the operator $[-\mathcal{H}_xD_x^a,h]$ in one-dimensional variable.  These results are due to Ginibre and Velo in \cite{GiniVelo,GiniVelo2}.

We first introduce some additional notation. Let $a=2\mu+1>1$, $n$ be a non-negative integer and $h$ be a smooth function with suitable decay at infinity, for instance,  $h'\in C^{\infty}_{0}(\mathbb{R})$. We define the operators
\begin{equation}\label{commutremind}
    R_n(a):=[HD^a_x,h]-\frac{1}{2}(P_n(a)-HP_n(a)H),
\end{equation}
where $H=-\mathcal{H}_x$ and
\begin{equation}\label{GVexpression}
    P_n(a):=a\sum_{0\leq j\leq n} c_{2j+1} (-1)^j D_x^{\mu-j}h^{2j+1}D_{x}^{\mu-j}
\end{equation}
with 
\begin{equation}
    c_1=1 \text{ and } c_{2j+1}=\frac{1}{(2j+1)!} \prod_{0\leq k\leq j}(a^2-(2k+1)^2).
\end{equation}
The next proposition establishes continuity properties for the operator $R_n(a)$.
\begin{prop}\label{valueofn}
Let $n$ be a non-negative integer, $a\geq 1$, and $b\geq 0$ be such that
\begin{equation}
    2n+1\leq a+2b\leq 2n+3.
\end{equation}
Then,
\begin{itemize}
    \item[(i)] the operator $D_x^b R_n(a) D^a_x$ is bounded in $L^2(\mathbb{R})$ and satisfy
    \begin{equation}\label{Restestimate}
        \|D^b_x R_n(a)D^b_x f\|_{L^2} \lesssim \|\widehat{D_x^{a+2b}h}\|_{L^1_{\xi}} \|f\|_{L^2}.
    \end{equation}
    \item[(ii)] If in addition $a+2b< 2n+3$, then the operator $D_x^b R_n(a) D^a_x$ is compact in $L^2(\mathbb{R})$.
\end{itemize}
\end{prop}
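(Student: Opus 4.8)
The plan is to pass to the Fourier side and treat $D_x^bR_n(a)D_x^b$ (the symmetric operator appearing in \eqref{Restestimate}) as a bilinear multiplier in $(h,f)$. Writing $p(\xi)=i\sign(\xi)|\xi|^a$ for the symbol of $HD_x^a$ (recall $H=-\mathcal{H}_x$), one has
\[
\widehat{[HD_x^a,h]f}(\xi)=\int\big(p(\xi)-p(\eta)\big)\,\widehat{h}(\xi-\eta)\,\widehat{f}(\eta)\,d\eta,
\]
while the $j$-th term of $\tfrac12(P_n(a)-HP_n(a)H)$ contributes to the multiplier the factor $ac_{2j+1}(-1)^j|\xi|^{\mu-j}(i(\xi-\eta))^{2j+1}|\eta|^{\mu-j}\cdot\tfrac{1}{2}(1+\sign(\xi)\sign(\eta))$; in particular the subtracted part lives on the sector $\xi\eta>0$, where $(-1)^j(i(\xi-\eta))^{2j+1}=i(\xi-\eta)^{2j+1}$. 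Denoting by $m(\xi,\eta)$ the full symbol of $R_n(a)$, the proof reduces to the uniform bound
\[
|K(\xi,\eta)|:=\frac{|\xi|^b\,|m(\xi,\eta)|\,|\eta|^b}{|\xi-\eta|^{a+2b}}\lesssim 1,\qquad 2n+1\le a+2b\le 2n+3.
\]
Indeed, factoring $|\xi|^bm|\eta|^b=K\,|\xi-\eta|^{a+2b}$ rewrites $\widehat{D_x^bR_n(a)D_x^bf}(\xi)$ as $\int K(\xi,\eta)\,\widehat{D_x^{a+2b}h}(\xi-\eta)\,\widehat{f}(\eta)\,d\eta$, which is $\le\|K\|_{L^\infty}\big(|\widehat{D_x^{a+2b}h}|*|\widehat{f}|\big)(\xi)$; Young's and Plancherel's inequalities then give \eqref{Restestimate}.

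I would estimate $K$ in three regions, with $\zeta=\xi-\eta$, $\sigma=(\xi+\eta)/2$, using that $m$ is antisymmetric under $\xi\leftrightarrow\eta$. On $\xi\eta<0$ the subtraction is absent, $m=p(\xi)-p(\eta)$ and $|\zeta|=|\xi|+|\eta|$, so $|K|\lesssim 1$ from $|\xi|^a+|\eta|^a\lesssim(|\xi|+|\eta|)^a$ and AM–GM. On the sector $\xi,\eta>0$ (WLOG $\xi\ge\eta$) and in the off-diagonal range $|\zeta|\gtrsim\xi$, I bound each monomial of $m/i=(\xi^a-\eta^a)-a\sum_{0\le j\le n}c_{2j+1}\xi^{\mu-j}\zeta^{2j+1}\eta^{\mu-j}$ separately; since $a=2\mu+1$ and $\xi\approx\zeta$, each collapses to a nonnegative power of $\eta/\zeta\le 1$, the $j$-th being $(\eta/\zeta)^{b+\mu-j}$. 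This is exactly where the lower bound enters: $a+2b\ge 2n+1$ is equivalent to $b+\mu-j\ge n-j\ge 0$, which makes these exponents nonnegative and simultaneously absorbs the negative powers $\eta^{\mu-j}$ ($j\ge 1$), removing any low-frequency singularity of $P_n$.

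The diagonal range $|\zeta|\lesssim\sigma$ is where individual terms blow up and cancellation must be used; this is the main obstacle. Passing to $(\sigma,\zeta)$ one has $\xi^a-\eta^a=2\sum_{k\ge 0}\binom{a}{2k+1}\sigma^{a-2k-1}(\zeta/2)^{2k+1}$ and $\xi^{\mu-j}\eta^{\mu-j}=(\sigma^2-\zeta^2/4)^{\mu-j}$, and the crux is the algebraic identity that the coefficients $c_{2j+1}=\frac{1}{(2j+1)!}\prod_{0\le k\le j}(a^2-(2k+1)^2)$ make $a\sum_j c_{2j+1}\zeta^{2j+1}(\sigma^2-\zeta^2/4)^{\mu-j}$ agree with the series for $\xi^a-\eta^a$ through order $\zeta^{2n+1}$. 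I would verify this by matching, for each $0\le m\le n$, the coefficient $\binom{a}{2m+1}2^{-2m}$ of $\sigma^{a-2m-1}\zeta^{2m+1}$ against $a\sum_{j=0}^{m}c_{2j+1}\binom{\mu-j}{m-j}(-1/4)^{m-j}$, checking the base case $m=0$ (where $2\mu=a-1$ gives $a\sigma^{a-1}\zeta$ on both sides) and proceeding by induction on $m$. Granting the identity, the remainder is $O(|\zeta|^{2n+3}\sigma^{a-2n-3})$, so with $\xi\approx\eta\approx\sigma$ one gets $|K|\lesssim(|\zeta|/\sigma)^{2n+3-a-2b}\le 1$ by the upper bound $a+2b\le 2n+3$, completing (i).

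For (ii) I would first note that \eqref{Restestimate} makes $h\mapsto D_x^bR_n(a)D_x^b$ continuous in operator norm with respect to $\|\widehat{D_x^{a+2b}h}\|_{L^1}$; approximating $h$ by $h_{\e}$ with $\widehat{h_{\e}}$ compactly supported, it suffices to prove compactness when $\widehat{h}$ is supported in $|\zeta|\le M$. In that case the opposite-sign and off-diagonal regions only involve bounded frequencies, so for $\sigma$ large one is always in the diagonal regime, where the now strict inequality $a+2b<2n+3$ forces $|K|\lesssim(|\zeta|/\sigma)^{2n+3-a-2b}\to 0$ as $\sigma\to\infty$. Truncating the frequency-space kernel $|\xi|^bm(\xi,\eta)|\eta|^b\widehat{h}(\xi-\eta)$ to $|\xi|\le R$ yields a kernel supported in a bounded set and bounded there, hence a Hilbert–Schmidt (so compact) operator; by Schur's test together with the uniform decay of $K$, the complementary piece has operator norm tending to $0$ as $R\to\infty$. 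Thus $D_x^bR_n(a)D_x^b$ is a norm limit of compact operators and is itself compact.
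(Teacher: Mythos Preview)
Your sketch is correct and follows essentially the same route as the proof in \cite[Proposition 2.2]{GiniVelo2}, which is all the paper invokes here. The Fourier-side bilinear-multiplier reduction, the three-region analysis of $K(\xi,\eta)=|\xi|^b|m(\xi,\eta)||\eta|^b/|\xi-\eta|^{a+2b}$, and the role of the constraints $2n+1\le a+2b\le 2n+3$ (the lower bound killing the low-frequency singularity of $P_n$, the upper bound controlling the diagonal remainder) are exactly Ginibre--Velo's argument. The one step you leave as ``induction on $m$'' --- that the coefficients $c_{2j+1}$ are precisely those making $a\sum_{j\le n}c_{2j+1}\zeta^{2j+1}(\sigma^2-\zeta^2/4)^{\mu-j}$ agree with the odd-power expansion of $(\sigma+\zeta/2)^a-(\sigma-\zeta/2)^a$ through order $\zeta^{2n+1}$ --- is the algebraic heart of the matter; it is carried out in \cite{GiniVelo,GiniVelo2}, and your outline of the matching is the right way to do it. The compactness argument via density plus Hilbert--Schmidt truncation, exploiting the strict inequality to force $K\to 0$ at high frequency, is also standard and correct.
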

\begin{proof}
See \cite[Proposition 2.2 ]{GiniVelo2}.
\end{proof}

\subsection{Pseudo-differential Operators}
To facilitate the exposition of our results, this subsection is intended to briefly indicate some preliminaries results  concerning pseudo-differential operators, as well as, some consequences of them. 
\begin{definition}
Let $a \in C^{\infty}(\mathbb{R}^d \times \mathbb{R}^d)$ satisfying 
\begin{equation*}
|\partial^{\beta}_x\partial^{\gamma}_{\xi}a(x,\xi)| \leq A_{\gamma,\beta}(1+ |\xi|)^{m-|\gamma|},    
\end{equation*}
for some $m \in \mathbb{R}^d$ and for all the multi-index $\gamma$ and $\beta$. This function a will be called a symbol of order m and $\mathcal{S}^m(\mathbb{R}^d \times \mathbb{R}^d)$, simplifying $\mathcal{S}^m$ will represent the set of these type of functions.
\end{definition}

\begin{definition}
A pseudo-differential operator is a mapping $f \mapsto \Psi_a f$ given by
\begin{equation*}
(\Psi_af)(x) = \int_{\mathbb{R}^n} e^{2\pi i x\cdot\xi} a(x,\xi)\widehat{f}(\xi)\, d\xi,    
\end{equation*}
where $a(x, \xi)$ is the symbol of $\Psi_a$. 
\end{definition}

After these definitions, we will need the following theorem.
\begin{theorem}\label{Cont.PSDO}
Let $a$ be a symbol of order $0$, i.e., $a \in \mathcal{S}^0$. Then, the operator $\Psi_a$, initially defined on $S(\mathbb{R}^d)$ can be extended to a bounded operator from $L^2(\mathbb{R}^d)$ to itself.   
\end{theorem}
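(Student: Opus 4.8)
The plan is to establish the a priori bound $\|\Psi_a f\|_{L^2}\lesssim \|f\|_{L^2}$ for $f\in S(\mathbb{R}^d)$, with an implicit constant depending only on finitely many of the seminorms $A_{\gamma,\beta}$ of the symbol, and then to extend $\Psi_a$ to all of $L^2(\mathbb{R}^d)$ by density. To prove the a priori bound I would invoke the Cotlar--Stein almost-orthogonality lemma after a Littlewood--Paley decomposition in the frequency variable. Fix a partition of unity $1=\sum_{j\ge 0}\psi_j(\xi)$ with $\psi_0$ supported in $|\xi|\lesssim 1$ and $\psi_j$ supported in $|\xi|\sim 2^j$ for $j\ge 1$, and set $T_j:=\Psi_{a\psi_j}$, so that $\Psi_a=\sum_{j\ge 0}T_j$ on $S(\mathbb{R}^d)$.

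First I would record pointwise kernel bounds. The kernel of $T_j$ is $K_j(x,y)=\int e^{2\pi i(x-y)\cdot\xi}a(x,\xi)\psi_j(\xi)\,d\xi$. Since $a\in\mathcal{S}^0$ gives $|\partial_\xi^\gamma(a\psi_j)(x,\xi)|\lesssim 2^{-j|\gamma|}$ on the support (which has measure $\sim 2^{jd}$), integrating by parts $N$ times against $e^{2\pi i(x-y)\cdot\xi}$ yields $|K_j(x,y)|\lesssim 2^{jd}(1+2^j|x-y|)^{-N}$ for every $N$. By Schur's test this gives $\|T_j\|_{L^2\to L^2}\lesssim 1$ uniformly in $j$; of course summing these uniform bounds over $j$ diverges, which is exactly why almost orthogonality is needed.

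For Cotlar--Stein I need $\|T_jT_k^*\|$ and $\|T_j^*T_k\|$ to be controlled by $\gamma(j-k)^2$ with $\sum_m\gamma(m)<\infty$. The term $T_jT_k^*$ is easy: computing its kernel and integrating out the intermediate variable produces the factor $\psi_j(\xi)\psi_k(\xi)$, which vanishes unless $|j-k|\le 1$, so $T_jT_k^*=0$ for $|j-k|\ge 2$. The delicate term is $T_j^*T_k$, which is where the regularity of $a$ in the \emph{spatial} variable must be exploited: morally, since $\partial_x^\beta a$ is bounded for all $\beta$, the spatial Fourier transform of $a(\cdot,\xi)$ is concentrated near the origin with rapidly decaying tails, so composing an operator localized at frequency $2^k$ with one localized at frequency $2^j$ forces $|j-k|$ to be small up to a rapidly decaying error, giving $\|T_j^*T_k\|\lesssim 2^{-\delta|j-k|}$ for some $\delta>0$.

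I expect this last estimate to be the \textbf{main obstacle}. The naive attempt---bounding the kernel of $T_j^*T_k$ and integrating by parts in the intermediate spatial variable---fails because the symbol does not decay in $x$, so the intermediate integral is not absolutely convergent. The standard remedy, which I would adopt, is to refine the decomposition by also localizing in physical space into unit cubes, producing a two-parameter almost-orthogonal family indexed by the dyadic frequency scale and the spatial cube: the $\partial_\xi$-bounds give spatial decay of the kernels (localization to cubes), while the $\partial_x$-bounds give the frequency-separation decay, and together they yield a genuinely summable almost-orthogonality constant. Once $\|T_jT_k^*\|$ and $\|T_j^*T_k\|$ are bounded by $\gamma(j-k)^2$ with $\sum_m\gamma(m)<\infty$, the Cotlar--Stein lemma gives $\|\sum_j T_j\|_{L^2\to L^2}\le\sum_m\gamma(m)\lesssim 1$, which completes the proof after the density extension.
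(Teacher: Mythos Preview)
The paper does not actually prove this theorem: it simply defers to \cite[Chapter VI, Theorem 1]{stein1993harmonic}. Your outline is a legitimate, self-contained route to the result, and the identification of $T_j^*T_k$ as the nontrivial term (together with the phase-space refinement needed to handle it) is accurate.

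For context, the argument in Stein's Chapter VI is organized differently from yours. Rather than a direct Cotlar--Stein decomposition, Stein uses the symbolic calculus (Propositions \ref{PO1}--\ref{PO2} in the present paper) to run a \emph{reduction-of-order} bootstrap: one computes the symbol of $\Psi_a^*\Psi_a\in\mathcal{S}^0$, chooses $M>\sup_{x,\xi}|a(x,\xi)|^2$, takes a smooth square root $c=(M-|a|^2)^{1/2}\in\mathcal{S}^0$, and uses $\Psi_c^*\Psi_c=M\cdot I-\Psi_a^*\Psi_a+R$ with $R$ of order $-1$ to obtain $\|\Psi_a f\|_{L^2}^2\le M\|f\|_{L^2}^2+|\langle Rf,f\rangle|$. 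Iterating drops the order of the remainder until it is negative enough to be bounded by an elementary Schur-test or kernel argument. This approach avoids almost-orthogonality entirely and leans instead on the composition calculus already recorded in the paper. Your Cotlar--Stein route is closer in spirit to the Calder\'on--Vaillancourt proof for $S^0_{0,0}$ symbols; it is more hands-on and does not presuppose the symbolic calculus, at the cost of having to carry out the two-parameter (frequency $\times$ spatial cube) decomposition explicitly. Either approach is standard and yields the same quantitative dependence on finitely many seminorms of the symbol.
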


\begin{proof}
The proof can be consulted in \cite[Chapter VI, Theorem 1]{stein1993harmonic}. 
\end{proof}

For a generalization of Theorem \ref{Cont.PSDO}, we refer to \cite[Chapter 3, Theorem 3.6]{raymond1991elementary}. A key ingredient in our considerations is the following kernel representation for pseudo-differential operators.
\begin{prop}\label{Kernelpseudo}
Let $a \in \mathcal{S}^m ,$  and $\Psi_a$ its associated pseudo-differential operator. Then, there exists a kernel $k_a\in C^{\infty}(\mathbb{R}\times (\mathbb{R}^d \setminus\{0\}))$ satisfying the following properties:
\begin{itemize}
\item[(i)] The operator  admits the following representation
$$\Psi_a f(x)=\int k_a(x,x-y)f(y) \, dy,$$
for all $x\notin \supp(f)$,
\item[(ii)] additionally, for all multi-indices $\gamma$ and $\beta$, and all $N \geq 0,$ integer, it follows
  $$|\partial_x^{\beta}\partial_z^{\gamma}k_a(x,z)| \leq A_{\gamma, \beta, N}|z|^{-d-m-|\gamma|-N}, \quad z \neq 0,$$    
whenever $d+m+|\gamma| + N > 0.$ 
\end{itemize}
\end{prop}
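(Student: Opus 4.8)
The plan is to realize $k_a$ as the partial inverse Fourier transform of the symbol in the frequency variable and to read off its regularity and decay from oscillatory-integral estimates. Substituting $\widehat{f}(\xi)=\int e^{-2\pi i y\cdot\xi}f(y)\,dy$ into the definition of $\Psi_a$ suggests the formula
\[
k_a(x,z)=\int_{\mathbb{R}^d}e^{2\pi i z\cdot\xi}\,a(x,\xi)\,d\xi,
\]
after which $\Psi_a f(x)=\int k_a(x,x-y)f(y)\,dy$ would follow from interchanging the order of integration. Since $a\in\mathcal{S}^m$ is not integrable in $\xi$ when $m\ge-d$, the first task is to give this integral a meaning. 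I would insert a cutoff $\chi(\e\xi)$, with $\chi\in C_c^{\infty}(\mathbb{R}^d)$ equal to $1$ near the origin, for which the exchange of integrals is legitimate by Fubini, and then pass to the limit $\e\to0$. For $x\notin\supp f$ one has $z=x-y\neq0$ throughout $\supp f$, and the convergence of the regularized $\xi$-integral to $k_a(x,x-y)$ on $\{z\neq0\}$ (established in the next step) together with dominated convergence yields representation (i).

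For part (ii), differentiating under the integral sign gives, at the formal level,
\[
\partial_x^{\beta}\partial_z^{\gamma}k_a(x,z)=\int_{\mathbb{R}^d}e^{2\pi i z\cdot\xi}(2\pi i\xi)^{\gamma}\,\partial_x^{\beta}a(x,\xi)\,d\xi=:\int_{\mathbb{R}^d}e^{2\pi i z\cdot\xi}\,b(x,\xi)\,d\xi,
\]
where the symbol estimates show that $b$ has order $m'=m+|\gamma|$ (the operator $\partial_z^{\gamma}$ raises the order by $|\gamma|$, while $\partial_x^{\beta}$ preserves it). The oscillation is then exploited through the identity $e^{2\pi i z\cdot\xi}=(4\pi^2|z|^2)^{-1}(-\Delta_{\xi})e^{2\pi i z\cdot\xi}$: integrating by parts $M$ times moves $(-\Delta_{\xi})^{M}$ onto $b$, producing a gain $|z|^{-2M}$ at the expense of lowering the order of $b$ by $2M$.

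The sharp power of $|z|$ comes from a Littlewood--Paley type decomposition $b=b\chi+\sum_{j\ge1}b\psi_j$, with $\psi_j$ supported on $|\xi|\sim2^{j}$. On the $j$-th piece the trivial bound is $\lesssim 2^{j(m'+d)}$, while the integration-by-parts bound is $\lesssim|z|^{-2M}2^{j(m'+d-2M)}$. Summing these after splitting the series at the frequency $2^{j}\sim|z|^{-1}$ (the trivial bound below it, the oscillatory bound with $2M>m'+d$ above it) yields
\[
\big|\partial_x^{\beta}\partial_z^{\gamma}k_a(x,z)\big|\lesssim|z|^{-(d+m+|\gamma|)}
\]
whenever $d+m+|\gamma|>0$, i.e.\ the case $N=0$. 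The low-frequency piece $b\chi$ is compactly supported in $\xi$, so its transform is smooth and rapidly decreasing in $z$; more generally, integrating by parts $|\mu|$ times with $|\mu|>m'+d$ \emph{without} dyadic splitting shows $\big|\partial_x^{\beta}\partial_z^{\gamma}k_a(x,z)\big|\lesssim_N|z|^{-N}$ for every $N$ when $|z|\ge1$. Combining the small-$z$ singularity with this rapid large-$z$ decay gives the stated bound $|z|^{-(d+m+|\gamma|+N)}$ under the hypothesis $d+m+|\gamma|+N>0$; when $d+m+|\gamma|\le0$ the integral converges absolutely and $k_a$ is bounded near $z=0$, again consistent with the claim. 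These same integrations by parts justify differentiation under the integral sign and hence the smoothness of $k_a$ on $\{z\neq0\}$.

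The hard part will be estimate (ii): making the oscillatory integral rigorous and, above all, carrying out the dyadic balancing that produces the \emph{sharp} exponent $-(d+m+|\gamma|)$ rather than a lossy one, together with the bookkeeping that lets a single $N$ account simultaneously for the near-diagonal singularity and the far-field decay. The interchange of integrals in (i) also demands care, but it reduces to the regularization argument above once the kernel bounds on $\{z\neq0\}$ are in hand.
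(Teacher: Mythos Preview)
The paper does not supply a proof of this proposition: it is quoted as a standard fact from the theory of pseudo-differential operators, in the same spirit as the surrounding results that are referred to \cite{stein1993harmonic} and \cite{raymond1991elementary}. Your outline is the classical argument (essentially the one in Stein, Chapter~VI, \S4): define $k_a$ as the oscillatory $\xi$-integral of the symbol, regularize by a cutoff to justify the kernel representation off the support of $f$, and obtain the decay in (ii) by trading powers of $|z|$ for $\xi$-derivatives via $(-\Delta_\xi)^M e^{2\pi i z\cdot\xi}=(4\pi^2|z|^2)^M e^{2\pi i z\cdot\xi}$, with a dyadic decomposition in $|\xi|$ to hit the sharp exponent. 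This is correct; there is nothing to compare against in the paper itself.
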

The following result will be useful to approximate the composition between pseudo-differential operators.

\begin{prop}\label{PO1}
Let $a$ and $b$ symbols belonging to $\mathcal{S}^{r_{1}}$ and $\mathcal{S}^{r_{2}}$ respectively. Then, there is a symbol $c \in \mathcal{S}^{r_{1} + r_{2}}$ so that 
\begin{equation*}
    \Psi_c = \Psi_a\circ\Psi_b.
\end{equation*}
Moreover, 
\begin{equation}
    c \sim \sum_{\beta}\frac{1}{(2\pi i)^{|\beta|} \beta !}(\partial^{\beta}_{\xi}a)\cdot(\partial^{\beta}_{x}b),
\end{equation}
in the sense that 
\begin{equation*}
    c - \sum_{|\beta|< N}\frac{(2\pi i)^{-|\beta|}}{\beta !}(\partial^{\beta}_{\xi}a)\cdot(\partial^{\beta}_{x}b) \in \mathcal{S}^{r_{1} + r_{2} - N}
\end{equation*}
for all $N \geq 0$.
\end{prop}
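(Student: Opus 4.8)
The plan is to compute the composition $\Psi_a\circ\Psi_b$ directly on Schwartz functions, read off a candidate symbol, and then verify that this candidate is a genuine symbol of order $r_1+r_2$ whose asymptotic expansion has the stated form. Writing both operators in their integral representation and substituting the Fourier transform of $\Psi_b f$, one finds for $f\in S(\mathbb{R}^d)$ that
\begin{equation*}
\Psi_a\Psi_b f(x)=\int e^{2\pi i x\cdot\eta}\,c(x,\eta)\,\widehat{f}(\eta)\,d\eta,
\end{equation*}
where, at least formally,
\begin{equation*}
c(x,\eta)=\int\int e^{2\pi i (x-z)\cdot(\xi-\eta)}\,a(x,\xi)\,b(z,\eta)\,dz\,d\xi.
\end{equation*}
Since $a$ and $b$ grow in their frequency arguments, this double integral is not absolutely convergent, so the first step is to give it meaning as an iterated oscillatory integral: either insert a cutoff $\chi(\e\xi)$ and pass to the limit $\e\to0$, or integrate by parts in $z$ using $\langle\xi-\eta\rangle^{-2M}(1-(2\pi)^{-2}\Delta_z)^{M}e^{2\pi i(x-z)\cdot(\xi-\eta)}=e^{2\pi i(x-z)\cdot(\xi-\eta)}$ to gain absolute convergence before any further manipulation.

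After this setup I would change variables $\zeta=\xi-\eta$ and Taylor expand the factor $a(x,\eta+\zeta)$ in $\zeta$ about $\zeta=0$,
\begin{equation*}
a(x,\eta+\zeta)=\sum_{|\beta|<N}\frac{\zeta^\beta}{\beta!}\,(\partial_\xi^\beta a)(x,\eta)+r_N(x,\eta,\zeta),
\end{equation*}
with $r_N$ the integral-form Taylor remainder. For each monomial term I would use the identity $\zeta^\beta e^{2\pi i(x-z)\cdot\zeta}=(-2\pi i)^{-|\beta|}\partial_z^\beta e^{2\pi i(x-z)\cdot\zeta}$ and integrate by parts in $z$, transferring the derivatives onto $b(z,\eta)$; the residual $z$- and $\zeta$-integrals then collapse by Fourier inversion, reproducing $(\partial_x^\beta b)(x,\eta)$ on the diagonal $z=x$ and leaving the constant $(2\pi i)^{-|\beta|}$. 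This yields exactly
\begin{equation*}
\sum_{|\beta|<N}\frac{1}{(2\pi i)^{|\beta|}\beta!}(\partial_\xi^\beta a)(x,\eta)\,(\partial_x^\beta b)(x,\eta),
\end{equation*}
which is the claimed principal part, and each summand manifestly lies in $\mathcal{S}^{r_1+r_2-|\beta|}$ by the Leibniz rule applied to the defining symbol estimates.

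The main obstacle is controlling the remainder symbol $c_N$ arising from $r_N$ and proving $c_N\in\mathcal{S}^{r_1+r_2-N}$. Here I would again exploit the oscillation: each of the $N$ powers of $\zeta$ still present in $r_N$ can be converted, as above, into a $z$-derivative landing on $b$, while the control of $\partial_\xi^\beta a$ with $|\beta|=N$ provides the gain of $N$ orders. To check the symbol bounds $|\partial_x^\gamma\partial_\eta^\delta c_N(x,\eta)|\lesssim\langle\eta\rangle^{r_1+r_2-N-|\delta|}$, I would differentiate under the regularized integral, distribute the $x$- and $\eta$-derivatives across $a$, $b$, and the phase by Leibniz, and in each resulting term integrate by parts sufficiently many times in $z$ so that the $\zeta$-integral converges with the correct power of $\langle\eta\rangle$. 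Balancing these repeated integrations by parts against the polynomial growth of the symbols is the genuinely technical heart of the argument. Once $c\in\mathcal{S}^{r_1+r_2}$ with the stated asymptotics is established, the identity $\Psi_c=\Psi_a\circ\Psi_b$ holds on $S(\mathbb{R}^d)$ by the very construction, and the $L^2$-boundedness statement of Theorem \ref{Cont.PSDO} confirms the consistency of the order-$0$ case.
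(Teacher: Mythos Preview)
The paper does not actually prove this proposition: it is stated without proof as a standard result from the symbolic calculus of pseudo-differential operators, in the same spirit as the surrounding preliminaries (the neighboring Theorem \ref{Cont.PSDO} simply refers to \cite[Chapter VI]{stein1993harmonic}, and Propositions \ref{Kernelpseudo} and \ref{PO2} are likewise quoted). So there is no ``paper's own proof'' to compare against.

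That said, your sketch is exactly the classical argument one finds in those references: represent the composed symbol as an oscillatory integral, Taylor expand $a(x,\eta+\zeta)$ in $\zeta$, convert powers of $\zeta$ into $z$-derivatives on $b$ via the phase, and control the remainder by repeated integration by parts. The only part you leave genuinely unfinished is the verification that the remainder $c_N$ satisfies the full family of symbol estimates (all $\partial_x^\gamma\partial_\eta^\delta$), which requires a careful split into the regions $|\zeta|\le \tfrac{1}{2}|\eta|$ and $|\zeta|>\tfrac{1}{2}|\eta|$ (or an equivalent device) to get the correct power $\langle\eta\rangle^{r_1+r_2-N-|\delta|}$; this is the standard technical core and your plan gestures at it correctly. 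Nothing in your outline is wrong.
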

Another important consequence regarding pseudo-differential operators is the following symbolic calculus for commutators.
\begin{prop}\label{PO2}
For $a \in \mathcal{S}^{r_{1}}$ and $b \in \mathcal{S}^{r_{2}} $ we define the commutator $[\Psi_a,\Psi_b]$ by
\begin{equation*}
    [\Psi_a,\Psi_b] = \Psi_a \circ \Psi_b - \Psi_b \circ \Psi_a .
\end{equation*}
Then, the symbol of the operator 
is given by 
\begin{equation}
    c = \frac{1}{i} \sum_{j=1}^{d} \left( \frac{\partial a}{\partial_{\xi_j}}\frac{\partial b}{\partial_{x_j}} - \frac{\partial a}{\partial_{x_j}}\frac{\partial b}{\partial_{\xi_j}} \right) \mbox{ mod } \mathcal{S}^{r_{1} + r_{2} - 2}.
\end{equation}
\end{prop}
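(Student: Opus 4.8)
The plan is to obtain the commutator symbol directly from the composition formula of Proposition \ref{PO1}, applied twice and then subtracted. Since $[\Psi_a,\Psi_b]=\Psi_a\circ\Psi_b-\Psi_b\circ\Psi_a$ and Proposition \ref{PO1} guarantees that each composition is itself a pseudo-differential operator with symbol in $\mathcal{S}^{r_1+r_2}$, the commutator is again a pseudo-differential operator, and its symbol is the difference $c=c_{ab}-c_{ba}$ of the two composition symbols. The entire argument therefore reduces to comparing the low-order terms of the two asymptotic expansions, and I expect no genuine obstacle beyond careful bookkeeping.

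First I would invoke Proposition \ref{PO1} with truncation index $N=2$. This gives, modulo $\mathcal{S}^{r_1+r_2-2}$,
\begin{equation*}
c_{ab}\equiv ab+\frac{1}{2\pi i}\sum_{j=1}^{d}(\partial_{\xi_j}a)(\partial_{x_j}b),
\end{equation*}
where the $|\beta|=0$ contribution is $ab$ and the $|\beta|=1$ contributions run over $\beta=e_j$; the remainder lies in $\mathcal{S}^{r_1+r_2-2}$ precisely because $N=2$. Interchanging the roles of $a$ and $b$ yields, again modulo $\mathcal{S}^{r_1+r_2-2}$,
\begin{equation*}
c_{ba}\equiv ab+\frac{1}{2\pi i}\sum_{j=1}^{d}(\partial_{\xi_j}b)(\partial_{x_j}a).
\end{equation*}

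Subtracting the two expansions, the principal terms $ab$ cancel identically, and the surviving subprincipal terms assemble into the antisymmetric (Poisson-bracket) form
\begin{equation*}
c=c_{ab}-c_{ba}\equiv\frac{1}{2\pi i}\sum_{j=1}^{d}\big((\partial_{\xi_j}a)(\partial_{x_j}b)-(\partial_{x_j}a)(\partial_{\xi_j}b)\big)\pmod{\mathcal{S}^{r_1+r_2-2}},
\end{equation*}
which is exactly the claimed expression, the numerical constant being inherited from the normalization of the Fourier transform appearing in the definition of $\Psi_a$. The one point that must be checked is that the difference of the two remainders still lies in $\mathcal{S}^{r_1+r_2-2}$; this is immediate since $\mathcal{S}^{r_1+r_2-2}$ is a linear space and each remainder belongs to it by Proposition \ref{PO1}. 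Conceptually, the content is simply that the leading symbols commute (both compositions have principal symbol $ab$), so the commutator gains two orders of smoothing relative to the naive product, while the first-order terms combine into the bracket.
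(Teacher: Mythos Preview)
Your argument is correct and is exactly the standard derivation: apply the composition expansion of Proposition \ref{PO1} to both $\Psi_a\circ\Psi_b$ and $\Psi_b\circ\Psi_a$ at level $N=2$, subtract, observe that the $|\beta|=0$ terms cancel, and collect the $|\beta|=1$ terms into the Poisson bracket. Your remark that the remainder stays in $\mathcal{S}^{r_1+r_2-2}$ because that class is linear is the only point that needs to be said, and you say it. The discrepancy between the constant $\frac{1}{2\pi i}$ you obtain and the $\frac{1}{i}$ in the statement is, as you note, purely a Fourier normalization issue (the paper uses the $e^{2\pi i x\cdot\xi}$ convention in the definition of $\Psi_a$, which matches your computation and Proposition \ref{PO1}; the constant in the stated formula is written with a different convention).

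The paper does not supply its own proof of this proposition: it is stated as a standard fact from the symbolic calculus (in the same block as Proposition \ref{PO1} and the kernel representation Proposition \ref{Kernelpseudo}, all quoted without proof), so there is nothing to compare against.

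One small inaccuracy in your closing sentence: the commutator gains \emph{one} order relative to the naive composition, not two. The compositions have order $r_1+r_2$; after cancellation of the principal symbols the commutator has principal symbol of order $r_1+r_2-1$ (the bracket term), and it is only the \emph{remainder} beyond that which sits in $\mathcal{S}^{r_1+r_2-2}$. This does not affect the proof, only the concluding commentary.
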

Now, we present  through  the following result  some relations  between $J^{s}(\theta f)$ and $\theta J^{s}f.$
\begin{prop}\label{PO3}
Let $s>0$ and $m,d$ be positive integers such that $m\geq \max\{s,d\}$. Additionally, we  consider $\theta \in C^{\infty}(\mathbb{R}^d)$, $0\leq \theta \leq 1,$ satisfying that $\partial^{\gamma}\theta \in L^{\infty}(\mathbb{R}^d)$ for any multi-index $\gamma$. Then, there exist some constants $c_{\gamma}$, pseudo-differential operators of order zero $\Psi^{\gamma}$ for each multi-index $0\leq |\gamma|\leq m$, and $K_{s-m}$ of order $s-m$ such that
\begin{equation}
J^{s}(\theta f)=\sum_{1\leq |\gamma|\leq m} c_{\gamma}\partial_{x}^{\gamma} \theta \, \Psi^{\gamma}(J^{s-|\gamma|}f)+\theta J^{s}f +K_{s-m}f,
\end{equation}
provided that $f$ is  regular enough.  
\end{prop}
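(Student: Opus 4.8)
The plan is to read the identity off from the symbolic calculus already assembled in this section. Write $M_g$ for the operator of multiplication by a function $g$. The first observation is that $J^s=\Psi_a$ is the pseudo-differential operator with symbol $a(\xi)=(1+|\xi|^2)^{s/2}\in\mathcal{S}^s$, while multiplication by $\theta$ is $\Psi_b=M_\theta$ with symbol $b(x)=\theta(x)$; since $b$ is independent of $\xi$ and all its $x$-derivatives are bounded by hypothesis, one checks immediately that $b\in\mathcal{S}^0$. Thus $J^s(\theta f)=(\Psi_a\circ\Psi_b)f$, and Proposition \ref{PO1} furnishes a symbol $c\in\mathcal{S}^s$ with $\Psi_c=\Psi_a\circ\Psi_b$ and the asymptotic expansion $c\sim\sum_\beta \frac{(2\pi i)^{-|\beta|}}{\beta!}(\partial_\xi^\beta a)(\partial_x^\beta\theta)$. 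Because $a$ depends only on $\xi$ and $\theta$ only on $x$, each term factors cleanly as $\partial_x^\beta\theta(x)$ times $\partial_\xi^\beta a(\xi)$.

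The next step is to convert each frequency factor $\partial_\xi^\gamma a$ into an order-zero operator composed with $J^{s-|\gamma|}$. Since $1+|\xi|^2$ never vanishes, $a$ is smooth with $\partial_\xi^\gamma a\in\mathcal{S}^{s-|\gamma|}$, so dividing by the elliptic symbol $(1+|\xi|^2)^{(s-|\gamma|)/2}$ produces $\psi^\gamma(\xi):=\partial_\xi^\gamma a(\xi)\,(1+|\xi|^2)^{-(s-|\gamma|)/2}\in\mathcal{S}^0$. Setting $\Psi^\gamma:=\Psi_{\psi^\gamma}$ and $c_\gamma:=(2\pi i)^{-|\gamma|}/\gamma!$, the Fourier multiplier $\Psi^\gamma J^{s-|\gamma|}$ has symbol exactly $\partial_\xi^\gamma a(\xi)$; for $\gamma=0$ we have $\psi^0\equiv 1$, $c_0=1$ and $\Psi^0=I$, which recovers the term $\theta J^s f$. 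Moreover, composing a multiplication operator on the left with a Fourier multiplier yields the exact product symbol (one computes $M_g\Psi_p f(x)=\int e^{2\pi i x\cdot\xi}g(x)p(\xi)\widehat{f}(\xi)\,d\xi$), so $M_{\partial_x^\gamma\theta}\,\Psi^\gamma J^{s-|\gamma|}$ has symbol precisely $\partial_x^\gamma\theta(x)\,\partial_\xi^\gamma a(\xi)$, with no lower-order corrections.

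Consequently the finite sum $\sum_{0\le|\gamma|\le m} c_\gamma\, M_{\partial_x^\gamma\theta}\,\Psi^\gamma J^{s-|\gamma|}$ has symbol equal to the partial sum $\sum_{|\beta|\le m}\frac{(2\pi i)^{-|\beta|}}{\beta!}(\partial_\xi^\beta a)(\partial_x^\beta\theta)$ of Proposition \ref{PO1}. Defining $K_{s-m}$ to be the difference $J^s M_\theta-\sum_{0\le|\gamma|\le m} c_\gamma\, M_{\partial_x^\gamma\theta}\,\Psi^\gamma J^{s-|\gamma|}$, Proposition \ref{PO1} applied with $N=m+1$ guarantees that its symbol lies in $\mathcal{S}^{s-(m+1)}\subset\mathcal{S}^{s-m}$, so $K_{s-m}$ is a pseudo-differential operator of order $s-m$. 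Displaying the $\gamma=0$ term separately as $\theta J^s f$ gives exactly the asserted decomposition. The hypotheses $m\ge\max\{s,d\}$ are what make the remainder useful: $m\ge s$ forces $s-m\le 0$, so $K_{s-m}$ extends to a bounded operator on $L^2(\mathbb{R}^d)$ by Theorem \ref{Cont.PSDO}, while $m\ge d$ is arranged so that, via Proposition \ref{Kernelpseudo}, the remainder has the kernel and mapping properties needed in the subsequent localization estimates.

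I expect the point requiring the most care to be the second step: verifying that $\psi^\gamma=\partial_\xi^\gamma a\,(1+|\xi|^2)^{-(s-|\gamma|)/2}$ genuinely satisfies the order-zero symbol bounds, and tracking composition orders so that $M_{\partial_x^\gamma\theta}\,\Psi^\gamma J^{s-|\gamma|}$ contributes the \emph{exact} product symbol rather than merely its principal part. Once this bookkeeping is settled, the identity is simply the truncated composition formula of Proposition \ref{PO1} reorganized, and the order $s-m$ of the remainder is read off directly from the expansion.
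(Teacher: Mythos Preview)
Your proof is correct and follows essentially the same route as the paper. The paper begins by writing $J^s(\theta f)=[J^s,\theta]f+\theta J^s f$ and then expands the commutator via Propositions \ref{PO1} and \ref{PO2}, whereas you expand the composition $J^s M_\theta$ directly via Proposition \ref{PO1} and then single out the $\gamma=0$ term; in both cases the order-zero operators $\Psi^\gamma$ are defined by the identical symbol $\langle\xi\rangle^{|\gamma|-s}\partial_\xi^\gamma\langle\xi\rangle^s$, and the remainder is read off from the symbolic calculus in the same way.
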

\begin{proof}
In virtue  of the  identity
\begin{equation*}
J^{s}(\theta f)=[J^s,\theta]f+ \theta J^{s}f,
\end{equation*}
we are reduced to decompose the operator $[J^s,\theta]$. Thus, by employing Propositions \ref{PO1} and \ref{PO2}, we find
\begin{equation*}
[J^{s},\phi]f(x)=\int a_{s}(x,\xi) \widehat{f}(\xi)e^{i x\cdot\xi}\, d\xi,
\end{equation*}
where 
\begin{equation}\label{eqmainthe6}
a_{s}(x,\xi) =\sum_{1\leq |\gamma|\leq m} c_{\gamma} \partial_{\xi}^{\gamma}\langle \xi\rangle^{s}\partial_{x}^{\gamma}\theta(x)+k_{s-m},
\end{equation}
and $k_{s-m} \in \mathcal{S}^{s-m} \subset \mathcal{S}^{0}$. Now, since
\begin{equation*}
\begin{aligned}
\sum_{1\leq |\gamma|\leq m}& c_{\gamma}\partial_{\xi}^{\gamma} (\langle \xi\rangle^{s})\partial_{x}^{\gamma}\theta(x)=\sum_{1\leq |\gamma| \leq m} c_{\gamma}\frac{\partial_{\xi}^{\gamma}(\langle \xi\rangle^{s})}{\langle \xi\rangle^{s-|\gamma|}}(\langle \xi\rangle^{s-|\gamma|}\partial_{x}^{\gamma}\theta(x)),
\end{aligned}
\end{equation*}
we are led to define the pseudo-differential operator $\Psi^{\gamma}$ according to the symbol 
\begin{equation*}
\zeta^{\gamma}(\xi):=\langle \xi\rangle^{|\gamma|-s}\partial_{\xi}^{\gamma}(\langle \xi\rangle^{s}) \in \mathcal{S}^0,
\end{equation*}
for each $0\leq |\gamma|\leq m$. Gathering the preceding results, it follows
\begin{equation*}
\begin{aligned}
\left[J^{s},\theta\right]f=\sum_{1\leq |\gamma|\leq m} c_{\gamma}\partial_{x}^{\gamma} \theta \Psi^{\gamma}(J^{s-\gamma}f)+K_{s-m}f,
\end{aligned}
\end{equation*} 
where $K_{s-m}$ is the pseudo-differential operator with symbol $k_{s-m} \in \mathcal{S}^{s-m}$ defined as in \eqref{eqmainthe6}. This completes the proof.
\end{proof}

The kernel representation of pseudo-differential operators has been applied to obtain some regularity properties for the product of functions with separated supports. In this regard, the following result was deduced in \cite{ArgenisZK}.
\begin{lemma}\label{supportseparetedJ}
Let $\gamma$ be a multi-index and $\Psi_{a}$ a pseudo-differential operator  of order  $m.$ If $g\in L^2(\mathbb{R}^d)$ and $f\in L^p(\mathbb{R}^d)$, $p \in [2,\infty]$ with
    \begin{equation*}
    \dist(\supp(f), \supp(g)) \geq \delta > 0,
    \end{equation*}
then,
    \begin{equation*}
    \|f\partial^{\gamma}_x\Psi_{a} g\|_{L^2} \lesssim \|f\|_{L^p}\|g\|_{L^2}.
    \end{equation*}
\end{lemma}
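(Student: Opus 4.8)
The plan is to reduce the statement to a kernel estimate furnished by Proposition \ref{Kernelpseudo}. First I would observe that $\partial_x^\gamma\Psi_a$ is itself a pseudo-differential operator. Indeed, writing
\begin{equation*}
\partial_{x_j}\Psi_a f(x)=\int e^{2\pi i x\cdot\xi}\big(2\pi i\,\xi_j a(x,\xi)+\partial_{x_j}a(x,\xi)\big)\widehat{f}(\xi)\,d\xi,
\end{equation*}
and noting that $2\pi i\,\xi_j a+\partial_{x_j}a\in\mathcal{S}^{m+1}$, an induction on $|\gamma|$ shows $\partial_x^\gamma\Psi_a=\Psi_b$ for some symbol $b\in\mathcal{S}^{m'}$ with $m'=m+|\gamma|$. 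Thus it suffices to prove $\|f\Psi_b g\|_{L^2}\lesssim\|f\|_{L^p}\|g\|_{L^2}$ for a pseudo-differential operator $\Psi_b$ of order $m'$.

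Next, write $F=\supp(f)$ and $G=\supp(g)$. Since $\dist(F,G)\ge\delta$, every $x\in F$ satisfies $x\notin\supp(g)$, so Proposition \ref{Kernelpseudo}(i) yields the pointwise representation $\Psi_b g(x)=\int k_b(x,x-y)g(y)\,dy$ for $x\in F$, the integral running effectively over $G$. For such $x$ and $y\in G$ we have $|x-y|\ge\delta>0$, and hence the kernel bound of Proposition \ref{Kernelpseudo}(ii), taken with both multi-indices equal to zero, gives for every integer $N$ with $d+m'+N>0$ the uniform estimate $|k_b(x,x-y)|\le A_N|x-y|^{-d-m'-N}$. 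Consequently, for $x\in F$,
\begin{equation*}
|\Psi_b g(x)|\le A_N\int_{|x-y|\ge\delta}|x-y|^{-d-m'-N}|g(y)|\,dy=(K*|g|)(x),\qquad K(z):=A_N|z|^{-d-m'-N}\mathbbm{1}_{\{|z|\ge\delta\}}.
\end{equation*}
The crucial point is that $A_N$ does not depend on $x$, so the genuinely $x$-dependent kernel $k_b(x,x-y)$ is dominated by the fixed convolution kernel $K$.

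Finally, I would combine H\"older's and Young's inequalities. Choose $q$ by $\tfrac1p+\tfrac1q=\tfrac12$; as $p$ runs over $[2,\infty]$ so does $q$. Since $f$ vanishes off $F$, H\"older gives $\|f\Psi_b g\|_{L^2}\le\|f\|_{L^p}\|\Psi_b g\|_{L^q(F)}$. Setting $r=p'$, so that $\tfrac1r=\tfrac12+\tfrac1q$ and $r\in[1,2]$, Young's convolution inequality yields $\|\Psi_b g\|_{L^q(F)}\le\|K*|g|\|_{L^q(\mathbb{R}^d)}\le\|K\|_{L^r}\|g\|_{L^2}$. Because $K$ is supported in $\{|z|\ge\delta\}$ with decay $|z|^{-d-m'-N}$, one has $\|K\|_{L^r}<\infty$ as soon as $r(d+m'+N)>d$, which holds by taking $N$ large enough (depending only on $\delta,m',d,r$); then $\|K\|_{L^r}\lesssim 1$. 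Chaining these bounds gives $\|f\partial_x^\gamma\Psi_a g\|_{L^2}=\|f\Psi_b g\|_{L^2}\lesssim\|f\|_{L^p}\|g\|_{L^2}$, as claimed. The only delicate step is the one already flagged: the kernel $k_b(x,x-y)$ is not of pure convolution type, and the whole argument hinges on replacing it by the $x$-independent majorant $K$ provided by the uniform off-diagonal decay in Proposition \ref{Kernelpseudo}(ii); once this is in hand, the remaining estimates are routine.
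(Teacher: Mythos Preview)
Your proof is correct and follows the standard approach. The paper itself does not prove this lemma, deferring instead to \cite{ArgenisZK}; however, your argument via the kernel representation of Proposition~\ref{Kernelpseudo}, the uniform off-diagonal decay bound, and H\"older--Young is precisely the technique the paper employs when proving the closely related localization estimates in Lemma~\ref{fraclemma2} (see the treatment of \eqref{eqlem2.1}--\eqref{eqlem3} and \eqref{eqlem8}), so your route is entirely in line with the paper's methods.
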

We also require some fractional version of the above lemma. For that reason, we are interested in investigating some interactions between the non-local operators $D^{s}$ and $J^s$. Broadly speaking, by taking advantage of the kernel obtained by the difference $J^s-D^s$, the idea is to transfer localization properties between homogeneous and non-homogeneous derivatives. We refer to \cite{bourgain2014} for an application dealing with the difference $J^s-D^s$. 
\begin{lemma}\label{supportsepareted2}
Let $s\in \mathbb{R}$, $s_1\in (0,1)$.  If $f \in L^{\infty}(\mathbb{R}^d) $ and $g \in L^{2}(\mathbb{R}^d)$ with 
    \begin{equation*}
    \dist(\supp(f),\supp(g)) \geq \delta > 0 
    \end{equation*}
Then, 
\begin{equation*}
\|f D^{s_1}J^{s}g\|_{L^2}\lesssim \|f\|_{L^{\infty}}\|g\|_{L^2}.
\end{equation*}
\end{lemma}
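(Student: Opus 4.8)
The plan is to exploit the fact that $D^{s_1}J^s$ can be written as $D^{s_1}$ composed with $J^s$, and to isolate the interaction by studying the kernel of the operator $D^{s_1}J^s$ restricted to the off-diagonal region dictated by the support separation. Since $f$ and $g$ have supports at distance at least $\delta>0$, on the set where $f$ is supported we have $x\notin\supp(g)$, so the value of $(D^{s_1}J^sg)(x)$ there is given by an integral of $g$ against the kernel $k(x,x-z)$ of $D^{s_1}J^s$ evaluated away from its diagonal singularity. The main point is therefore to produce good pointwise decay estimates for this kernel when $|x-z|\ge\delta$, and then to bound the resulting integral operator on $L^2$.

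The delicate issue is that $D^{s_1}J^s$ is \emph{not} a classical pseudo-differential operator in the sense of Proposition \ref{Kernelpseudo}, because the symbol $|\xi|^{s_1}\langle\xi\rangle^{s}$ fails to be smooth at the origin $\xi=0$ (the homogeneous factor $|\xi|^{s_1}$ is only $C^{0}$ there). To circumvent this, I would split the symbol using a smooth cutoff $\chi\in C_c^\infty(\mathbb{R}^d)$ equal to $1$ near the origin, writing
\begin{equation*}
|\xi|^{s_1}\langle\xi\rangle^{s}=\chi(\xi)|\xi|^{s_1}\langle\xi\rangle^{s}+(1-\chi(\xi))|\xi|^{s_1}\langle\xi\rangle^{s}=:m_0(\xi)+m_\infty(\xi).
\end{equation*}
The high-frequency piece $m_\infty$ is smooth and lies in $\mathcal{S}^{s+s_1}$, so the associated operator $\Psi_{m_\infty}$ falls directly under Proposition \ref{Kernelpseudo}; its kernel obeys the decay bound in part (ii), and Lemma \ref{supportseparetedJ} (with $\gamma=0$) then gives $\|f\Psi_{m_\infty}g\|_{L^2}\lesssim\|f\|_{L^\infty}\|g\|_{L^2}$ directly, using the support separation and $f\in L^\infty\subset L^p$ on the compact interaction region. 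The low-frequency piece $m_0$ is a compactly supported multiplier; since $\chi(\xi)|\xi|^{s_1}\langle\xi\rangle^s$ is an integrable, compactly supported function with $s_1\in(0,1)$ ensuring local integrability of $|\xi|^{s_1}$ near zero, its inverse Fourier transform $K_0$ is a bounded, smooth, rapidly decaying kernel, and convolution against it maps $L^2$ to $L^2$ with $\|\Psi_{m_0}g\|_{L^2}\lesssim\|g\|_{L^2}$; multiplying by $f\in L^\infty$ only costs $\|f\|_{L^\infty}$.

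The step I expect to be the main obstacle is the rigorous handling of the low-frequency singularity: one must verify that restricting to $s_1\in(0,1)$ genuinely guarantees $m_0\in L^1_\xi$ (hence $K_0\in L^\infty_x$) and that the remaining multiplier bound is uniform, so that the hypothesis on $s_1$ is used in an essential way rather than being cosmetic. An alternative cleaner route, which I would prefer to present, is to absorb the whole difficulty into the kernel of $D^{s_1}(J^s-D^s)$ plus the known behavior of the Riesz potential: write $D^{s_1}J^s=D^{s_1}D^s+D^{s_1}(J^s-D^s)$, observe that $J^s-D^s$ has a symbol that is smoother at infinity by two orders (so $D^{s_1}(J^s-D^s)$ is a nicer operator), and treat $D^{s_1+s}$ directly via its explicit convolution kernel $c|x|^{-d-s_1-s}$ away from the diagonal, which is integrable against the separated supports. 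In either approach the decisive estimate is the off-diagonal kernel decay $|k(x,x-z)|\lesssim|x-z|^{-d-s-s_1}$ for $|x-z|\ge\delta$, after which a Schur test or Young's inequality combined with $\|f\|_{L^\infty}$ yields the claim. This completes the outline.
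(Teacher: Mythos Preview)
Your first approach via the frequency cutoff is correct and is a genuinely different route from the paper's. One small inaccuracy: the inverse Fourier transform $K_0$ of $m_0=\chi(\xi)|\xi|^{s_1}\langle\xi\rangle^s$ is \emph{not} rapidly decaying, since $m_0$ fails to be smooth at $\xi=0$. But you do not need this: since $s_1>0$ forces $m_0\in L^\infty_\xi$, Plancherel gives $\|\Psi_{m_0}g\|_{L^2}\leq\|m_0\|_{L^\infty}\|g\|_{L^2}$, and then $\|f\Psi_{m_0}g\|_{L^2}\leq\|f\|_{L^\infty}\|\Psi_{m_0}g\|_{L^2}$ without invoking the support separation on this piece at all. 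The high-frequency piece $m_\infty$ is indeed in $\mathcal{S}^{s+s_1}$, so Lemma~\ref{supportseparetedJ} applies directly. This is arguably cleaner than the paper's argument, which instead writes $|\xi|^{s_1}\langle\xi\rangle^s=\langle\xi\rangle^{s+s_1}-\langle\xi\rangle^s\big(\langle\xi\rangle^{s_1}-|\xi|^{s_1}\big)$ and expands the second term via the binomial series $(1-\langle\xi\rangle^{-2})^{s_1/2}=\sum_j\binom{s_1/2}{j}(-1)^j\langle\xi\rangle^{-2j}$; a finite truncation yields a genuine pseudo-differential operator handled by Lemma~\ref{supportseparetedJ}, while the tail is an absolutely convergent series of Bessel smoothings $J^{-(2j-s-s_1)}$, each bounded on $L^2$. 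Your cutoff achieves the same separation of the origin singularity from the high-frequency $\Psi$DO behaviour in one stroke.

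Your preferred second route, however, has a real gap. The decomposition $D^{s_1}J^s=D^{s_1+s}+D^{s_1}(J^s-D^s)$ breaks down when $s+s_1\leq 0$: the off-diagonal Riesz kernel of $D^{s+s_1}$ is $c|x|^{-d-s-s_1}$, which \emph{grows} as $|x|\to\infty$ for $s+s_1<0$, so it is not integrable on $\{|x|\geq\delta\}$ and neither Young's inequality nor a Schur test applies (recall $f\in L^\infty$ and $g\in L^2$ are not assumed compactly supported). The correction term $D^{s_1}(J^s-D^s)$ inherits the same low-frequency singularity, since for $s<0$ the symbol $\langle\xi\rangle^s-|\xi|^s$ also blows up at the origin. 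Since the lemma is stated for arbitrary $s\in\mathbb{R}$, this route would need a frequency cutoff near $\xi=0$ to be salvaged, at which point it collapses into your first approach. Present the cutoff argument.
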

\begin{proof}
We begin by choosing an integer $M>1,$ such that $M>(s+s_1)/2$. By employing the binomial expansion, we get
\begin{equation*}
\begin{aligned}
\langle \xi \rangle^{s}\big(\langle \xi \rangle^{s_1}-|\xi|^{s_1}\big)&=\langle \xi \rangle^{s+s_1}(1-(1-\langle  \xi\rangle^{-2})^{s_1/2}) \\
&=\sum_{j=1}^{M-1} \binom{s_1/2}{j}\frac{(-1)^{j+1}\langle \xi \rangle^{s+s_1}}{\langle \xi \rangle^{2j}}+\sum_{j=M}^{\infty} \binom{s_1/2}{j}\frac{(-1)^{j+1}}{\langle \xi \rangle^{2j-s-s_1}} \\
&=:k_{M}(\xi)+\sum_{j=M}^{\infty} \binom{s_1/2}{j}\frac{(-1)^{j+1}}{\langle \xi \rangle^{2j-s-s_1}},
\end{aligned}
\end{equation*}
where given that $s_1>0$, the above series converges absolutely. Thus, we set the operator $K_{M}$ by
\begin{equation*}
K_{M}f(x)=\int k_{M}(\xi)\widehat{f}(\xi) e^{2\pi i x\cdot \xi}\, d\xi.
\end{equation*}
Consequently, we deduce the identity
\begin{equation}\label{eqlemm1}
\begin{aligned}
f D^{s_1}J^{s}g=fJ^{s+s_1}g-fK_{M}g-\sum_{j=M}^{\infty}\binom{s_1/2}{j}(-1)^{j+1}(fJ^{2j-s-s_1}g).
\end{aligned}
\end{equation}
We are led to estimate each factor on the right-hand side of the above equality. Since the Bessel potential satisfies $\|J^{-s'}g\|_{L^p}\leq \|g\|_{L^{p}},$ for any $1\leq p \leq \infty,$ and $s'>0$, we deduce from our choice of $M$,
\begin{equation*}
\begin{aligned}
\|\sum_{j=M}^{\infty}\binom{s_1/2}{j}(-1)^{j+1}(fJ^{2j-s-s_1}g)\|_{L^2} &\leq \sum_{j=M}^{\infty}\Big|\binom{s_1/2}{j}\Big|\|f\|_{L^{\infty}}\|J^{2j-s-s_1}g\|_{L^2} \\
&\lesssim \|f\|_{L^{\infty}}\|g\|_{L^2}.
\end{aligned}
\end{equation*}
Next, by the hypothesis between the supports of $f$ and $g$, the estimate for $fJ^{s+s_1}g$ is a consequence of Lemma \ref{supportseparetedJ}. Likewise, noticing that $K_{M}$ defines a pseudo-differential operator of order $s+s_1-2$, the required estimate is again a consequence of Lemma \ref{supportseparetedJ}. Summarizing,
\begin{equation*}
\begin{aligned}
\|f J^{s+s_1}g\|_{L^2}+\|fK_{M}g\|_{L^2}\lesssim \|f\|_{L^{\infty}}\|g\|_{L^2}.
\end{aligned} 
\end{equation*}
Going back to identity \eqref{eqlemm1}, we gather the previous results to complete the proof of the lemma. 
\end{proof}

The same arguments in the proof of Lemma \ref{supportsepareted2} provide the following generalization.

\begin{cor}\label{supportsepareted3}
Let $\Psi_a$ be a pseudo-differential operator and $s_1\in [0,1)$.  If $f \in L^{\infty}(\mathbb{R}^d) $ and $g \in L^{2}(\mathbb{R}^d)$ are such that
    \begin{equation*}
    \dist(\supp(f),\supp(g)) \geq \delta > 0 
    \end{equation*}
Then, 
\begin{equation*}
\|f D^{s_1}\Psi_a g\|_{L^2}\lesssim \|f\|_{L^{\infty}}\|g\|_{L^2}.
\end{equation*}
\end{cor}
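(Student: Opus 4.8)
The plan is to mirror the proof of Lemma \ref{supportsepareted2} almost verbatim, replacing the Bessel potential $J^s$ by the general pseudo-differential operator $\Psi_a$. First I would fix the symbol $a\in\mathcal{S}^m$ of $\Psi_a$ and choose an integer $M>1$ with $M>(m+s_1)/2$. The key algebraic step is the binomial expansion that splits the symbol $|\xi|^{s_1}$ relative to $\langle\xi\rangle^{s_1}$: writing $|\xi|^{s_1}=\langle\xi\rangle^{s_1}(1-\langle\xi\rangle^{-2})^{s_1/2}$ and expanding, one obtains $\langle\xi\rangle^{s_1}-|\xi|^{s_1}=k_M(\xi)+\sum_{j\ge M}\binom{s_1/2}{j}(-1)^{j+1}\langle\xi\rangle^{-(2j-s_1)}$, where $k_M$ is the finite partial sum (a symbol of order $s_1-2$) and the tail converges absolutely since $s_1>0$.

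Next I would insert this decomposition into $D^{s_1}\Psi_a g$ to produce the analogue of identity \eqref{eqlemm1}:
\begin{equation*}
f D^{s_1}\Psi_a g = f J^{s_1}\Psi_a g - f K_M \Psi_a g -\sum_{j=M}^{\infty}\binom{s_1/2}{j}(-1)^{j+1}\,f J^{-(2j-s_1)}\Psi_a g,
\end{equation*}
where $K_M$ is the operator with symbol $k_M$. The three pieces are then estimated exactly as before. For the tail sum, since each $J^{-(2j-s_1)}$ maps $L^2$ to $L^2$ with norm bounded independent of $j$ (indeed $\|J^{-s'}h\|_{L^2}\le\|h\|_{L^2}$ for $s'>0$), and $\Psi_a$ composed with $J^{-(2j-s_1)}$ is, for $j\ge M$, a pseudo-differential operator of order $m-(2j-s_1)\le 0$ hence $L^2$-bounded, the factor $f\in L^\infty$ gives $\|f\,\cdot\|_{L^2}\lesssim\|f\|_{L^\infty}\|g\|_{L^2}$, and the binomial coefficients are summable; one must only verify the operator norms are uniformly bounded in $j$, which follows since higher order means better decay.

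For the first two terms I would invoke the support-separation hypothesis together with Lemma \ref{supportseparetedJ}. The composition $J^{s_1}\Psi_a$ is a pseudo-differential operator of order $m+s_1$ by Proposition \ref{PO1}, and $K_M\Psi_a$ is one of order $m+s_1-2$; both therefore fall under Lemma \ref{supportseparetedJ} (with $\gamma=0$), yielding
\begin{equation*}
\|f J^{s_1}\Psi_a g\|_{L^2}+\|f K_M\Psi_a g\|_{L^2}\lesssim \|f\|_{L^\infty}\|g\|_{L^2}.
\end{equation*}
The only point requiring genuine care, and thus the main obstacle, is the first term: when $s_1\in[0,1)$ one has $s_1$ possibly equal to $0$, and more seriously $J^{s_1}\Psi_a$ is a \emph{positive}-order operator, so Lemma \ref{supportseparetedJ} must be applied with $f\in L^p$ for $p\in[2,\infty]$ (here $p=\infty$) and one should confirm its hypotheses cover operators of arbitrary positive order $m+s_1$ rather than order zero. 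Assuming Lemma \ref{supportseparetedJ} as stated (which permits a pseudo-differential operator of any order $m$), this is immediate; collecting the three bounds and returning to the displayed identity completes the proof.
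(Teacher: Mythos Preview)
Your proposal is correct and follows precisely the route the paper indicates: the authors state that the corollary follows by ``the same arguments in the proof of Lemma \ref{supportsepareted2},'' and your write-up reproduces those arguments with $J^s$ replaced by the general $\Psi_a$, composing the resulting operators via Proposition \ref{PO1} and invoking Lemma \ref{supportseparetedJ} for the pseudo-differential pieces. Your handling of the tail's uniform $L^2$-boundedness can be made explicit by writing $J^{-(2j-s_1)}\Psi_a = J^{-(2j-2M)}\bigl(J^{-(2M-s_1)}\Psi_a\bigr)$ and noting the first factor has operator norm at most $1$ while the second is a fixed operator of nonpositive order; otherwise there is nothing to add.
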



\subsection{Localized Regularity}

This subsection introduces the main tool required to deduce Theorem \ref{mainTheo}. Mainly, the idea is to provide formulas connecting the propagation of regularity effect in different domains. Estimates of this kind were previously presented in \cite{keLinaPOnV} and \cite{ArgenisZK}. A contribution of the present work is the deduction of (I) and (II) below, which connect the operators $D^s$ and $J^s$ in different regions. 
 Additionally, we emphasize that similar estimates (III) and (IV) were previously determined in the work of \cite{ArgenisZK}, but here, we perform some minor changes to consider functions $f\in H^{-m}(\mathbb{R}^d)$, $m\geq 0$.

\begin{lemma}\label{fraclemma2}
Let $f \in H^{-m}(\mathbb{R}^d)$ for some integer $m\geq 0$, and $\theta_1, \theta_2\in C^{\infty}(\mathbb{R}^d)\setminus\{0\}$ such that $0\leq \theta_1,\theta_2 \leq 1$,
\begin{equation}\label{assumseparsupp}
\dist(\supp(1-\theta_1),\supp(\theta_2))\geq \delta>0,
\end{equation}
and satisfying $\partial^{\gamma}\theta_1, \partial^{\gamma}\theta_2 \in L^{\infty}(\mathbb{R}^d)$ for all multi-index $\gamma$.
\begin{itemize}
\item[(I)] If $0\leq \beta <2$ and $\theta_1 f , \theta_1 D^{\beta}f \in L^{2}(\mathbb{R}^d)$, then
\begin{equation*}
\|\theta_2 J^{\beta}f\|_{L^2} \lesssim \|\theta_1f \|_{L^2}+\|\theta_1 D^{\beta}f \|_{L^2}+\|J^{-m}f\|_{L^2},
\end{equation*}
so that $\theta_2 J^{\beta}f  \in L^{2}(\mathbb{R}^d)$.
\item[(II)] If $0\leq \beta <2$, and $\theta_1 J^{\beta}f  \in L^{2}(\mathbb{R}^d)$, then  
\begin{equation*}
\|\theta_2 f\|_{L^2}+\|\theta_2 D^{\beta}f\|_{L^2}\lesssim \|\theta_1 J^{\beta}f\|_{L^2}+\|J^{-m}f\|_{L^2},
\end{equation*}
and so $\theta_2 f, \theta_2 D^{\beta}f \in L^{2}(\mathbb{R}^d)$.
\item[(III)] If $s>0$, $0\leq r \leq s$, and $\theta_1 J^{s}f  \in L^{2}(\mathbb{R}^d)$, then  
\begin{equation*}
\|\theta_2 J^{r}f\|_{L^2}\lesssim \|\theta_1 J^{s}f\|_{L^2}+\|J^{-m}f\|_{L^2},
\end{equation*}
and so $\theta_2 J^{r}f \in L^{2}(\mathbb{R}^d)$.
\item[(IV)] If $s > 0$ and $\theta_1J^sf \in L^2(\mathbb{R}^d)$, then
\begin{equation*}
\| J^s(\theta_2 f)\|_{L^{2}} \lesssim \|\theta_1J^sf\|_{L^2}+\|J^{-m}f\|_{L^2},
\end{equation*}
that is, $J^s(\theta_2 f)\in L^2(\mathbb{R}^d)$.
\end{itemize}
\end{lemma}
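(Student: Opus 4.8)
The plan is to treat all four parts through a single mechanism, exploiting that the separation hypothesis \eqref{assumseparsupp} forces $\theta_1\equiv 1$ on $\supp(\theta_2)$, so that $\theta_2=\theta_1\theta_2$ with $0\le\theta_2\le 1$. In each case the strategy is: (a) rewrite the quantity to be estimated as a Fourier multiplier (or pseudo-differential operator) applied to the \emph{controlled} object; (b) split the argument of that operator by inserting $1=\theta_1+(1-\theta_1)$; (c) bound the near piece (carrying $\theta_1$) directly in $L^2$ using that the operator has order $\le 0$, hence is $L^2$-bounded by Theorem \ref{Cont.PSDO}; and (d) absorb the far piece (carrying $1-\theta_1$) into $\|J^{-m}f\|_{L^2}$. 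For step (d) I will use the $H^{-m}$-version of Lemma \ref{supportseparetedJ}: if $\Psi$ is a pseudo-differential operator of any order and $\dist(\supp\phi,\supp\psi)\ge\delta>0$ with $\partial^\gamma\phi,\partial^\gamma\psi\in L^\infty$, then (writing $M_h$ for multiplication by $h$) the operator $M_\phi\,\Psi\,M_\psi$ has a smooth, rapidly decaying off-diagonal kernel by Proposition \ref{Kernelpseudo}(ii), and hence maps $H^{-N}\to L^2$ for every $N\ge 0$. This is exactly the ``minor change'' announced before the statement.

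For (III) and (IV) I write $f=J^{-s}(J^sf)$ and work with $g:=J^sf$. In (III), $\theta_2 J^rf=\theta_2 J^{r-s}g$ with $r-s\le 0$; after splitting $g=\theta_1g+(1-\theta_1)g$, the near term satisfies $\|\theta_2 J^{r-s}(\theta_1 g)\|_{L^2}\le\|J^{r-s}\|_{L^2\to L^2}\|\theta_1 g\|_{L^2}\lesssim\|\theta_1 J^sf\|_{L^2}$, while the far term $M_{\theta_2}J^{r-s}M_{1-\theta_1}g$ is smoothing and, since $\|g\|_{H^{-m-s}}=\|J^{-m}f\|_{L^2}$, is bounded by $\|J^{-m}f\|_{L^2}$. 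Part (IV) is the same computation after the expansion of Proposition \ref{PO3}: $J^s(\theta_2f)=\theta_2J^sf+\sum_{1\le|\gamma|\le m'}c_\gamma\,\partial^\gamma\theta_2\,\Psi^\gamma(J^{s-|\gamma|}f)+K_{s-m'}f$, where $\theta_2J^sf=\theta_1\theta_2J^sf$ is controlled by $\|\theta_1J^sf\|_{L^2}$, the remainder $K_{s-m'}f$ costs $\|J^{-m}f\|_{L^2}$ once $m'\ge s+m$ is chosen, and each $\partial^\gamma\theta_2\,\Psi^\gamma J^{-|\gamma|}g$ (the operator $\Psi^\gamma J^{-|\gamma|}$ has order $-|\gamma|\le 0$) is split as in (III).

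The genuinely new parts are (I) and (II), which interchange $D^\beta$ and $J^\beta$. The key point is to understand the operators $L:=J^\beta-D^\beta$ (for (I)) and $P:=D^\beta J^{-\beta}$ (for (II)), whose symbols $\langle\xi\rangle^\beta-|\xi|^\beta$ and $(|\xi|/\langle\xi\rangle)^\beta$ are smooth away from the origin but fail to be smooth at $\xi=0$ because of the factor $|\xi|^\beta$. I therefore fix $\chi_0\in C_c^\infty$ equal to $1$ near $0$ and decompose each symbol into a high-frequency part $(1-\chi_0)(\cdots)$ and a low-frequency part $\chi_0(\cdots)$. For $L$ the high part lies in $\mathcal{S}^{\beta-2}$ — this is where $\beta<2$ enters, guaranteeing negative order and hence $L^2$-boundedness on the near piece — while for $P$ the high part lies in $\mathcal{S}^0$; in both cases the low part is a compactly-supported (in frequency) multiplier mapping $H^{-m}\to L^2$, so it contributes only $\|J^{-m}f\|_{L^2}$. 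Then (I) follows from $\theta_2J^\beta f=\theta_2D^\beta f+\theta_2Lf$, where $\theta_2D^\beta f=\theta_1\theta_2D^\beta f$ is controlled by $\|\theta_1D^\beta f\|_{L^2}$ and $\theta_2Lf$ is handled by the high/low splitting followed by the near/far splitting of (c)–(d), the near high term costing $\|\theta_1 f\|_{L^2}$. Similarly (II) follows by writing $\theta_2f=\theta_2J^{-\beta}(J^\beta f)$ and $\theta_2D^\beta f=\theta_2P(J^\beta f)$ and running the same decompositions on $g:=J^\beta f$.

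The main obstacle is precisely the low-frequency singularity of $|\xi|^\beta$ at the origin, which prevents $J^\beta-D^\beta$ and $D^\beta J^{-\beta}$ from being classical pseudo-differential operators and thus blocks a direct appeal to the symbolic calculus of Propositions \ref{PO1}–\ref{PO3}. Isolating this singularity in a frequency-localized smoothing piece, absorbable into $\|J^{-m}f\|_{L^2}$, is the crux; the rest is routine once one verifies that every ``far'' interaction between $\theta_2$ and $1-\theta_1$ is genuinely smoothing (Proposition \ref{Kernelpseudo}) and that the surviving operators acting on the ``near'' piece have order $\le 0$ (so Theorem \ref{Cont.PSDO} applies), which is exactly what the constraint $\beta<2$ secures in part (I).
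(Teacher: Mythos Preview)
Your argument is correct, and for parts (III) and (IV) it coincides with the paper's proof essentially line by line: write $f=J^{-s}(J^sf)$, insert $1=\theta_1+(1-\theta_1)$, bound the near piece by $L^2$-continuity of the order-$\le 0$ operator, and use kernel decay (Proposition \ref{Kernelpseudo}) plus the integration-by-parts trick $f=J^{2m'}J^{-2m'}f$ to push the far piece into $\|J^{-m}f\|_{L^2}$. For (IV) the paper likewise invokes Proposition \ref{PO3} and then reduces each lower-order term to the (III) mechanism.

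The genuine methodological difference is in (I)--(II). You isolate the origin singularity of $|\xi|^\beta$ by a smooth frequency cutoff $\chi_0$, obtaining a high-frequency piece that is a bona fide symbol in $\mathcal{S}^{\beta-2}$ (resp.\ $\mathcal{S}^0$) and a low-frequency piece that is a bounded compactly-supported multiplier, hence maps $H^{-m}\to L^2$ directly. The paper instead expands $\langle\xi\rangle^\beta-|\xi|^\beta=\langle\xi\rangle^\beta\big(1-(1-\langle\xi\rangle^{-2})^{\beta/2}\big)$ via the binomial series, truncates after $M-1$ terms to get a genuine pseudo-differential operator $K_{1,M}$ of order $\beta-2$ (handled exactly as your high piece), and treats the tail $K_{2,M}$ by explicit Bessel-kernel estimates and Young's inequality. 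Your cutoff approach is cleaner and avoids the Bessel-potential computation; the paper's series approach is more explicit and gives a quantitative decomposition (each term $\langle\xi\rangle^{-(2j-\beta)}$) that makes the order count transparent without appealing to symbol estimates for $(1-\chi_0)(\langle\xi\rangle^\beta-|\xi|^\beta)$. Both routes exploit $\beta<2$ at exactly the same place, namely to ensure the ``main'' part of $J^\beta-D^\beta$ has order $\le 0$.
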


\begin{proof}
We first deduce (I). We begin by analyzing the difference between $J^{\beta}-D^{\beta}$ as it was done in the proof of Lemma \ref{supportsepareted2}. For this purpose, let us consider some integer $M>1$ fixed such that $M> 2m+\beta/2$, where $m$ is such that $f\in H^{-m}(\mathbb{R}^d)$. By means of the binomial expansion, we have
\begin{equation}\label{eqlem1}
\begin{aligned}
\langle \xi \rangle^{\beta}-|\xi|^{\beta}&=\langle \xi \rangle^{\beta}(1-(1-\langle  \xi\rangle^{-2})^{\beta/2}) \\
&=\sum_{j=1}^{M-1} \binom{\beta/2}{j}\frac{(-1)^{j+1}}{\langle \xi \rangle^{2j-\beta}}+\sum_{j=M}^{\infty} \binom{\beta/2}{j}\frac{(-1)^{j+1}}{\langle \xi \rangle^{2j-\beta}} \\
&=:k_{1,M}(\xi)+k_{2,M}(\xi),
\end{aligned}
\end{equation}
where given that $\beta>0$, the above series converges absolutely. Thus, we define the operators $K_{j,M}$, $j=1,2$ by
\begin{equation*}
K_{j,M}f(x)=\int k_{j,M}(\xi)\widehat{f}(\xi) e^{i x\cdot \xi}\, d\xi, \hspace{0.4cm} j=1,2.
\end{equation*}
In virtue of  \eqref{eqlem1}, we write
\begin{equation*}
\theta_2 J^{\beta}f =\theta_2 D^{\beta} f +\theta_2 K_{1,M}f+\theta_2K_{2,M}f.
\end{equation*}
Consequently, to deduce (I), we are reduced to prove $\theta_2 K_{j,M}f \in L^2(\mathbb{R}^d)$, $j=1,2$. To deal with the estimate concerning the first operator $K_{1,M}$, we perform the following decomposition
\begin{equation*}
\begin{aligned}
\theta_2 K_{1,M}f =  \theta_2 K_{1,M}(\theta_1 f )+\theta_2 K_{1,M}((1- \theta_1)f).
\end{aligned}
\end{equation*}
Since $0\leq \beta <2$, we have that $k_{1,M} \in \mathcal{S}^{0}$, i.e., $K_{1,M}$ determines a pseudo-differential operator of order zero. Thus, Theorem \ref{Cont.PSDO} yields
\begin{equation}\label{eqlem2}
\|\theta_2 K_{1,M}(\theta_1 f ) \|_{L^2} \lesssim \|\theta_2\|_{L^{\infty}}\|\theta_1 f \|_{L^2}.
\end{equation} 
Next, denoting by $\widetilde{k}_{1,M} \in C^{\infty}(\mathbb{R}^d\times(\mathbb{R}^d\setminus\{0\}))$ the kernel associated to $K_{1,M}$ determined by Proposition \ref{Kernelpseudo}, by hypothesis \eqref{assumseparsupp} and integrating by parts we find
\begin{equation}\label{eqlem2.1}
\begin{aligned}
 \theta_2 (x)& K_{1,M}((1- \theta_1)f)(x) \\
&=\theta_2(x) \int \widetilde{k}_{1,M}(x,x-y)(1- \theta_1(y))J^{2m}J^{-2m}f(y)\, dy \\
&= \sum_{0\leq |\gamma_1|+|\gamma_2|\leq 2m} c_{\gamma_1,\gamma_2}\theta_2(x)\\
&\hspace{1cm}\times \int_{|x-y|\geq \delta} (\partial^{\gamma_1}\widetilde{k}_{1,M})(x,x-y)\partial^{\gamma_2}(1- \theta_1(y))J^{-2m}f(y)\, dy,
\end{aligned}
\end{equation}
for some constant $c_{\gamma_1,\gamma_2}$ with $0\leq |\gamma_1|+|\gamma_2|\leq 2m$, which are not relevant for our considerations. The preceding estimate, Proposition \ref{Kernelpseudo} (ii) for some integer $N>0$ fixed, and Young's inequality allow us to deduce
\begin{equation}\label{eqlem3}
\begin{aligned}
\|\theta_2 & K_{1,M}((1- \theta_1)f) \|_{L^2} \\
&\lesssim \|\theta_2\|_{L^{\infty}}\big(\sum_{0\leq |\gamma| \leq 2m}\|\partial^{\gamma}(1-\theta_1)\|_{L^{\infty}}\big)\sum_{l=0}^{2m} \Big\|\frac{\chi_{\{|\cdot|\geq \delta\}}}{|\cdot|^{d+l+N}}\ast |J^{-2m}f| \Big\|_{L^2}\\
& \lesssim_{\theta_1,\theta_2} \|J^{-m}f\|_{L^2},
\end{aligned}
\end{equation}
where we have also employed $\|J^{-2m}f\|_{L^2}\leq \|J^{-m}f\|_{L^2}$.  Collecting 
\eqref{eqlem2} and \eqref{eqlem3}, we conclude
\begin{equation*}
\|\theta_2 K_{1,M}f \|_{L^2} \lesssim \|\theta_1 f\|_{L^2}+ \|J^{-m}f\|_{L^2}.
\end{equation*}
On the other hand, since $K_{2,M}$ does not determine a pseudo-differential operator, we must employ a different reasoning to bound this operator. Instead, we write
\begin{equation*}
\begin{aligned}
K_{2,M}f=\sum_{j=M}^{\infty} \binom{\beta/2}{j} (-1)^{j+1} G_{2j-\beta}\ast f,
\end{aligned}
\end{equation*}
where $G_{\delta}$, $\delta>0$, denotes the Bessel kernel (see \cite{AronszBessel,steinSingular}) defined by
\begin{equation*}
\begin{aligned}
G_{\delta}(x)=c_{\delta} \int_0^{\infty} e^{-\pi |x|^2/w}e^{-w/4 \pi}w^{(-d+\delta)/2}\, \frac{dw}{w},
\end{aligned}
\end{equation*}
for some constant $c_{\delta}>0$. Additionally, we recall the estimate
\begin{equation}\label{eqlem4}
|\partial^{\gamma}G_{2}(x)|\leq c_{2}\big(G_{2}(x)+G_{1}(x)\big),
\end{equation}
which holds for all multi-index $\gamma$ of order $|\gamma|=1$. Now, writing $f=J^{-2m}J^{2m}f$, by properties between convolution and derivatives, it is not difficult to deduce
\begin{equation}\label{eqlem5}
\begin{aligned}
K_{2,M}f=\sum_{j=M}^{\infty} \binom{\beta/2}{j} (-1)^{j+1} \sum_{0\leq |\gamma|\leq 2m} c_{\gamma}\big((\partial^{\gamma}G_{2j-\beta})\ast J^{-2m}f\big).
\end{aligned}
\end{equation}
To estimate the above equality, we decompose each multi-index $\gamma$ with $0\leq |\gamma|\leq 2m$ as a sum of $2m$ multi-indexes of order less or equal than $1$, that is, $\gamma=\sum_{l=1}^{2m}\gamma_{l}$, where $0\leq |\gamma_l| \leq 1$. From this, we have
\begin{equation*}
\partial^{\gamma}G_{2j-\beta}=G_{2j-4m-\beta}\ast \underbrace{ \partial^{\gamma_1}G_2\ast \cdots \ast \partial^{\gamma_{2m}}G_2}_{2m-\text{times}},
\end{equation*}
for each $j\geq M> 2m-\beta/2$, and $0\leq |\gamma| \leq 2m$. Then, for these set of indexes, \eqref{eqlem4}, and the fact that $\|G_{\delta}\|_{L^1}=1$, for all $\delta>0$, imply
\begin{equation*}
\begin{aligned}
\|\partial^{\gamma}G_{2j-\beta} \ast J^{-2m}f\|_{L^{2}}\leq (2c_2)^{2m}\|J^{-2m}f\|_{L^2}.
\end{aligned}
\end{equation*}
Plugging the previous estimate in \eqref{eqlem5} reveals
\begin{equation}\label{eqlem5.1}
\|K_{2,M}f\|_{L^2}\lesssim \sum_{j=0}^{\infty} \left|\binom{\beta/2}{j}\right| \|J^{-2m}f\|_{L^2} \lesssim \|J^{-m}f\|_{L^2}.
\end{equation}
In particular, this shows $\theta_2 K_{2,M}f  \in L^{2}(\mathbb{R}^d),$ and in consequence the proof of (I) is complete.

Next, we deduce (II). 
Writing $f=J^{-\beta}(J^{\beta} f)$, we have 
\begin{equation}\label{eqlem6}
\theta_2 f= \theta_2 J^{-\beta}(J^{\beta}f) =\theta_2 J^{-\beta}(\theta_1 J^{\beta}f) +\theta_2 J^{-\beta}((1-\theta_1)J^{\beta}f).
\end{equation}
The first term of the above equality satisfies
\begin{equation}\label{eqlem7}
\|\theta_2 J^{-\beta}(\theta_1 J^{\beta}f)\|_{L^2}\lesssim \|\theta_2\|_{L^{\infty}}\|\theta_1 J^{\beta}f\|_{L^2}.
\end{equation}
Now, the remaining estimate for the r.h.s of \eqref{eqlem6} is obtained by arguing exactly as in \eqref{eqlem2.1}. Indeed, letting $\widetilde{m}$ be an integer such that $2\widetilde{m}\geq m+ \beta$, and  $q_{\beta}\in C^{\infty}(\mathbb{R}^d\times(\mathbb{R}^d\setminus\{0\}))$ be the kernel associated to $J^{-\beta}$, by (ii) in Proposition \ref{Kernelpseudo} for some integer $N>0$, by using \eqref{assumseparsupp} and integrating by parts, we find
\begin{equation}\label{eqlem8}
\begin{aligned}
\|\theta_2 & J^{-\beta}((1-\theta_1)J^{\beta}f) \|_{L^2} \\
&\lesssim \sum_{0\leq |\gamma_1|+|\gamma_2|\leq 2\widetilde{m}}\|\theta_2(x)\big(\partial^{\gamma_1}q_{\beta}(x,\cdot)\ast(\partial^{\gamma_2}(1-\theta_1)J^{-2\widetilde{m}+\beta}f)\big)(x)\|_{L^2} \\
&\lesssim \|\theta_2\|_{L^{\infty}}\big(\sum_{0\leq |\gamma| \leq 2m}\|\partial^{\gamma}(1-\theta_1)\|_{L^{\infty}}\big)\sum_{l=0}^{2m} \Big\|\frac{\chi_{\{|\cdot|\geq \delta\}}}{|\cdot|^{d+l+N}}\ast |J^{-2\widetilde{m}+\beta}f| \Big\|_{L^2}\\
& \lesssim_{\theta_1,\theta_2} \|J^{-m}f\|_{L^2}.
\end{aligned}
\end{equation}
Gathering \eqref{eqlem6}-\eqref{eqlem8}, we conclude that $\theta_2 f\in L^{2}(\mathbb{R}^d)$.

On the other hand, following the same arguments in the proof of (i), we have
\begin{equation}\label{eqlem8.1}
\theta_2 D^{\beta}f=\theta_2 J^{\beta} f-\theta_2 K_{1,M}f-\theta_2 K_{2,M}f,
\end{equation}
where $M>1$ is a fixed integer number such that $M> 2m+\beta/2$, and the operators $K_{j,M}$ are defined as above according to $k_{j,m}$ given by \eqref{eqlem1} for all $j=1,2$. Notice that \eqref{eqlem5.1} establishes the desired estimate for $\theta_2 K_{2,M}f $.

To control $\theta_2 K_{1,M}f$, once again we set $J^{-\beta}J^{\beta}f$, then denoting by $\widetilde{K}_{1,M}$ the pseudo-differential operator given by the composition $K_{1,M} J^{-\beta}$ (see Proposition \ref{PO1}), it is seen that
\begin{equation*}
\begin{aligned}
\theta_2 K_{1,M}f=\theta_2 K_{1,M} J^{-\beta} (J^{\beta}f)=\theta_2 \widetilde{K}_{1,M}(\theta_1 J^{\beta}f) +\theta_2\widetilde{K}_{1,M}((1-\theta_1)J^{\beta}f).
\end{aligned}
\end{equation*}
Consequently, the previous equality is bounded by the same estimates concerning the  r.h.s of \eqref{eqlem6}. To avoid repetitions, we omit the details. From this, it follows $\theta_2 K_{1,M}f\in L^{2}(\mathbb{R}^2)$, and so, by equation \eqref{eqlem8.1}, $\theta_2 D^{\beta}f  \in L^{2}(\mathbb{R}^2)$ which establishes (II).

To deduce (III), we decompose
\begin{equation*}
\theta_2 J^r f=\theta_2 J^{-(s-r)}J^sf=\theta_2 J^{-(s-r)}(\theta_1 J^sf)+\theta_2J^{-(s-r)}((1-\theta_1)J^sf).
\end{equation*}
The above identity and similar considerations as in \eqref{eqlem6} yield the deduction of (III).

Finally, we deal with (IV). Recalling that $f\in H^{-m}(\mathbb{R}^d)$, we consider an integer $m_1>\max\{s+m,d\}$ such that by Proposition \ref{PO3} it is seen that
\begin{equation}\label{eqlem9}
J^{s}(f\theta_2)=\sum_{1\leq |\gamma|\leq m_1} c_{\gamma}\partial_{x}^{\gamma} \theta_2 \, \Psi^{\gamma}(J^{s-|\gamma|}f)+\theta_2 J^{s}f +K_{s-m_1}f,
\end{equation}
where $\Psi^{\gamma}$ is a given pseudo-differential operator of order zero for each $1\leq |\gamma|\leq m_1$ and $K_{s-m_1}$ is of order $s-m_1$. Clearly, $\|\theta_2 J^{s}f\|_{L^2}\lesssim \|\theta_1 J^{s}f\|_{L^2}$, thus we focus on the remaining parts in \eqref{eqlem9}. We first estimate
\begin{equation*}
\begin{aligned}
\|K_{s-m_1}f\|_{L^2}=\|K_{s-m_1}J^{m}J^{-m}f\|_{L^2}\lesssim \|J^{-m}f\|_{L^2}.
\end{aligned}
\end{equation*}
Now, for each multi-index $1\leq |\gamma|\leq m_1$, we write
\begin{equation*}
\begin{aligned}
\partial_{x}^{\gamma} \theta_2 \, \Psi^{\gamma}(J^{s-|\gamma|}f)=\partial_{x}^{\gamma} \theta_2 \, \big(\Psi^{\gamma}J^{-|\gamma|}\big)(\theta_1 J^{s}f)+\partial_{x}^{\gamma} \theta_2\big(\Psi^{\gamma}J^{-|\gamma|}\big)((1-\theta_1) J^{s}f).
\end{aligned}
\end{equation*}
By recurrent arguments using that $\Psi^{\gamma}J^{-|\gamma|}$ is a pseudo-differential operator of order zero and the assumption on the supports, on one hand we have
\begin{equation*}
\begin{aligned}
\|\partial_{x}^{\gamma} \theta_2\big(\Psi^{\gamma}J^{-|\gamma|}\big)((1-\theta_1) J^{s}f)\|_{L^2}\lesssim \|J^{-m}f\|_{L^2},
\end{aligned}
\end{equation*}
while on the other it is seen that
\begin{equation*}
\begin{aligned}
\|\partial_{x}^{\gamma} \theta_2 \, \big(\Psi^{\gamma}J^{-|\gamma|}\big)(\theta_1 J^{s}f)\|_{L^2}\lesssim \|\theta_1 J^{s}f\|_{L^2}.
\end{aligned}
\end{equation*}
Gathering the previous results we complete the deduction of (IV).
\end{proof}


\subsection{Weighted functions}\label{subweigh}

In this part, we introduce the cutoff functions to be employed in our arguments. This class of functions was first used in \cite{IsazLP,keLinaPOnV}. For the sake of brevity, we will only present those properties required for our considerations. For a more detailed discussion, see Isaza, Linares and Ponce \cite{IsazLP}. 

Given $\epsilon>0$ and $b\geq 5\epsilon$, we define the family of functions
\begin{equation*}
    \chi_{\epsilon,b}, \phi_{\epsilon,b},\tilde{\phi}_{\epsilon,b}, \psi_{\epsilon}, \eta_{\epsilon,b}\in C^{\infty}(\mathbb{R}),
\end{equation*}
satisfying the following properties:
\begin{itemize}
    \item[(i)] $\chi_{\epsilon,b}' \geq 0$,
    \item[(ii)] 
    $\chi_{\epsilon,b}(x)=\left\{\begin{aligned}
     0, \, \, x\leq \epsilon \\ 1, \, \, x\geq b,
    \end{aligned}
     \right.$,
    \item[(iii)]\label{chi'property} $\chi_{\epsilon,b}'(x) \geq \frac{1}{10(b-\epsilon)}\mathbbm{1}_{[2\epsilon,b-2\epsilon]}(x)$,
    \item[(iv)]\label{lowerboudnchi} $\chi_{\epsilon,b}(x) \geq \frac{1}{2} \frac{\epsilon}{b-3\epsilon}$, whenever $x\in (3\epsilon,\infty)$,
    \item[(v)] $\supp(\chi'_{\epsilon,b})\subset [\epsilon,b]$,
    \item[(vi)] \label{phiproperty} $\supp(\phi_{\epsilon,b}),\supp(\widetilde{\phi}_{\epsilon,b})\subset [\epsilon/4,b]$,
    \item[(vii)] $\phi_{\epsilon}(x)=\widetilde{\phi}_{\epsilon,b}(x)=1, x\in [\epsilon/2,\epsilon]$,
    \item[(viii)] $\supp(\psi_{\epsilon})\subset (-\infty,\epsilon/2]$.
    \item[(ix)]  Given $x\in \mathbb{R}$, we have the following partitions of unity
    \begin{equation}\label{phidecomp}
        \chi_{\epsilon,b}(x)+\phi_{\epsilon,b}(x)+\psi_{\epsilon}(x)=1
    \end{equation}
    and
    \begin{equation}\label{phidecomp2}
        \chi_{\epsilon,b}^2(x)+\widetilde{\phi_{\epsilon,b}}^{2}(x)+\psi_{\epsilon}(x)=1.
    \end{equation}
\end{itemize}
By a slight abuse of notation, when it is required, we shall assume that the above functions act in two variables as follows $\chi_{\epsilon,b}(x,y):=\chi_{\epsilon,b}(x)$, similarly for the other weighted functions introduced above.


\section{Kato's smoothing effect}\label{Katosect} 

We are in the condition to establish the following Kato's smoothing effect for solutions of \eqref{BOZK}. 

\begin{prop}\label{KATOSM}
Let $0\leq \alpha < 1$. Consider $s>s_{\alpha}=(17-2\alpha)/12$, and $u_0\in H^s(\mathbb{R}^2)$. Then the corresponding solution $u\in C([0,T]; H^{s}(\mathbb{R}^2))$ of the IVP \eqref{BOZK} with initial data $u_0$ determined by Theorem \ref{imprwellpos} satisfies for any $R>0$, $T>0$ and $0\leq r \leq s$ that
\begin{equation}\label{eqsmooth} 
    D_x^{\frac{\alpha+1}{2}}A^r u, \, \,  \mathcal{H}_x D_x^{\frac{\alpha+1}{2}}A^r u, \, \, \partial_y A^r u \, \in L^{2}((-R,R)_x\times \mathbb{R}_y \times (0,T)),
\end{equation}
where $A^r$ is any among the operators $J^r, J_x^r, J_y^r, D^r, D_x^r$ and $D^r_y$. 
\end{prop}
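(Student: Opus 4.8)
The plan is to prove Kato's smoothing effect by a weighted energy estimate applied to the equation \eqref{BOZK}, localizing in the $x$-variable with one of the cutoff functions from Subsection \ref{subweigh}. The starting observation is that by Lemma \ref{fraclemma2} parts (III) and (IV), together with the various equivalences among the operators $J^r, J_x^r, J_y^r, D^r, D_x^r, D_y^r$ on bounded $x$-regions, it suffices to prove the conclusion for a single convenient choice, say $A^r = J_x^r$; all the other operators are then recovered by transferring localized regularity. Likewise, since the claimed integrability is over a bounded slab $(-R,R)_x\times\mathbb{R}_y\times(0,T)$, I only need lower bounds on the dissipation coming from a cutoff $\chi=\chi(x)$ that is increasing and equals $1$ on $(-R,R)$.

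First I would consider smooth solutions (the general case follows by the LWP theory of Theorem \ref{imprwellpos} and a standard approximation/limiting argument) and compute
\[
\frac{d}{dt}\int (J_x^r u)^2\,\chi\,dxdy
\]
using the equation $\partial_t u = D_x^{\alpha+1}u_x - u_{xyy} - uu_x$. The dispersive terms $D_x^{\alpha+1}u_x$ and $u_{xyy}$ are the source of smoothing: after applying $J_x^r$, multiplying by $J_x^r u\,\chi$ and integrating by parts in $x$ (and in $y$ for the transverse term), the commutator of the antisymmetric operators with the multiplier $\chi$ produces, as principal contributions, the positive quadratic forms
\[
\int (D_x^{\frac{\alpha+1}{2}}J_x^r u)^2\,\chi'\,dxdy
\quad\text{and}\quad
\int (\partial_y J_x^r u)^2\,\chi'\,dxdy,
\]
up to lower-order and smoothing-order-$0$ errors. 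The key algebraic device here is the Ginibre--Velo commutator expansion of Proposition \ref{valueofn}: writing $[-\mathcal H_x D_x^{\alpha+1},\chi]$ via \eqref{GVexpression}, its leading term yields exactly the positive local smoothing of order $\frac{\alpha+1}{2}$ in $x$, while the remainder $R_n(\alpha+1)$ is controlled in $L^2$ by \eqref{Restestimate}. Since $\chi'\geq c>0$ on $(-R,R)$ by property (iii), integrating in $t\in(0,T)$ and using $u\in C([0,T];H^s)$ with $r\leq s$ to bound the boundary-in-time terms gives the $L^2$-in-space-time control of $D_x^{\frac{\alpha+1}{2}}J_x^r u$ and $\partial_y J_x^r u$.

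The error terms I must absorb are of three kinds: the commutator remainders from the dispersive terms, handled by Propositions \ref{valueofn} and \ref{CalderonComGU} and the pseudo-differential calculus (Propositions \ref{PO1}--\ref{PO3}); the nonlinear contribution from $uu_x$, handled by the fractional Leibniz and commutator estimates of Lemmas \ref{conmKP}, \ref{nonhest} and \ref{fraLR} together with the $L^1_TW^{1,\infty}$-control of $u$ guaranteed by the class \eqref{equclass}; and the time integrals of the $H^s$-norm of $u$, which are finite by Theorem \ref{imprwellpos}. \textbf{The main obstacle} I anticipate is the precise treatment of the nonlocal dispersion $-\mathcal H_x D_x^{\alpha+1}u_x$ interacting with $J_x^r$ and the cutoff: unlike the local ZK case ($\alpha=1$, pure $\partial_x^3$), the operator $D_x^{\alpha+1}$ is nonlocal and does not commute cleanly with $\chi$, so I must extract the positive-definite principal smoothing term of order $\frac{\alpha+1}{2}$ while showing every lower-order commutator piece is a bounded operator (order $0$) that can be closed against $\|u\|_{H^s}$; this is exactly where the Ginibre--Velo expansion and the localization formulas of Lemma \ref{fraclemma2} do the essential work.
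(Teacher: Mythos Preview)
Your overall strategy---a weighted energy estimate with a monotone cutoff in $x$, extracting the positive smoothing terms via the Ginibre--Velo expansion and bounding the nonlinear term with the $L^1_TW^{1,\infty}$ control from Theorem \ref{imprwellpos}---is exactly what the paper does. However, there are a few concrete slips you should fix.

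First, your cutoff setup is inconsistent: you say $\chi$ equals $1$ on $(-R,R)$ and also that $\chi'\geq c>0$ on $(-R,R)$. These are incompatible. What you actually need is a smooth increasing weight $\psi$ with $\psi'\geq 0$ compactly supported and $\psi'\geq c>0$ on $(-R,R)$; the smoothing is captured on $\supp(\psi')$, not where the weight is flat. Property (iii) in Subsection \ref{subweigh} gives the lower bound on the \emph{derivative} on a strict subinterval of the transition region, not where the cutoff equals $1$.

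Second, the relevant commutator is $[-\mathcal H_xD_x^{\alpha+2},\psi]$, not $[-\mathcal H_xD_x^{\alpha+1},\chi]$, since $\partial_xD_x^{\alpha+1}=-\mathcal H_xD_x^{\alpha+2}$. With $a=\alpha+2$, $b=0$, $n=0$ in Proposition \ref{valueofn}, the principal term in \eqref{GVexpression} indeed yields smoothing of order $\frac{\alpha+1}{2}$.

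Third, your reduction to a single $A^r=J_x^r$ via Lemma \ref{fraclemma2} is not what the paper does, and it is not clear it works in all cases: for instance, passing from localized control of $D_x^{\frac{\alpha+1}{2}}J_x^r u$ to that of $D_x^{\frac{\alpha+1}{2}}J^r u$ requires handling full two-dimensional Bessel potentials with cutoffs only in $x$, which Lemma \ref{fraclemma2} does not directly provide. The paper instead repeats the energy computation verbatim for each choice of $A^r$, noting that the dispersive part is identical and only the nonlinear commutator estimate changes (Lemma \ref{conmKP} for $J^r,J_x^r,J_y^r$; Lemma \ref{nonhest} for $D^r,D_x^r,D_y^r$). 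This is both simpler and safer.
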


\begin{proof} 
We first consider the case $A^r=J^r$ for fixed $0\leq r \leq s$. The following computations can be justified approximating with smooth solutions of \eqref{BOZK} and taking the limit in our estimates. Thus, we will perform our considerations assuming the required regularity on the solution. We let $\psi \in C^{\infty}(\mathbb{R})$ with $\psi'\geq 0$, and $\psi'$ compact supported. Applying the operator $J^r$ to \eqref{BOZK}, then multiplying the resulting expression by $J^r u \psi$ and integrating in space, we find
\begin{equation}\label{eqpre1}
\begin{aligned}
\frac{1}{2}\frac{d}{dt}\int |J^r u|^2 &\psi(x) \, dx dy-\int (\partial_x D_x^{\alpha+1} J^r u) J^r u \psi(x) \, dx dy \\
&+\frac{1}{2}\int|\partial_y J^r u|^2\psi'(x) \, dx dy +\int J^r(u u_x)J^r u \psi(x) \, dx dy=0.
\end{aligned}
\end{equation}
To deal with the second term on the left-hand side of \eqref{eqpre1}, by writing $\partial_x D_x^{\alpha+1}=-\mathcal{H}_x D_x^{\alpha+2}$, we apply the expansion \eqref{commutremind} with $a=\alpha+2$, $b=0$ and $n=0$ to deduce
\begin{equation}\label{equsmooth1}
    \begin{aligned}
    &-\int (\partial_x D_x^{\alpha+1} J^r u) J^r u \psi \, dx dy \\
    &\quad =\frac{1}{2}\int J^r u[-\mathcal{H}_x D_x^{\alpha+2},\psi]J^r u  \, dx dy \\
    &\quad =\frac{(\alpha+2)}{4}\int |D_x^{\frac{\alpha+1}{2}}J^r u|^2 \psi' \, dx dy+\frac{(\alpha+2)}{4}\int |\mathcal{H}_x D_x^{\frac{\alpha+1}{2}}J^r u|^2 \psi' \, dx dy\\ 
    &\qquad+\frac{1}{2} \int J^r u R_0(\alpha+2)J^r u  \, dx dy,
    \end{aligned}
\end{equation}
where $\left\|R_0(\alpha+2)\right\|_{L^2_{xy}\to L^2_{xy}}\leq c=c(\psi')$.

On the other hand, to bound the third term on the left-hand side of \eqref{eqpre1}, we integrate by parts to obtain
\begin{equation*}
    \begin{aligned}
    &\int J^r(u u_x)J^r u \psi \, dx dy \\
    &\hspace{0.5cm}=\int [J^r, u]u_x J^ru \psi \, dx dy+\frac{1}{2}\int u \partial_x| J^r u|^2\psi \, dx dy \\
    &\hspace{0.5cm}=\int [J^r, u]u_x J^ru \psi \, dx dy-\frac{1}{2}\int u_x | J^r u|^2\psi \, dx dy-\frac{1}{2}\int u | J^r u|^2\psi' \, dx dy.
    \end{aligned}
\end{equation*}
Consequently, the preceding equality, the fact that $r\leq s$ and the Kato-Ponce inequality \eqref{conmKPI} allow us to deduce
\begin{equation}\label{eqpre2}
    \begin{aligned}
    &\left|\int J^r(u u_x)J^r u \psi \, dx dy\right| \\
    &\hspace{0.5cm}\lesssim \left\|[J^r,u]u_x\right\|_{L^2_{xy}}\left\|J^ru\right\|_{L^2_{xy}}+(\left\|u\right\|_{L^{\infty}_{xy}}+\left\|\partial_x u\right\|_{L^{\infty}_{xy}})\left\|J^r u\right\|_{L^2_{xy}}^2 \\
    &\hspace{0.5cm}\lesssim (\left\|u\right\|_{L^{\infty}_{xy}}+\left\|\nabla  u\right\|_{L^{\infty}_{xy}})\left\|J^s u\right\|_{L^2_{xy}}^2.
    \end{aligned}
\end{equation}
We remark that the implicit constant above depends on $\left\|\psi\right\|_{L^\infty}$ and $\left\|\psi'\right\|_{L^{\infty}}$. Thus, gathering the above estimates yields
\begin{equation}\label{eqpre3}
\begin{aligned}
\frac{1}{2}\frac{d}{dt}\int |J^r u|^2 \psi \, dx dy&+\frac{(\alpha+2)}{4}\int |D_x^{\frac{\alpha+1}{2}}J^r u|^2 \psi' \, dx dy \\
&+\frac{(\alpha+2)}{4}\int |\mathcal{H}_x D_x^{\frac{\alpha+1}{2}}J^r u|^2 \psi' \, dx dy \\
&+\frac{1}{2}\int|\partial_y J^r u|^2\psi' \, dx dy  \\
&\leq (1+\left\|u\right\|_{L^{\infty}_{xy}}+\left\|\nabla  u\right\|_{L^{\infty}_{xy}})\left\|J^s u\right\|_{L^2_{xy}}^2.
\end{aligned}
\end{equation}
Noticing that Theorem \ref{imprwellpos} establishes that $u\in L^{1}([0,T];W^{1,\infty}(\mathbb{R}^2))$, we can apply Gronwall's lemma in \eqref{eqpre3}, obtaining the desired conclusion for the case $A^r=J^r$. 

The estimates for the remaining cases $A^r$ follow by similar reasoning as above, the only modification required concerns the estimate for the nonlinear term, i.e., \eqref{eqpre3}. By implementing \eqref{conmKPI} on each variable, we still control the cases $A^r=J^r_x,J^r_y$ (the former cases holds by the assumption $\partial_y u\in L^{1}([0,T];L^{\infty}(\mathbb{R}^2))$). Whereas, Lemma \ref{nonhest}  allows us to deal with  $A^r=D^r, D_x^r,D_y^r$. The proof is complete.
\end{proof}
\section{Proof of Theorem \ref{mainTheo}}\label{mainTheosection}

Our analysis follows the technique introduced in \cite{IsazLP} (see also \cite{keLinaPOnV,2019P1Argenis,2019P2Argenis,ArgenisZK}), so that our starting point will be basically to obtain weighted energy estimate by localizing the regions in $\mathbb{R}^{2}$ where the information concerning the regularity is available. 

By translation, we may set $x_0=0$. Additionally, we shall assume that the solution $u$ of the IVP \eqref{BOZK} 
 has the required regularity to justify our estimates. At the end, the desired conclusion follows by a limit process employing smooth solutions and our estimates. Therefore, by applying directly the operator $J^{\bar{s}}_x$ to the equation in \eqref{BOZK}, followed by a multiplication by $J^{\bar{s}}_xu(x,y) \chi_{\epsilon,b}^{2}(x+vt),$ that combined with integration by parts allow us to deduce the identity
\begin{equation}\label{identnonl}
    \begin{split} 
    &\frac{1}{2}\frac{d}{dt}\int (J^{\bar{s}}_x u)^{2}\chi_{\epsilon,b}^{2}\, dx dy \,\underbrace{-\frac{v}{2}\int (J^{\bar{s}}_x u)^2(\chi_{\epsilon,b}^{2})'dxdy }_{A_{1}(t)}  \\
    &\underbrace{-\int(\partial_{x} D_{x}^{1+\alpha}J^{\bar{s}}_xu)J^{\bar{s}}_x u\chi_{\epsilon,b}^{2}\, dxdy}_{A_{2}(t)}+\int \partial_x\partial_{y}^2J^{\bar{s}}_xu J^{\bar{s}}_xu \chi_{\epsilon,b}^2\, dx dy 
   \\
            &\underbrace{+\int J^{\bar{s}}_x(u\partial_{x}u)J^{\bar{s}}_x u\chi_{\epsilon,b}^{2}\, dx dy}_{A_{3}(t)} = 0.
    \end{split}
 \end{equation}
We notice that integrating by part yields
\begin{equation*}
\begin{aligned}
\int \partial_x\partial_{y}^2J^{\bar{s}}_xu J^{\bar{s}}_xu \chi_{\epsilon,b}^2\, dx dy=\int (\partial_{y}J^{\bar{s}}_xu)^2\chi_{\epsilon,b}\chi_{\epsilon,b}'\, dx dy \geq 0.
\end{aligned}
\end{equation*}
Consequently, we only need to estimate $A_1(t)$, $A_2(t)$ and $A_3(t)$ determined by \eqref{identnonl}. To simplify the exposition, the preceding differential inequality and the corresponding terms $A_j(t)$, $j=1,2,3$ will be employed for different values $\bar{s}$ previously fixed. We notice that our objective is bounding equation \eqref{identnonl} corresponding to the case $\bar{s}=s$, whenever $s>s_{\alpha}$. 

Since the estimate for $A_2(t)$ follows by rather general arguments independent of $\bar{s}$, for the sake of brevity, we develop this estimate in the following lemma. 

\begin{lemma}\label{estimatA2} 
Let $\chi_{\epsilon,b},\phi_{\epsilon,b}$ and $\psi_{\epsilon}$ defined as in Subsection \ref{subweigh}, satisfying \eqref{phidecomp}. Then there exist some positive constants $c_0, c_1$ such that
\begin{equation*}
\begin{aligned}
A_2(t)=Sm_1(t)+Sm_2(t)+R_{A_2}(t),
\end{aligned}
\end{equation*}
where
\begin{equation*} 
\begin{aligned}
&Sm_1(t)=\frac{\alpha+2}{2}\big(\|(\chi_{\epsilon,b}\chi_{\epsilon,b}')^{1/2}D_x^{\frac{\alpha+1}{2}}J^{\bar{s}}_x u(t)\|_{L^2_{xy}}^2 \\
&\hspace{4cm}+ \|(\chi_{\epsilon,b}\chi_{\epsilon,b}')^{1/2}\mathcal{H}_xD_x^{\frac{\alpha+1}{2}}J^{\bar{s}}_x u(t)\|_{L^2_{xy}}^2 \big),\\
&Sm_2(t):= \frac{\alpha+2}{2}\big(\|(\chi_{\epsilon,b}\chi_{\epsilon,b}')^{1/2}D_x^{\frac{\alpha+1}{2}}J^{\bar{s}}_x(u\psi_{\epsilon})(t)\|_{L^2_{xy}}^2 \\
&\hspace{4cm}+ \|(\chi_{\epsilon,b}\chi_{\epsilon,b}')^{1/2}\mathcal{H}_xD_x^{\frac{\alpha+1}{2}}J^{\bar{s}}_x (u\psi_{\epsilon})(t)\|_{L^2_{xy}}^2 \big).
\end{aligned}
\end{equation*}
and
\begin{equation}\label{eqlemma1} 
\begin{aligned}
|R_{A_2}(t)|\leq & \frac{1}{4}Sm_1(t)+c_0\|u\|_{H^{s_{\alpha}^{+}}}^2 \\
&+ c_1\sum_{0\leq j \leq \max\{\bar{s}-s_{\alpha},0\}} \|\chi_{\epsilon,b}J^{\bar{s}-j}_xu\|_{L^2_{xy}}^2+\|J^{\bar{s}-j}_x(u \phi_{\epsilon,b})\|_{L^2_{xy}}^2. 
\end{aligned}
\end{equation}
\end{lemma}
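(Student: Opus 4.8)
The plan is to localise, at the level of the weight $\chi_{\epsilon,b}^{2}$, the same computation that already produced the Kato smoothing estimate \eqref{equsmooth1}, and then to control the resulting nonlocal error operator. First I would rewrite the dispersive contribution $A_2(t)$. Writing $\partial_x D_x^{\alpha+1}=-\mathcal{H}_xD_x^{\alpha+2}$ and using that $-\mathcal{H}_xD_x^{\alpha+2}$ is skew-adjoint and commutes with the $x$-multiplier $J^{\bar s}_x$, exactly as in the step leading to \eqref{equsmooth1} in Proposition \ref{KATOSM} (now with $\psi$ replaced by $\chi_{\epsilon,b}^{2}$ and $J^r$ by $J^{\bar s}_x$), one obtains
\[
A_2(t)=\frac12\int J^{\bar s}_x u\,\bigl[-\mathcal{H}_xD_x^{\alpha+2},\chi_{\epsilon,b}^2\bigr]\,J^{\bar s}_x u\,dx\,dy .
\]

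Next I would invoke the Ginibre--Velo commutator expansion \eqref{commutremind} with $a=\alpha+2$ (so $\mu=(\alpha+1)/2$), $n=0$ and $h=\chi_{\epsilon,b}^{2}$; note that $h'=(\chi_{\epsilon,b}^{2})'=2\chi_{\epsilon,b}\chi_{\epsilon,b}'$ is compactly supported in $[\epsilon,b]$ and that $\widehat{D_x^{\alpha+2}h}\in L^1_\xi$, so Proposition \ref{valueofn} applies. Since $P_0(\alpha+2)=(\alpha+2)D_x^{(\alpha+1)/2}(2\chi_{\epsilon,b}\chi_{\epsilon,b}')D_x^{(\alpha+1)/2}$, the principal part $\tfrac14\int J^{\bar s}_x u\,(P_0-HP_0H)J^{\bar s}_x u$ reduces, after using self-adjointness of $D_x^{(\alpha+1)/2}$ and skew-adjointness of $\mathcal{H}_x$, to precisely the positive term $Sm_1(t)$ with its prefactor $\tfrac{\alpha+2}{2}$. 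This leaves
\[
A_2(t)=Sm_1(t)+\frac12\int J^{\bar s}_x u\,R_0(\alpha+2)\,J^{\bar s}_x u\,dx\,dy ,
\]
so that both $Sm_2(t)$ and $R_{A_2}(t)$ must be extracted from this single error integral, i.e.\ $R_{A_2}(t)=\tfrac12\int J^{\bar s}_x u\,R_0(\alpha+2)J^{\bar s}_x u-Sm_2(t)$.

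The heart of the proof, and the \emph{main obstacle}, is to bound the error integral by localised quantities. By Proposition \ref{valueofn}, $R_0(\alpha+2)$ is $L^2$-bounded (and, since $\alpha<1$, compact) of negative order $\alpha-1$, but its mere boundedness gives only the useless global bound $\lesssim\|J^{\bar s}_x u\|_{L^2}^2$, since we do \emph{not} control $u$ in $H^{\bar s}$ globally. The key point I would exploit is that $R_0(\alpha+2)$ is built entirely from $\chi_{\epsilon,b}^{2}$ and its derivatives, which vary only on the channel $[\epsilon,b]$, so its Schwartz kernel is concentrated there and decays off-diagonal. Decomposing $u=u\chi_{\epsilon,b}+u\phi_{\epsilon,b}+u\psi_\epsilon$ via the partition of unity \eqref{phidecomp}, I would treat the three pieces separately. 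Commuting the cutoffs past $J^{\bar s}_x$ and $R_0(\alpha+2)$ as in Proposition \ref{PO3} and Lemma \ref{fraclemma2}, the channel and right-hand pieces carry the available $H^{\bar s}$ regularity and, after transferring between homogeneous and non-homogeneous $x$-derivatives and between regions, are bounded by the local norms $\|\chi_{\epsilon,b}J^{\bar s-j}_x u\|_{L^2}$ and $\|J^{\bar s-j}_x(u\phi_{\epsilon,b})\|_{L^2}$; the derivative expansions descend in integer steps, which accounts both for the sum over $0\le j\le\max\{\bar s-s_\alpha,0\}$ and, once $\bar s-j$ reaches $s_\alpha$, for the global reservoir $c_0\|u\|_{H^{s_\alpha^+}}^2$. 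The far-left piece $u\psi_\epsilon$, whose support is separated from $\operatorname{supp}(\chi_{\epsilon,b}\chi_{\epsilon,b}')$ by a distance $\gtrsim\epsilon$, is the one packaged as the retained correction $Sm_2(t)$, which the separated-support estimate Lemma \ref{supportsepareted2} (with $s_1=(\alpha+1)/2\in(0,1)$) shows is itself $\lesssim\|u\|_{H^{s_\alpha^+}}^2$; the part of the error of the same shape as the principal smoothing is reabsorbed into $\tfrac14 Sm_1(t)$ by Cauchy--Schwarz.

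I expect the delicate bookkeeping to be the off-diagonal kernel estimate that makes the localisation of the nonlocal $R_0(\alpha+2)$ rigorous: one cannot simply restrict it to the channel, and must instead integrate by parts against its decaying kernel, exactly in the spirit of \eqref{eqlem2.1}--\eqref{eqlem3} and Proposition \ref{Kernelpseudo}, to convert the far-field contributions into the negative-Sobolev terms $\|J^{-m}(\cdot)\|_{L^2}\lesssim\|u\|_{H^{s_\alpha^+}}$. Tracking precisely which contributions are genuinely absorbed into $\tfrac14 Sm_1(t)$ versus merely dominated by the localised lower-order norms is where care is needed, but no idea beyond the reduction above together with the localisation lemmas of Section \ref{preliminar} is required.
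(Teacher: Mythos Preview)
Your overall strategy (partition $u=u\chi_{\epsilon,b}+u\phi_{\epsilon,b}+u\psi_\epsilon$, keep the $(\chi,\phi)$-pieces via the $L^2$-boundedness of $R_0(\alpha+2)$, and treat the $\psi$-pieces via separated supports) is correct in spirit, but the order in which you carry it out creates a genuine obstruction. You apply the Ginibre--Velo expansion \emph{first}, obtaining $A_2=Sm_1+\tfrac12\int J^{\bar s}_xu\,R_0(\alpha+2)J^{\bar s}_xu$, and only \emph{then} decompose. This forces you to localise $R_0(\alpha+2)$ itself, and you propose to do so via kernel decay ``in the spirit of Proposition~\ref{Kernelpseudo}''. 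That proposition, however, is for pseudo-differential operators, whereas $R_0(\alpha+2)=[HD_x^{\alpha+2},\chi_{\epsilon,b}^2]-\tfrac12(P_0-HP_0H)$ carries the Hilbert transform $H=-\mathcal{H}_x$, whose kernel decays only like $|x-y|^{-1}$; so neither the full commutator nor $HP_0H$ enjoys the rapid off-diagonal decay you invoke, and the $L^2$-boundedness of $R_0$ alone gives nothing useful against $J^{\bar s}_x(u\psi_\epsilon)$, which is \emph{not} in $L^2$ globally.

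The paper reverses the order: it decomposes $J^{\bar s}_xu$ via \eqref{phidecomp} \emph{before} touching the commutator, writing $A_2$ as a symmetric bilinear form $\sigma(\cdot,\cdot)$ in the three pieces. The Ginibre--Velo expansion \eqref{eqpropa1} is applied \emph{only} to the $(\chi+\phi)$--$(\chi+\phi)$ block; there the inputs are already localised, so the mere $L^2$-boundedness \eqref{boundGV} of $R_0$ suffices and produces $\|J^{\bar s}_x(u\chi_{\epsilon,b})\|_{L^2}^2+\|J^{\bar s}_x(u\phi_{\epsilon,b})\|_{L^2}^2$, which Proposition~\ref{PO3} then reduces (iteratively) to the sum in \eqref{eqlemma1}. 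In this block the identity $\chi_{\epsilon,b}+\phi_{\epsilon,b}=1-\psi_\epsilon$ regroups the \emph{principal} part into $Sm_1+Sm_2$ minus a cross term $\widetilde A_{2,1}$, and it is this cross term (not something inside $R_0$) that is split by Young's inequality into $\tfrac14Sm_1$ plus separated-support remainders handled by Lemma~\ref{supportsepareted2} and Proposition~\ref{CalderonComGU}. The $\psi$-blocks $A_{2,4},A_{2,5},A_{2,6}$ are never run through Ginibre--Velo at all: the full commutator $[-\mathcal{H}_xD_x^{\alpha+2},\chi_{\epsilon,b}^2]$ is opened directly, and since $\chi_{\epsilon,b}^2$ (and its derivatives) have support separated from $\psi_\epsilon$, Corollary~\ref{supportsepareted3} and the commutator estimate Proposition~\ref{CalderonComGU} for $[\mathcal{H}_x,\cdot]$ give $\lesssim\|u\|_{L^2}^2$ outright. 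In short, $Sm_2$ and the $\tfrac14Sm_1$ absorption both come from the \emph{principal} part of the expansion on the localised block, not from the remainder $R_0$; and the $\psi$-pieces bypass $R_0$ entirely.
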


\begin{proof}
By writing $\partial_x=-\mathcal{H}_xD_x$  and using \eqref{phidecomp}, we decompose the estimate for $A_2$ as follows
\begin{equation}\label{eqpropa0}
\begin{split}
    A_2(t) =& \frac{1}{2}\int_{\mathbb{R}^{2}} J^{\bar{s}}_xu\left[D_x^{\alpha + 1}\partial_x, \chi_{\epsilon,b}^2\right]J^{\bar{s}}_xu\, dx dy \\
     =& \frac{1}{2}\int_{\mathbb{R}^{2}} J^{\bar{s}}_x\left( (u\chi_{\epsilon,b}) + (u\phi_{\epsilon,b}) + (u\psi_{\epsilon}) \right)\\
     &\hspace{1cm}\times \left[-\mathcal{H}_xD_x^{\alpha + 2}, \chi_{\epsilon,b}^2\right]J^{\bar{s}}_x\left( (u\chi_{\epsilon,b}) + (u\phi_{\epsilon,b}) + (u\psi_{\epsilon}) \right)\,dx\,dy \\
=&\sigma(J^{\bar{s}}_x( u\chi_{\epsilon,b}),J^{\bar{s}}_x(u\chi_{\epsilon,b}))+2\sigma(J^{\bar{s}}_x(u\chi_{\epsilon,b}),J^{\bar{s}}_x(u\phi_{\epsilon,b}))\\
&+\sigma(J^{\bar{s}}_x(u\phi_{\epsilon,b}),J^{\bar{s}}_x(u\phi_{\epsilon,b}))+2\sigma(J^{\bar{s}}_x(u\chi_{\epsilon,b}),J^{\bar{s}}_x(u\psi_{\epsilon}))\\
&+2\sigma(J^{\bar{s}}_x(u\phi_{\epsilon,b}),J^{\bar{s}}_x( u\psi_{\epsilon}))+\sigma(J^{\bar{s}}_x(u\psi_{\epsilon}),J^{\bar{s}}_x( u\psi_{\epsilon})) \\
=:&A_{2,1}(t)+A_{2,2}(t)+A_{2,3}(t)+A_{2,4}(t)+A_{2,5}(t)+A_{2,6}(t),
\end{split}
\end{equation}
where we have set 
\begin{equation*}
\sigma(f,g)=\frac{1}{2}\int f\left[-\mathcal{H}_xD_x^{\alpha + 2}, \chi_{\epsilon,b}^2\right]g \, dx dy,
\end{equation*}
and we have used
\begin{equation*}
\sigma(f,g)=\sigma(g,f).
\end{equation*}
To deduce a convenient factorization for the operator $\sigma$, we employ Proposition \ref{valueofn} with $a=\alpha+2$, $b=0$ and $n=0$ to get 
\begin{equation}\label{eqpropa1}
\begin{aligned}
\sigma(f,g)=&\frac{\alpha+2}{2}\int (D_x^{\frac{\alpha+1}{2}}f)(D_x^{\frac{\alpha+1}{2}}g)\chi_{\epsilon,b}\chi'_{\epsilon,b}\, dxdy\\
&+\frac{\alpha+2}{2}\int (\mathcal{H}_xD_x^{\frac{\alpha+1}{2}}f)(\mathcal{H}_xD_x^{\frac{\alpha+1}{2}}g)\chi_{\epsilon,b}\chi'_{\epsilon,b}\, dxdy \\
&+\frac{1}{2}\int f R_0(\alpha+2)g\, dxdy,
\end{aligned}
\end{equation}
where the operator $R_0(\alpha+2)$ satisfies
\begin{equation}\label{boundGV}
\|R_0(\alpha+2)\|_{L^2_{xy}\rightarrow L^2_{xy}} \lesssim \|\widehat{D^{\alpha+2}_x(\chi_{\epsilon,b}^2)}(\xi)\|_{L^1_{\xi}}.
\end{equation}
Consequently, we divide the analysis of \eqref{eqpropa0} according to those cases where the above decomposition leads bounded expressions, and in those where we can estimate directly the commutator defining $\sigma(\cdot,\cdot)$. Indeed, \eqref{eqpropa1}, and the fact that $\chi_{\epsilon,b}+\phi_{\epsilon,b}=1-\psi_{\epsilon}$ allow us to write
\begin{equation*}
\begin{aligned}
A_{2,1}&(t)+A_{2,2}(t)+A_{2,3}(t)\\
=&\frac{\alpha+2}{2} \int \Big( \big(D_x^{\frac{\alpha+1}{2}}J^{\bar{s}}_x u\big)^2  +  \big(\mathcal{H}_xD_x^{\frac{\alpha+1}{2}}J^{\bar{s}}_x u\big)^2 \Big)  \chi_{\epsilon,b}\chi_{\epsilon,b}'\, dx dy \\
& +\frac{\alpha+2}{2} \int \Big( \big( D_x^{\frac{\alpha+1}{2}}J^{\bar{s}}_x  (u\psi_{\epsilon})\big)^2  +  \big( \mathcal{H}_xD_x^{\frac{\alpha+1}{2}}J^{\bar{s}}_x(u\psi_{\epsilon})\big)^2 \Big)  \chi_{\epsilon,b}\chi_{\epsilon,b}'\, dx dy \\
&-(\alpha+2)\int \Big( (D_x^{\frac{\alpha+1}{2}}J^{\bar{s}}_x u)(D_x^{\frac{\alpha+1}{2}}J^{\bar{s}}_x(u\psi_{\epsilon}))\\
& \hspace{2cm}+(\mathcal{H}_xD_x^{\frac{\alpha+1}{2}}J^{\bar{s}}_x u)(\mathcal{H}_xD_x^{\frac{\alpha+1}{2}}J^{\bar{s}}_x(u\psi_{\epsilon}))  \Big)  \chi_{\epsilon,b}\chi_{\epsilon,b}'\, dx dy \\
&+ \frac{1}{2}\int \big(J^{\bar{s}}_x(u\chi_{\epsilon,b})+J^{\bar{s}}_x(u\phi_{\epsilon,b})\big)R_{0}(\alpha+2)\big(J^{\bar{s}}_x(u\chi_{\epsilon,b})+J^{\bar{s}}_x(u\phi_{\epsilon,b})\big)\\
=:&Sm_{1}(t)+Sm_2(t)+\widetilde{A}_{2,1}(t)+\widetilde{A}_{2,2}(t).
\end{aligned}
\end{equation*}
Integrating between $[0,T]$, we notice that $Sm_{1}(t)$ corresponds to the required smoothing effect, while $Sm_{2}(t)$ provides a positive quantity. On the other hand, since $R_0(\alpha+2)$ satisfies \eqref{boundGV}, we have
\begin{equation}\label{eqmainT1}
\begin{aligned}
|\widetilde{A}_{2,2}(t)| &\lesssim \|J^{\bar{s}}(u\chi_{\epsilon,b})\|_{L^2_{xy}}^2+\|J^{\bar{s}}(u\phi_{\epsilon,b})\|_{L^2_{xy}}^2\\
&=:\widetilde{A}_{2,2,1}(t)+\|J^{\bar{s}}(u\phi_{\epsilon,b})\|_{L^2_{xy}}^2.
\end{aligned}
\end{equation}
Let $m_1\geq \max\{2,\bar{s}\}$, by employing Proposition \ref{PO3} with $\theta(x)=\chi_{\epsilon,b}(x+v t)$, it is seen that
\begin{equation}\label{eqmainT1.1}
\begin{aligned}
|\widetilde{A}_{2,2,1}(t)|\lesssim & \sum_{j=1}^{m_1}\|\|\chi_{\epsilon,b}^{j}\Psi^{(j)}(J^{\bar{s}-j}_xu)\|_{L^2_x}\|_{L^2_{y}}^2+\|\|\chi_{\epsilon,b}J^{\bar{s}}_xu\|_{L^2_x}\|_{L^2_{y}}^2\\
&+\|\|u\|_{L^2_x}\|_{L^2_{y}}^2 \\
\lesssim &  \sum_{j=1}^{m_1}\|\chi_{\epsilon,b}^{j}\Psi^{(j)}(J^{\bar{s}-j}_xu)\|_{L^2_{xy}}^2+\|\chi_{\epsilon,b}J^{\bar{s}}_xu\|_{L^2_{xy}}^2+\|u\|_{L^2_{xy}}^2.
\end{aligned}
\end{equation}
Now, for each $j=1,\dots,m_1$, we employ \eqref{phidecomp} to deduce
\begin{equation}\label{eqmainT1.2}
\begin{aligned}
\|\chi_{\epsilon,b}^{j}\Psi^{(j)}&(J^{\bar{s}-j}_xu)\|_{L^2_{xy}}\\
\lesssim & 
\|\chi_{\epsilon,b}^{j}\Psi^{(j)}(J^{\bar{s}-j}_x(u \chi_{\epsilon,b}))\|_{L^2_{xy}}+\|\chi_{\epsilon,b}^{j}\Psi^{(j)}(J^{\bar{s}-j}_x(u\phi_{\epsilon,b}) )\|_{L^2_{xy}}\\
&+\|\chi_{\epsilon,b}^{j}\Psi^{(j)}(J^{\bar{s}-j}_x(u\psi_{\epsilon}))\|_{L^2_{xy}} \\
\lesssim & \|J^{\bar{s}-j}_x(u \chi_{\epsilon,b})\|_{L^2_{xy}}+\|J^{\bar{s}-j}_x(u \phi_{\epsilon,b})\|_{L^2_{xy}}+\|u\|_{L^2_{xy}},
\end{aligned}
\end{equation}
where we have also applied Lemma \ref{supportseparetedJ} to estimate
\begin{equation}\label{eqmainT1.3}
\begin{aligned}
\|\chi_{\epsilon,b}^{j}\Psi^{(j)}(J^{\bar{s}-j}_x(u\psi_{\epsilon}))\|_{L^2_{xy}}
& \lesssim \|u\|_{L^{2}_{xy}}. 
\end{aligned}
\end{equation}
We emphasize that the above considerations yield the factor $\|J^{\bar{s}-j}_x(u \chi_{\epsilon,b})\|_{L^2_{xy}}$. This suggests that we can iterate the arguments in \eqref{eqmainT1.1}-\eqref{eqmainT1.3} decreasing the derivatives considered in each step until we arrive at
\begin{equation*}
\begin{aligned}
|\widetilde{A}_{2,2,1}(t)| &\lesssim  \sum_{j=0}^{m_1} \|\chi_{\epsilon,b}J^{\bar{s}-j}_xu\|_{L^2_{xy}}^2+\|J^{\bar{s}-j}_x(u \phi_{\epsilon,b})\|_{L^2_{xy}}^2+\|u\|_{H^{s_{\alpha}^{+}}}^2.
\end{aligned}
\end{equation*}
Since $u\in C([0,T];H^{s_{\alpha}^{+}}(\mathbb{R}^2))$, we can modify the constant in the above inequality to reduce the previous sum to integers $0\leq j \leq \max\{\bar{s}-s_{\alpha},0\}$. Thus, we gather these conclusions to deduce
\begin{equation*}
\begin{aligned}
|\widetilde{A}_{2,2}(t)| &\lesssim \sum_{0\leq j \leq \max\{\bar{s}-s_{\alpha},0\}} \|\chi_{\epsilon,b}J^{\bar{s}-j}_xu\|_{L^2_{xy}}^2+\|J^{\bar{s}-j}_x(u \phi_{\epsilon,b})\|_{L^2_{xy}}^2+\|u\|_{H^{s_{\alpha}^{+}}}^2.
\end{aligned}
\end{equation*}
Now, we turn to $\widetilde{A}_{2,1}(t)$. By Young's inequality $a_1a_2\leq \frac{a_1^p}{p}+\frac{a_2^{p'}}{p'}$, with $\frac{1}{p}+\frac{1}{p'}=1$, and the fact that $\chi_{\epsilon,b}\chi_{\epsilon,b}'\geq 0$, we have
\begin{equation}\label{eqmainT2}
\begin{aligned}
|\widetilde{A}_{2,1}(t)|\leq & \frac{1}{4}|Sm_1(t)|+2(\alpha+2)\|(\chi_{\epsilon,b}\chi_{\epsilon,b}')^{1/2}D_x^{\frac{\alpha+1}{2}}J^{\bar{s}}_x(u\psi_{\epsilon})\|_{L^2_{xy}}^2\\
&+2(\alpha+2)\|(\chi_{\epsilon,b}\chi_{\epsilon,b}')^{1/2} \mathcal{H}_xD_x^{\frac{\alpha+1}{2}}J^{\bar{s}}_x(u\psi_{\epsilon})\|_{L^2_{xy}}^2.
\end{aligned}
\end{equation}
Bearing in mind that
\begin{equation*}
\dist(\supp(\chi_{\epsilon,b}\chi_{\epsilon,b}'),\supp(\psi_{\epsilon}))\geq \epsilon/2,
\end{equation*}
an application of Lemma \ref{supportsepareted2} on the $x$-spatial variable yields
\begin{equation}\label{eqmainT3}
\begin{aligned}
\|(\chi_{\epsilon,b}\chi_{\epsilon,b}')^{1/2} D_x^{\frac{\alpha+1}{2}}J^{\bar{s}}_x(u\psi_{\epsilon})\|_{L^2_{xy}}&\lesssim \|(\chi_{\epsilon,b}\chi_{\epsilon,b}')^{1/2}\|_{L^{\infty}}\|u\|_{L^2_{xy}}.
\end{aligned}
\end{equation}
To estimate the third term on the right-hand side of \eqref{eqmainT2}, we consider a function $\vartheta_{\epsilon,b}(x)=\vartheta_{\epsilon,b}(x+vt)\in C^{\infty}_c(\mathbb{R})$ with $0\leq \vartheta_{\epsilon,b} \leq 1$, such that $\vartheta_{\epsilon,b}\chi_{\epsilon,b}\chi_{\epsilon,b}'=\chi_{\epsilon,b}\chi_{\epsilon,b}'$ and $\dist(\supp(\vartheta_{\epsilon,b}),\supp(\psi_{\epsilon}))\geq \epsilon/8$, then we write
\begin{equation*}
\begin{aligned}
(\chi_{\epsilon,b}\chi_{\epsilon,b}')^{1/2} \mathcal{H}_xD_x^{\frac{\alpha+1}{2}}J^{\bar{s}}_x(u\psi_{\epsilon})=&(\chi_{\epsilon,b}\chi_{\epsilon,b}')^{1/2}\vartheta_{\epsilon,b} \mathcal{H}_xD_x^{\frac{\alpha+1}{2}}J^{\bar{s}}_x(u\psi_{\epsilon})\\
=&(\chi_{\epsilon,b}\chi_{\epsilon,b}')^{1/2}[\vartheta_{\epsilon,b},\mathcal{H}_x]D_x^{\frac{\alpha+1}{2}}J^{\bar{s}}_x(u\psi_{\epsilon})\\
&+(\chi_{\epsilon,b}\chi_{\epsilon,b}')^{1/2}\mathcal{H}_x\big(\vartheta_{\epsilon,b} D_x^{\frac{\alpha+1}{2}}J^{\bar{s}}_x(u\psi_{\epsilon}) \big).
\end{aligned}
\end{equation*}
Clearly, since $\mathcal{H}_x$ is a bounded operator on $L^2(\mathbb{R}^2)$, the second term on the r.h.s of the above inequality is estimated as in \eqref{eqmainT3}. Now, let $m_2$ be an integer fixed such that $2m_2>\bar{s}+\frac{\alpha+1}{2}$, then by Proposition \ref{CalderonComGU} we find
\begin{equation*}
\begin{aligned}
\|[\vartheta_{\epsilon,b},&\mathcal{H}_x]D_x^{\frac{\alpha+1}{2}}J^{\bar{s}}_x(u\psi_{\epsilon})\|_{L^2}\\
&=\|\|[\vartheta_{\epsilon,b},\mathcal{H}_x]J^{2m_2}_xJ^{-2m_2}_x D_x^{\frac{\alpha+1}{2}}J^{\bar{s}}_x(u\psi_{\epsilon})\|_{L^2_x}\|_{L^2_y} \\
&\lesssim \|J^{2m_2}_x(\vartheta_{\epsilon,b})\|_{L^{\infty}}\|J^{-2m_2}_x D_x^{\frac{\alpha+1}{2}}J^{\bar{s}}_x(u\psi_{\epsilon})\|_{L^2_{xy}}\\ 
&\lesssim \|u\|_{L^2_{xy}}.
\end{aligned}
\end{equation*}
This completes the estimate for \eqref{eqmainT2} and in turn the study of $A_{2,1}(t)+A_{2,2}(t)+A_{2,3}(t)$.

Finally, we deal with $A_{2,4}(t), A_{2,5}(t)$ and $A_{2,6}(t)$ defined in \eqref{eqpropa0}. By using again that $1=\chi_{\epsilon,b}+\phi_{\epsilon,b}+\psi_{\epsilon}$, we write
\begin{equation}\label{eqmainT3.1}
\begin{aligned}
A_{2,4}(t)+A_{2,5}(t)+A_{2,6}(t)=&2\sigma(J^{\bar{s}}_x u,J^{\bar{s}}_x(u\psi_{\epsilon}))\\
&-\sigma(J^{\bar{s}}_x(u\psi_{\epsilon}),J^{\bar{s}}_x(u\psi_{\epsilon})).
\end{aligned}
\end{equation}
We proceed to estimate each factor of the above identity.  By opening up the commutator defining $\sigma$, using that $J^{\bar{s}}$ is a symmetry operator and that $D^{\alpha+1}_x=\mathcal{H}_x D^{\alpha}_x\partial_x$, it is deduced that
\begin{equation*}
\begin{aligned}
2\sigma(J^{\bar{s}}_x u,J^{\bar{s}}_x(u\psi_{\epsilon})) 
=&\int u J^{\bar{s}}_x \mathcal{H}_x D_x^{\alpha}\partial_x^2\big(\chi^2_{\epsilon,b}J^{\bar{s}}_x (u\psi_{\epsilon})\big)\, dx dy\\
&-\int u J^{\bar{s}}_x \big(\chi^2_{\epsilon,b} \mathcal{H}_x D_x^{\alpha}\partial_x^2J^{\bar{s}}_x(u\psi_{\epsilon})\big)\, dx dy \\
=&\int u J^{\bar{s}}_x \mathcal{H}_x D_x^{\alpha}\partial_x^2\big(\chi^2_{\epsilon,b}J^{\bar{s}}_x (u\psi_{\epsilon})\big)\, dx dy\\
&-\int u J^{\bar{s}}_x \big([\chi^2_{\epsilon,b}, \mathcal{H}_x ]D_x^{\alpha}\partial_x^2J^{\bar{s}}_x(u\psi_{\epsilon})\big)\, dx dy \\
&-\int u J^{\bar{s}}_x \mathcal{H}_x \big(\chi^2_{\epsilon,b}D_x^{\alpha}\partial_x^2J^{\bar{s}}_x(u\psi_{\epsilon})\big)\, dx dy \\
=:&\widetilde{A}_{2,3}(t)+\widetilde{A}_{2,4}(t)+\widetilde{A}_{2,5}(t).
\end{aligned}
\end{equation*}
Let $m_3$ be an even integer sufficiently large such that $m_3>\bar{s}+2+\alpha$, then by the embedding $H^{m_3}(\mathbb{R}) \hookrightarrow H^{\bar{s}+2+\alpha}(\mathbb{R})$ on the $x$-variable, we find
\begin{equation}\label{eqmainT4}
\begin{aligned}
|\widetilde{A}_{2,3}(t)|&+|\widetilde{A}_{2,5}(t)|\\
\lesssim &  \sum_{j=0}^{m_3}\big(\|\partial_x^{j}\big(\chi^2_{\epsilon,b}J^{\bar{s}}_x(u\psi_{\epsilon})\big)\|_{L^2_{xy}}\\
&\hspace{2cm}+\|\partial_x^{j}(\chi^2_{\epsilon,b}D^{\alpha}_x\partial_x^2 J^{\bar{s}}_x(u\psi_{\epsilon}))\|_{L^2_{xy}} \big)\Big)\|u\|_{L^2_{xy}}.
\end{aligned}
\end{equation}
Given that $\partial^{k}_xJ^{\bar{s}}_x$ determines a pseudo-differential operator on the $x$-variable for all integer $k\geq 0$, and that $\chi_{\epsilon,b}$ and $\psi_{\epsilon}$ have separated support, by applying Corollary \ref{supportsepareted3} we get
\begin{equation*}
\begin{aligned}
\|\|(\partial_x^k\chi_{\epsilon,b}^2)&(\partial_x^{j-k}J_x^{\bar{s}})(u\psi_{\epsilon})\|_{L^2_x}\|_{L^2_y}+\|\|(\partial_x^k\chi_{\epsilon,b}^2)D_x^{\alpha}(\partial_x^{j-k+2}J_x^{\bar{s}})(u\psi_{\epsilon})\|_{L^2_x}\|_{L^2_y}\\
&\lesssim \|\|\partial_x^k\chi_{\epsilon,b}^2\|_{L^{\infty}}\|u\psi_\epsilon\|_{L^2_x}\|_{L^2_y}\\
&\lesssim \|u\|_{L^2_{xy}},
\end{aligned}
\end{equation*}
for each $k=0,\dots, j$ and $j=0,\dots,m_3$. Summing over these indexes, we control the right-hand side of \eqref{eqmainT4}.

Now, from our choice of $m_3$,
\begin{equation*}
\begin{aligned}
|\widetilde{A}_{2,4}(t)|&\lesssim \|\|J^{m_3}_x[\mathcal{H}_x,\chi_{\epsilon,b}^2]J^{m_3}_x( J^{-m_3}_xD_x^{\alpha}\partial_x^2J^{\bar{s}}_x(u\psi_{\epsilon}))\|_{L^2_x}\|_{L^2_y}\|u\|_{L^2_{xy}}\\
&\lesssim \|J^{2m_3}_x \chi_{\epsilon,b}^2\|_{L^{\infty}}\|\|J^{-m_3}D_x^{\alpha}\partial_x^2J^{\bar{s}}_x(u\psi_{\epsilon})\|_{L^2_x}\|_{L^2_y}\|u\|_{L^2_{xy}}\\
&\lesssim \|u\|_{L^2_{xy}}^2.
\end{aligned}
\end{equation*}
Collecting the previous estimates, we complete the analysis of $\sigma(J^{\bar{s}}_x u,J^{\bar{s}}_x(u\psi_{\epsilon}))$. In light of the fact that the above argument clearly applies to $\sigma(J^{\bar{s}}_x f,J^{\bar{s}}_x(u\psi_{\epsilon}))$ as long as $f\in L^{2}(\mathbb{R}^2)$, the remaining estimate in \eqref{eqmainT3.1} can be controlled in a similar fashion. The proof of the lemma is complete.
\end{proof}

\begin{remark}\label{remacommchi}
It is worth emphasizing that the above considerations dealing with $\widetilde{A}_{2,2}$ in the proof of Lemma \ref{estimatA2} provide the following inequality
\begin{equation*}
\begin{aligned}
\|J^{\bar{s}}(u \chi_{\epsilon,b})\|_{L^2_{xy}} \lesssim \sum_{0\leq j \leq \bar{s}-s_{\alpha}} \|\chi_{\epsilon,b}J^{\bar{s}-j}_xu\|_{L^2_{xy}}+\|J^{\bar{s}-j}_x(u \phi_{\epsilon,b})\|_{L^2_{xy}}+\|u\|_{H^{s_{\alpha}^{+}}},
\end{aligned}
\end{equation*}
whenever $\bar{s}>s_{\alpha}$ is fixed. This estimate will be convenient to replace the analysis of $J^{\bar{s}}(u \chi_{\epsilon,b})$ by that of $\chi_{\epsilon,b}J^{\bar{s}} u$.
\end{remark}
Next, we proceed to deduce Theorem \ref{mainTheo}. We divide our attention into several cases determined by the values of $s>s_{\alpha}=(17-2\alpha)/12$.


\subsection{Case: \texorpdfstring{$ s\in (s_{\alpha}, 2)$}{}}

We further divide our consideration into two main steps.
\begin{itemize}
\item {\sc Step 1}. Study differential equation \eqref{identnonl} for $\bar{s}=r+\frac{1-\alpha}{2}$, where $r=s-1$.
\item {\sc Step 2}. Study differential equation \eqref{identnonl} for $\bar{s}=s$.
\end{itemize}
It should be noted that {\sc Step 1} is motivated to obtain the local smoothing effect corresponding to $J^s_x$ derivatives of $u$, which is required to deal with $A_1(t)$ in \eqref{identnonl} for the desired case $\bar{s}=s$. In {\sc Step 2}, we prove Theorem \ref{mainTheo} for indexes $s\in (s_\alpha,2)$. 

\subsubsection{\sc Step 1.}\label{step1}
        
In this part, we deal with identity \eqref{identnonl} for $\bar{s}=r+\frac{1-\alpha}{2}$, where $r=s-1$. We separate our analysis according to the corresponding factors $A_j(t)$, $j=1,2, 3$.

\textbf{\emph{Estimate for $A_1$}}. Since $r=s-1<s_{\alpha}$, Proposition \ref{KATOSM} implies
\begin{equation*}
D_x^{\frac{1+\alpha}{2}}J^r_xu(x,y,t) \in L^2((-R,R)\times \mathbb{R}\times (0,T)),
\end{equation*}
for all $R>0$. Then, in view of the fact that $J^r_xu \in C([0,T],L^{2}(\mathbb{R}^2))$, we can apply Lemma \ref{fraclemma2} (I) to get
 \begin{equation*}
J^{r+\frac{1+\alpha}{2}}_xu(x,y,t) \in L^2((-R,R)\times \mathbb{R}\times (0,T)),
\end{equation*}
for all $R>0$. The previous conclusion and Lemma \ref{fraclemma2} (III) reveal
 \begin{equation}\label{eqmainT0}
J^{\widetilde{s}}_xu(x,y,t) \in L^2((-R,R)\times \mathbb{R}\times (0,T)),
\end{equation}
for all $R>0$ and all $\widetilde{s}\in [0,r+\frac{1+\alpha}{2}]$. Consequently, we set $R_1>0$ such that $\supp(\chi_{\epsilon,b}(x+vt)\chi_{\epsilon,b}'(x+vt))\subset (-R_1,R_1)$, for all $t\in [0,T]$. Then, by noticing that $r+\frac{1-\alpha}{2}<s-1+\frac{1+\alpha}{2}$ with $s\in (s_{\alpha},2)$, and by \eqref{eqmainT0}, it is seen that
\begin{equation}\label{eqmainT0.01}
\begin{aligned}
\int_0^T |A_{1}(t)| \, dt \lesssim \int_0^T \int_{\mathbb{R}}\int_{-R_1}^{R_1} (J_x^{r+\frac{1-\alpha}{2}} u)^2(x,y,t)   \, dx dy dt<\infty.
\end{aligned}
\end{equation} 
The analysis of $A_1(t)$ is complete.

{\bf \emph{Estimate for $A_2$}}. In virtue of Lemma \ref{estimatA2}, we just need to justify the validity of the r.h.s of \eqref{eqlemma1} under the current restrictions. Indeed, since $r+\frac{1-\alpha}{2}-s_\alpha<1$, we are reduced to control
\begin{equation}\label{eqmainT0.1}
 \|\chi_{\epsilon,b}J^{r+\frac{1-\alpha}{2}}_xu\|_{L^2_{xy}}+\|J^{r+\frac{1-\alpha}{2}}_x(u \phi_{\epsilon,b})\|_{L^2_{xy}}.
\end{equation}
The first term on the right-hand side of the above expression is the quantity to be estimate through \eqref{identnonl} and Gronwall's lemma, while the second one is bounded by Lemma \ref{fraclemma2} (IV) and \eqref{eqmainT0} as follows
\begin{equation}\label{eqmainT0.2}
\begin{aligned}
\|J^{r+\frac{1-\alpha}{2}}_x(u \phi_{\epsilon,b})\|_{L^2_{xy}} 
&\lesssim \|\|\phi_{R_1,x}J^{r+\frac{1-\alpha}{2}}_x u\|_{L^2_x}\|_{L^2_y}+\|\|u\|_{L^2_x}\|_{L^2_y}\\
&= \|\phi_{R_1,x}J^{r+\frac{1-\alpha}{2}}_x u\|_{L^2_{xy}}+\|u\|_{L^2_{xy}},
\end{aligned}
\end{equation} 
where $R_1>0$, is given as in \eqref{eqmainT0.01} and we have set $\phi_{R_1,x}=\phi_{R_1,x}(x) \in C^{\infty}_c(\mathbb{R})$ such that $\phi_{R_1,x}(x)=1$ on $[-R_1,R_1]$, and $\dist\big((1-\phi_{R_1,x})(x), \phi_{\epsilon,b}(x+vt)\big)\geq \epsilon/4$, for all $t\in [0,T]$. This completes the considerations for $A_2$.


{\bf \emph{Estimate for $A_3$}}. Recalling \eqref{phidecomp} and \eqref{phidecomp2}, we begin by writing 
\begin{equation*}
\begin{split}
\chi_{\epsilon,b}J_{x}^{r+\frac{1-\alpha}{2}}&(u\partial_{x}u)\\
&=-\frac{1}{2}[J_{x}^{r+\frac{1-\alpha}{2}}, \chi_{\epsilon,b}] \partial_{x}\big((u\chi_{\epsilon,b})^{2}+(u\widetilde{\phi}_{\epsilon,b})^{2}+u^{2}\psi_{\epsilon}\big)\\
&\quad + [J^{r+\frac{1-\alpha}{2}}_{x}, u\chi_{\epsilon,b}]\partial_{x}\big(u\chi_{\epsilon,b}+u\phi_{\epsilon,b}+u\psi_{\epsilon}\big)+u\chi_{\epsilon,b}\partial_{x}J_{x}^{r+\frac{1-\alpha}{2}}u\\
&=: A_{3,1}+A_{3,2}+A_{3,3}+A_{3,4} +A_{3,5}+A_{3,6}+ A_{3,7}.
\end{split}
\end{equation*}
In the first place, we obtain, after applying Lemmas \ref{fraLR} and \ref{lem1},
\begin{equation*}
\begin{split}
\|A_{3,1}\|_{L^{2}_{x,y}}&=\frac{1}{2}\|\|[J_{x}^{r+\frac{1-\alpha}{2}}, \chi_{\epsilon,b}] \partial_{x}(u\chi_{\epsilon,b})^2\|_{L^2_x}\|_{L^2_y}\\
&\lesssim \|\|J_{x}^{l}\chi_{\epsilon,b}'\|_{L^{2}}\|J_{x}^{r+\frac{1-\alpha}{2}}(u\chi_{\epsilon,b})^{2}\|_{L^{2}_{x}}\|_{L^2_y}\\
&\lesssim \|\|u\chi_{\epsilon,b}\|_{L^{\infty}_{x}}\|J_{x}^{r+\frac{1-\alpha}{2}}(u\chi_{\epsilon,b})\|_{L^{2}_{x}}\|_{L^2_y}\\
&\lesssim \|u\|_{L_{xy}^{\infty}}\|J_{x}^{r+\frac{1-\alpha}{2}}\left(u\chi_{\epsilon,b}\right)\|_{L_{xy}^{2}}.
\end{split}
\end{equation*}
A similar analysis can be applied to provide upper bounds for  $A_{3,2}$ as follows 
\begin{equation*}
\|A_{3,2}\|_{L^{2}_{xy}}\lesssim \|u\|_{L_{xy}^{\infty}}\|J_{x}^{r+\frac{1-\alpha}{2}}(u\widetilde{\phi}_{\epsilon,b})\|_{L_{xy}^{2}}.
\end{equation*}
Instead, for $A_{3,3}$, by opening the commutator involved, we take hand of the relationship of the weighted functions involved. More precisely, since
\begin{equation*}
\dist( \supp(\chi_{\epsilon,b}), \supp(\psi_{\epsilon}))\geq \frac{\epsilon}{2}>0,
\end{equation*}
we obtain after applying Lemma \ref{supportseparetedJ} on the $x$-spatial variable the bound 
\begin{equation*}
\begin{split}
\|A_{3,3}\|_{L^{2}_{xy}}&=\|\|\chi_{\epsilon,b} J_{x}^{r+\frac{1-\alpha}{2}}(u^2\psi_{\epsilon})\|_{L^2_x}\|_{L^2_y}\\
&\lesssim \|\|\chi_{\epsilon,b}\|_{L^{\infty}}\|u^2\psi_{\epsilon}\|_{L^2_x}\|_{L^2_y}\\
&\lesssim \|u\|_{L_{xy}^{\infty}}\|u\|_{L^{2}_{xy}}.
\end{split}
\end{equation*}
A similar analysis applied to $A_{3,6}$ produces
\begin{equation*}
\|A_{3,6}\|_{L^{2}_{xy}}\lesssim \|u\|_{L^{\infty}_{xy}}\|u\|_{L^{2}_{xy}}.
\end{equation*} 
Instead, the terms $A_{3,4}$ and $A_{3,5}$ require implementing more sophisticated tool. In this sense, Kato-Ponce commutator estimate Lemma \ref{conmKP} guarantee that
\begin{equation*}
\begin{split}
\|A_{3,4}\|_{L^{2}_{xy}}&=\|\|[J^{r+\frac{1-\alpha}{2}}_{x},u\chi_{\epsilon,b}]\partial_{x}(u\chi_{\epsilon,b})\|_{L^2_x}\|_{L^2_{y}}\\
&\lesssim \|\|\partial_x(u\chi_{\epsilon,b})\|_{L^{\infty}_x}\|J^{r+\frac{1-\alpha}{2}}_{x}(u\chi_{\epsilon,b})\|_{L^2_x}\|_{L^2_y} \\
&\lesssim (\|u\|_{L^{\infty}_{xy}}+\|\partial_{x}u\|_{L^{\infty}_{xy}})\|J_{x}^{r+\frac{1-\alpha}{2}}(u\chi_{\epsilon,b})\|_{L_{xy}^{2}}.
\end{split}
\end{equation*}
For $A_{3,5}$, the same combination of techniques as before allow us to obtain 
\begin{equation*}
\begin{split}
\|A_{3,5}\|_{L^{2}_{xy}}& \lesssim (\|u\|_{L^{\infty}_{xy}}+\|\partial_{x}u\|_{L^{\infty}_{xy}})\big(\|J_{x}^{r+\frac{1-\alpha}{2}}(u\chi_{\epsilon,b})\|_{L^2_{xy}}+\|J_{x}^{r+\frac{1-\alpha}{2}}(u\phi_{\epsilon,b})\|_{L^2_{xy}}\big).
\end{split}
\end{equation*}
Finally, the term $A_{3,7}$ can be handled by going back at the integral defining $A_{3}$ and integrating by parts. More precisely,
\begin{equation*}
\begin{split}
\int A_{3,7}&\chi_{\epsilon,b}J_{x}^{r+\frac{1-\alpha}{2}}u\,dx\,dy \\
&=\int u\chi_{\epsilon,b}\partial_{x}J_{x}^{r+\frac{1-\alpha}{2}}u (\chi_{\epsilon,b}J_{x}^{r+\frac{1-\alpha}{2}}u)\,dx\,dy\\
&=-\frac{1}{2}\int \partial_{x}u\chi_{\epsilon,b}^{2}\big(J_{x}^{r+\frac{1-\alpha}{2}}u\big)^{2}dx\,dy-\int u\chi_{\epsilon,b}\chi_{\epsilon,b}'\big(J_{x}^{r+\frac{1-\alpha}{2}}u\big)^2\, dx\, dy\\
&=A_{3,7,1}(t)+A_{3,7,2}(t).
\end{split}
\end{equation*}
Notice that the first term on the r.h.s. can be bounded by  using the Strichartz estimate provided by the local theory, that is 
\begin{equation*}
\begin{split}
|A_{3,7,1}(t)|&\lesssim \left\|\partial_{x}u\right\|_{L^{\infty}_{xy}}\| \chi_{\epsilon,b}J_{x}^{r+\frac{1-\alpha}{2}}u\|_{L^2_{xy}}^2,
\end{split}
\end{equation*}
being the last expression the one to be estimated after using Gronwall's inequality. 

We emphasize that in spite of $A_{3,7,1}$ and $A_{3,7,2}$ share some similarities, the regions involved in each of these estimates are different, and so the way we can provide some upper bounds for their respective expressions. In this sense, we obtain by Sobolev embedding
\begin{equation*}
\begin{split}
|A_{3,7,2}(t)|&\leq \|u\|_{L^{\infty}_{xy}}\|\chi_{\epsilon,b}\chi_{\epsilon,b}'J_{x}^{r+\frac{1-\alpha}{2}}u\|_{L^2_{xy}}^{2} \lesssim \|u\|_{H^{s_{\alpha}^{+}}}A_1(t),
\end{split}
\end{equation*}
where the term  $A_{1}$ was already bounded at the beginning of this section.

In order to fully control the above estimates generated in the study of the non-linear part, it remains to control the terms 
\begin{equation}\label{eqmainT0.3} 
\|J_{x}^{r+\frac{1-\alpha}{2}}(u\chi_{\epsilon,b})\|_{L^{2}_{xy}}, \, \|J_{x}^{r+\frac{1-\alpha}{2}}(u\phi_{\epsilon,b})\|_{L^{2}_{xy}}, \text{ and } \|J_{x}^{r+\frac{1-\alpha}{2}}(u\widetilde{\phi}_{\epsilon,b})\|_{L^{2}_{xy}}.
\end{equation}
Recalling that the $\supp(\phi_{\epsilon,b}),\supp(\widetilde{\phi}_{\epsilon,b})\subset [\epsilon/4,b]$, the same reasoning around \eqref{eqmainT0.2} allow us to bound $\|J_{x}^{r+\frac{1-\alpha}{2}}(u\phi_{\epsilon,b})\|_{L^{2}_{xy}}$ and $\|J_{x}^{r+\frac{1-\alpha}{2}}(u\widetilde{\phi}_{\epsilon,b})\|_{L^{2}_{xy}}$. 

On the other hand, given that $r+\frac{1-\alpha}{2}-s_{\alpha}=s-1+\frac{1-\alpha}{2}-s_{\alpha}<1$, $s\in(s_{\alpha},2)$, by Remark \ref{remacommchi} it is seen that
\begin{equation*}
\begin{aligned}
\|J^{r+\frac{1-\alpha}{2}}(u \chi_{\epsilon,b})\|_{L^2_{xy}} \lesssim \|\chi_{\epsilon,b}J^{r+\frac{1-\alpha}{2}}_xu\|_{L^2_{xy}}+\|J^{r+\frac{1-\alpha}{2}}_x(u \phi_{\epsilon,b})\|_{L^2_{xy}}+\|u\|_{H^{s_{\alpha}^{+}}},
\end{aligned}
\end{equation*}
where the first term on the right-hand side of the above expression is the quantity to be estimated employing Gronwall's lemma, and the second term was considered in previous discussions. Thereby, the estimate for the factor arising from the nonlinear $A_3(t)$ is complete. 

\begin{remark}\label{remarA3}
The arguments in the above estimate are quite general and they can be implemented to  provide a bound for the factor $A_3(t)$ in \eqref{identnonl} for any regularity $\bar{s}>s_{\alpha}$. Indeed, we just need to justify the terms \eqref{eqmainT0.3} obtained after replacing the regularity $r+\frac{1-\alpha}{2}$ by $\bar{s}$, that is to say,
\begin{equation}\label{eqremarA3}
\begin{aligned}
\sum_{0\leq j \leq \bar{s}-s_{\alpha}} \|\chi_{\epsilon,b}J^{\bar{s}-j}_xu\|_{L^2_{xy}}+&\|J^{\bar{s}-j}_x(u \phi_{\epsilon,b})\|_{L^2_{xy}}\\
&+\|J^{\bar{s}}_x(u \widetilde{\phi}_{\epsilon,b})\|_{L^2_{xy}}+\|u\|_{H^{s_{\alpha}^{+}}},
\end{aligned}
\end{equation}
where we have applied Remark \ref{remacommchi} to bound $\|J^{\bar{s}}(u \chi_{\epsilon,b})\|_{L^2}$.
\end{remark}

Finally, recalling $Sm_1(t)$ and $Sm_2(t)$ defined in Lemma \ref{estimatA2}, we gather the estimates above to conclude that there exist some constants $c_0$ and $c_1$ such that 
\begin{equation*}
\begin{aligned}
\frac{1}{2}\frac{d}{dt}\|\chi_{\epsilon,b} J_x^{r+\frac{1-\alpha}{2}} u \|_{L^2_{xy}}^2 & +Sm_1(t)+Sm_2(t)+\|(\chi\chi_{\epsilon,b}')^{1/2}\partial_y  J_x^{r+\frac{1-\alpha}{2}} u\|_{L^2_{xy}}^2 \\
\leq & \frac{1}{4}Sm_1(t)+c_0\|u\|_{L^{\infty}_TH^{s_{\alpha}^{+}}}^2+(1+\|u\|_{L^{\infty}_TH^{s_{\alpha}^{+}}})\mathfrak{g}(t)\\
&+c_1\big(1+\|u\|_{L^{\infty}}+\|\partial_x u\|_{L^{\infty}}\big)\| \chi_{\epsilon,b} J_x^{r+\frac{1-\alpha}{2}} u\|_{L^2_{xy}}^2,
\end{aligned}
\end{equation*}
where the function $\mathfrak{g}(t)$ satisfies
\begin{equation*}
\int_0^T\mathfrak{g}(t) dt \lesssim \int_0^T\int_{\mathbb{R}}  \int_{-R_1}^{R_1}(J_x^{r+\frac{1-\alpha}{2}} u)^2(x,y,t) \, dx dy dt <\infty, 
\end{equation*}
being $R_1>0$ provided by \eqref{eqmainT0.01}. Thus, Gronwall’s inequality and integration in time yield
\begin{equation}\label{concluGronwal}
\begin{aligned}
\sup_{t\in[0,T]}\|\chi_{\epsilon,b}& J_x^{r+\frac{1-\alpha}{2}} u(t)\|_{L^2_{xy}}^2  +\int_0^T Sm_1(t)\, dt\\
&+\int_0^T \int_{\mathbb{R}^2} \chi_{\epsilon,b}\chi_{\epsilon,b}'(\partial_yJ_x^{r+\frac{1-\alpha}{2}}u)^2(x,y,t)\, dx dy dt \lesssim  \widetilde{c},
\end{aligned}
\end{equation}
where $\widetilde{c}=\widetilde{c}(\epsilon,v,T,\|\chi_{\epsilon,b} J_x^{r+\frac{1-\alpha}{2}} u_0\|_{L^2_{xy}},\|u\|_{L^{\infty}_TH^{s_{\alpha}^{+}}},\|\partial_x u\|_{L^1_TL^{\infty}_{xy}})$ and we have used that $\int_0^T Sm_s(t)\, dt \geq 0$. This completes the deduction of {\sc Step 1}.


\subsubsection{\sc  Step 2.}\label{Step2}

Fixing $\epsilon$ and $b\geq 5 \epsilon$, this part concerns the analysis of \eqref{identnonl} for $\widetilde{s}=s$, $s\in (s_\alpha,2)$, i.e., we establish the proof of Theorem \ref{mainTheo} for such indexes $s$. We shall only estimate the terms $A_j(t)$, $j=1,2,3$. Once this has been done, following similar consideration leading to  \eqref{concluGronwal}, we obtain the desired conclusion. To avoid repetition, we omit these details.

{\bf Estimate for $A_1$}. By applying the arguments in {\sc Step 1} above with the function $\chi^{2}_{\epsilon/24,b+13\epsilon/12}$, and by properties (iii) and (iv) in Subsection \ref{subweigh}, the local smoothing effect obtained in {\sc Step 1} shows
\begin{equation}\label{eqmainT5}
\int_0^T \int_{\mathbb{R}^2} \mathbbm{1}_{[\epsilon/8,b+\epsilon]}(x+vt)   (D^{\frac{1+\alpha}{2}}_x J^{s-1+\frac{1-\alpha}{2}}_x u)^2(x,y,t)\, dxdydt \leq c.
\end{equation}
Additionally, \eqref{eqmainT0} determines
\begin{equation}\label{eqmainT6}
\int_0^T \int_{\mathbb{R}^2} \mathbbm{1}_{[\epsilon/8,b+\epsilon]}(x+vt) ( J^{s-1+\frac{1-\alpha}{2}}_x u)^2(x,y,t)\, dxdydt \leq c.
\end{equation}
Then, we consider functions $\theta_{1,\epsilon,b},\theta_{2,\epsilon,b},\theta_{3,\epsilon,b} \in C^{\infty}_c(\mathbb{R})$ with $0\leq \theta _{j,\epsilon,b} \leq 1$, $j=1,2,3$ such that
\begin{equation}\label{decompfunct}
\begin{aligned}
&\theta_{1,\epsilon,b} \equiv 1 \text{ on } [5\epsilon/32, b+3\epsilon/4],  &&\text{and }\, \supp(\theta_{1,\epsilon,b}) \subset [\epsilon/8,b+\epsilon], \\
&\theta_{2,\epsilon,b} \equiv 1 \text{ on } [7\epsilon/32, b+\epsilon/2],  &&\text{and }\, \supp(\theta_{2,\epsilon,b}) \subset [3\epsilon/16,b+5\epsilon/8], \\
&\theta_{3,\epsilon,b} \equiv 1 \text{ on } [\epsilon, b],  &&\text{and }\, \supp(\theta_{3,\epsilon,b}) \subset [\epsilon/4,b+\epsilon/4].
\end{aligned}
\end{equation}
We shall assume that the above functions act only on the $x$-variable in the following manner, $\theta_{j,\epsilon,b}=\theta_{j,\epsilon,b}(x+vt)$, $j=1,2,3$. Then, from \eqref{eqmainT5} and \eqref{eqmainT6}, we infer that $\theta_{1,\epsilon,b}  J^{s-1+\frac{1-\alpha}{2}}_x u$ and $\theta_{1,\epsilon,b} D^{\frac{1+\alpha}{2}}_x J^{s-1+\frac{1-\alpha}{2}}_x u\in L^{2}(\mathbb{R}^2\times (0,T))$. Additionally, we have $J^{s-1+\frac{1-\alpha}{2}}_x u(\cdot,y,t) \in H^{-m}(\mathbb{R})$ for $m>s-1+\frac{1-\alpha}{2}$ for almost every $y$ and $t$. Therefore, we are in condition to apply Lemma \ref{fraclemma2} (I) to obtain
\begin{equation}\label{eqmainT7}
\begin{aligned}
\|\theta_{2,\epsilon,b} & J^{s}_x u\|_{L^2(\mathbb{R}^2\times (0,T))}\\
=& \|\|\theta_{2,\epsilon,b} J^{s}_xu\|_{L^2_x}\|_{L^2_{yt}(\mathbb{R}\times (0,T))} \\
\lesssim &\|\|\theta_{1,\epsilon,b} J^{s-1+\frac{1-\alpha}{2}}_x u\|_{L^2_x}\|_{L^2_{yt}(\mathbb{R}\times (0,T))}\\
&+\|\|\theta_{1,\epsilon,b} D^{\frac{\alpha+1}{2}}J^{s-1+\frac{1-\alpha}{2}}_x u\|_{L^2_x}\|_{L^2_{yt}(\mathbb{R}\times (0,T))}\\
&+\|\|J^{-m}_x J^{s-1+\frac{1-\alpha}{2}}_x u\|_{L^2_x}\|_{L^2_{yt}(\mathbb{R}\times (0,T))} \\
\lesssim &\|\theta_{1,\epsilon,b} J^{s-1+\frac{1-\alpha}{2}}_x u\|_{L^2_{xyt}(\mathbb{R}^2\times (0,T))}\\
&+\|\theta_{1,\epsilon,b} D^{\frac{\alpha+1}{2}}J^{s-1+\frac{1-\alpha}{2}}_x u\|_{L^2_{xyt}(\mathbb{R}^2\times (0,T))}+\|u\|_{L^2_{xyt}(\mathbb{R}\times (0,T))}.
\end{aligned}
\end{equation}
We conclude that $\theta_{2,\epsilon,b}  J^{s}_x u \in L^2(\mathbb{R}^2 \times (0,T))$. From this fact, and by similar reasoning to \eqref{eqmainT7}, employing Lemma \ref{fraclemma2} (III) instead, we deduce 
\begin{equation}\label{eqmainT8}
\theta_{3,\epsilon,b} J^{\widetilde{s}}u \in L^2(\mathbb{R}^2 \times (0,T)), \, \text{ for any } \,\widetilde{s}\in (0,s].
\end{equation} 
Finally, by \eqref{eqmainT8} and support considerations, we arrive at
\begin{equation*}
\begin{aligned}
\big|\int_0^T A_1(t)\, dt \big|&=|v|\int_0^T \int \chi_{\epsilon,b}\chi_{\epsilon,b}' (J^su)^2 \, dx dy dt \\
&\lesssim \int_0^T \int (\theta_{3,\epsilon,b} J^su)^2\, dx dy dt<\infty. 
\end{aligned}
\end{equation*}
The estimate for $A_1$ is complete.

{\bf Estimate for $A_2$ and $A_3$}. To control $A_2$, given that $0<s-s_{\alpha}<1$,  Lemma \ref{estimatA2} and \eqref{eqlemma1} reduce our efforts to estimate $\|J^{s}_x(u \phi_{\epsilon,b})\|_{L^2_{xy}}$. However, this estimate can be obtained as a consequence of the support of the function $\theta_{2,\epsilon,b}$ defined above and Lemma \ref{fraclemma2} (IV) as follows
\begin{equation}\label{eqmainT9}
\begin{aligned}
\|J^{s}_x(u \phi_{\epsilon,b})\|_{L^2_{xy}}&\lesssim \|\|\theta_{2,\epsilon,b} J^{s}_xu\|_{L^2_x}\|_{L^2_{y}}+\|\|u\|_{L^2_x}\|_{L^2_{y}}\\
&=\|\theta_{2,\epsilon,b}J^{s}_xu\|_{L^2_{xy}}+\|u\|_{L^2_{xy}}.
\end{aligned}
\end{equation}
Since we already proved that $\theta_{2,\epsilon,b}  J^{s}_x u \in L^2(\mathbb{R}^2 \times (0,T))$, the estimate for $A_2$ is complete.

Finally, to estimate $A_3$, according to Remark \ref{remarA3}, we just need to justify \eqref{eqremarA3} for $\bar{s}=s$. However, due to the restriction $s-s_{\alpha}<1$, this is equivalent to study the norms $\|J^{s}_x(u \phi_{\epsilon,b})\|_{L^2_{xy}}$ and $\|J^{s}_x(u \widetilde{\phi}_{\epsilon,b})\|_{L^2_{xy}}$. Recalling that $\supp(\phi_{\epsilon,b}),\supp(\widetilde{\phi}_{\epsilon,b})\subset [\epsilon/4,b]$, these estimates follow from \eqref{eqmainT9}. 


\subsection{Case \texorpdfstring{$k-(k-2)\big(\frac{1-\alpha}{2}\big)\leq s < k+1-(k-1)\big(\frac{1-\alpha}{2}\big)$, $k\geq 2$}{}}

We have developed all the set up required to provide the proof of Theorem \ref{mainTheo} for arbitrary regularity $s>s_{\alpha}=(17-2\alpha)/12$. Indeed, we will proceed employing an inductive argument.

Let $\epsilon'>0$, $b'\geq 5 \epsilon'$ and $k\geq 2$ be given, we shall assume by the inductive hypothesis
\begin{equation}\label{eqindhyp}
\begin{aligned}
\sup_{t\in[0,T]}\|\chi_{\epsilon',b'}J^{\widetilde{s}}_xu(t)&\|_{L^2_{xy}}+\|(\chi_{\epsilon',b'}\chi_{\epsilon',b'}')^{1/2} J^{\widetilde{s}}_x u\|_{L^2(\mathbb{R}^2\times (0,T))}\\
&+
\|(\chi_{\epsilon',b'}\chi_{\epsilon',b'}')^{1/2} D^{\frac{1+\alpha}{2}}_x J^{\widetilde{s}}_xu\|_{L^2(\mathbb{R}^2\times (0,T))} \\
&+\|(\chi_{\epsilon',b'}\chi_{\epsilon',b'}')^{1/2} \partial_y J^{\widetilde{s}}_xu \|_{L^2(\mathbb{R}^2\times (0,T))} \\
&\leq \widetilde{c}_{\widetilde{s}}(\epsilon',b',T,\|\chi_{\epsilon',b'}J^{\widetilde{s}}_x u_0\|_{L^2}, \|u\|_{L^{\infty}_T H^{s_{\alpha}^{+}}},\|\partial_x u\|_{L^1_TL^{\infty}_{xy}}),
\end{aligned}
\end{equation}
whenever $l-(l-2)\big(\frac{1-\alpha}{2}\big)\leq \widetilde{s} < l+1-(l-1)\big(\frac{1-\alpha}{2}\big)$ if $2\leq l<k$, (which holds when $k>2$), or $\widetilde{s}<2$ when $k=2$. It is worth pointing out that the second term on the l.h.s of \eqref{eqindhyp} corresponds to the estimate for $A_1$ in \eqref{identnonl} for $\bar{s}=\widetilde{s}$, while the third and fourth
terms are the smoothing effect granted by the dispersion $-D^{\alpha+1}\partial_x+\partial_x\partial_y^2$ in the equation in \eqref{BOZK}. In addition, we remark that by Lemma \ref{fraclemma2} (III), and the hypothesis $\mathbbm{1}_{\{x>0\}}J^{s}_x u_0 \in L^{2}(\mathbb{R}^2)$, we have
\begin{equation*}
\begin{aligned}
\|\chi_{\epsilon',b'}J^{\widetilde{s}}_x u_0\|_{L^2_{xy}} \lesssim \|\mathbbm{1}_{\{x>0\}}J^{s}_x u_0\|_{L^2_{xy}}+\|u_0\|_{L^2_{xy}},
\end{aligned}
\end{equation*}
whenever $\widetilde{s}\in [0,s]$, which justify the validity of the implicit energy estimates behind the inductive hypothesis.

Setting in \eqref{identnonl}, $\epsilon>0$, $b\geq 5\epsilon$ and $\bar{s}=s$ with $k-(k-2)\big(\frac{1-\alpha}{2}\big)\leq s < k+1-(k-1)\big(\frac{1-\alpha}{2}\big)$, $k\geq 2$, the desired estimate is obtained after controlling the respective factors $A_1,A_2$ and $A_3$.

{\bf Estimate for $A_1$.} We consider the functions $\theta_{1,\epsilon,b},\theta_{2,\epsilon,b}$ and $\theta_{3,\epsilon,b}$ determined by \eqref{decompfunct} with $\theta_{j,\epsilon,b}=\theta_{j,\epsilon,b}(x+vt)$, $j=1,2,3$. By assumption \eqref{eqindhyp} with $\widetilde{s}=s-1+\frac{1-\alpha}{2}$, $\epsilon'=\epsilon/24$ and $b'=b+13\epsilon/12$, we infer
\begin{equation*}
\theta_{1,\epsilon,b} J^{s-1+\frac{\alpha-1}{2}}_xu, \, \, \theta_{1,\epsilon,b} D_x^{\frac{\alpha+1}{2}}J^{s-1+\frac{\alpha-1}{2}}_x u \in L^{2}(\mathbb{R}^2\times(0,T)).
\end{equation*}
Then, by Lemma \ref{fraclemma2} (I) and similar considerations to \eqref{eqmainT7},  the above consequence establishes
\begin{equation}\label{eqmainT9.1}
\theta_{2,\epsilon,b} J^{s}_x u \in L^{2}(\mathbb{R}^2\times(0,T)),
\end{equation}
and so from Lemma \ref{fraclemma2} (III),
\begin{equation*}
\theta_{3,\epsilon,b} J^{s_{\ast}}_x u \in L^{2}(\mathbb{R}^2\times(0,T)), \, \text{  whenever }  \, s_{\ast}\in (0,s].
\end{equation*}
Since $\theta_{3,\epsilon,b}(x+vt)=1$ on $[\epsilon-vt,b-vt]$, the above display yields the desired estimate for $A_1(t)$.

{\bf Estimate for $A_2$ and $A_3$.} To estimate $A_2$ and $A_3$, by previous arguments relaying on Lemma \ref{estimatA2} and Remark \ref{remarA3}, it is enough to control the following expression
\begin{equation}\label{eqmainT10}
\begin{aligned}
\sum_{0\leq j \leq s-s_{\alpha}} \|\chi_{\epsilon,b}J^{s-j}_xu\|_{L^2_{xy}}+\|J^{s-j}_x(u \phi_{\epsilon,b})\|_{L^2_{xy}}+\|J^{\bar{s}}_x(u \widetilde{\phi}_{\epsilon,b})\|_{L^2_{xy}}.
\end{aligned}
\end{equation}
When $j=0$, $ \|\chi_{\epsilon,b}J^{s-j}_xu\|_{L^2_{xy}}$ is the quantity to be estimated by \eqref{identnonl} and Gronwall's lemma. Whereas $\|\chi_{\epsilon,b}J^{s-j}_xu\|_{L^2_{xy}}$ for $1<j\leq s-s_{\alpha}$ are controlled by the inductive hypothesis \eqref{eqindhyp}. 

On the other hand, by support consideration, there exists $\widetilde{\theta}_{2,\epsilon,b}\in C^{\infty}_c(\mathbb{R})$ with  $0\leq \widetilde{\theta}_{2,\epsilon,b} \leq 1$ such that $\dist(1-\theta_{2,\epsilon,b},\widetilde{\theta}_{2,\epsilon,b})\geq \epsilon/32$ and $\dist(1-\widetilde{\theta}_{2,\epsilon,b},\phi_{\epsilon,b})\geq \epsilon/32$. Thus, we apply Lemma \ref{fraclemma2} (IV) followed by part (III) to obtain
\begin{equation*}
\begin{aligned}
\|J^{s-j}_x(u \phi_{\epsilon,b})\|_{L^2_{xy}} &\lesssim \|\widetilde{\theta}_{2,\epsilon,b} J^{s-j}_x u\|_{L^2_{xy}}+\|u\|_{L^2_{xy}} \\
&\lesssim  \|\theta_{2,\epsilon,b} J^{s}_x u\|_{L^2_{xy}}+\|u\|_{L^2_{xy}},
\end{aligned}
\end{equation*}
which is controlled by \eqref{eqmainT9.1} for all integer $0\leq j \leq s-s_{\alpha}$. Noticing that $\supp(\widetilde{\phi}_{\epsilon,b}),\supp(\phi_{\epsilon,b})\subset[\epsilon/4,b]$, the estimate for $\|J^{s}_x(u \widetilde{\phi}_{\epsilon,b})\|_{L^2_{xy}}$ is obtained by the same argument above. These comments provide a control to \eqref{eqmainT10}, and so the estimates for $A_2$ and $A_3$ are complete.

Finally, gathering the previous results and applying Gronwall's inequality, we deduce \eqref{eqindhyp} for $s\in [k-(k-2)\big(\frac{1-\alpha}{2}\big),k+1-(k-1)\big(\frac{1-\alpha}{2}\big))$, $k\geq 2$. This completes the inductive step and in consequence the proof of Theorem \ref{mainTheo}.


\section{Appendix: Proof of Theorem \ref{imprwellpos}}

This section is aimed to prove Theorem \ref{imprwellpos}. Since our arguments follow similar considerations employed in \cite{FRAC,OscarHBO,KenigKP}, we will state the main ingredients and differences needed to implement these ideas for the IVP \eqref{BOZK}. 

Let us first introduce some notation to be employed along with our arguments.

Consider $\varrho\in C_{c}^{\infty}(\mathbb{R})$ such that
\begin{equation*}
    0\leq \varrho \leq 1, \hspace{0.2cm} \varrho(\xi)=1 \text{ for } |\xi| \leq 1, \hspace{0.2cm} \varrho(\xi)=0 \text{ for } |\xi|\geq 2,
\end{equation*}
and $\varrho_0(\xi)=\varrho(\xi)-\varrho(2\xi)$ which is supported on $1/2\leq |\xi|\leq 2$. For any $f\in S(\mathbb{R}^2)$ and $j\in \mathbb{Z}$, we define the Littlewood-Paley projection operators
\begin{equation}\label{LPprojector}
    \begin{aligned}
    &\widehat{P_{j}^xf}(\xi,\eta)=\varrho_0(2^{-j}\xi)\widehat{f}(\xi,\eta). \\
    \end{aligned}
\end{equation}


\subsection{Strichartz estimates}

The homogeneous problem associated to \eqref{BOZK} is given by
\begin{equation}\label{LBOZK}
    \left\{\begin{aligned}
    &\partial_t u -D^{\alpha+1}_x u_x+u_{xyy} =0,\quad (x,y,t)\in \R^{3}, \, 0\leq\alpha\leq 1, \\
    &u(x,y,0)=\phi(x,y).
    \end{aligned}\right.
\end{equation}
Sufficiently regular solutions of \eqref{LBOZK} will be denoted as
\begin{equation*}
    S(t)\phi(x,y):=\int_{\mathbb{R}^2} \widehat{\phi}(\xi,\eta)e^{i\xi|\xi|^{\alpha+1}t+i\xi\eta^2t+ix\xi+iy\eta} \, d\xi d\eta, 
\end{equation*}
for all $t\in \mathbb{R}$. The result in \cite[Proposition 3]{RibVento} establishes the following estimate for solutions of \eqref{LBOZK}
\begin{equation}\label{eqappend1}
    \left\|P_j^x S(t)\phi\right\|_{L^{p}_{xy}}\lesssim \frac{2^{-\frac{j}{3}(\alpha+\frac{1}{2})(1-\frac{2}{p})}}{|t|^{\frac{5}{6}(1-\frac{2}{p})}} \left\|\phi\right\|_{L^{p'}_{xy}}, \hspace{0.2cm} \text{for all } j \in \mathbb{Z},
\end{equation}
 whenever $1/p+1/ p' =1$ with $p\geq 2$. Hence, an application of the $TT^{\ast}$-argument yields the estimate.

\begin{lemma}\label{STRE}
Let $0\leq \alpha <1$, $p<\infty$ and $j\in \mathbb{Z}$. Then the following estimate holds
\begin{equation}\label{STREeq1}
    \left\| P^x_jS(t)\phi\right\|_{L^{q}_t L^p_{xy}} \lesssim 2^{-\frac{j}{6}(\alpha+\frac{1}{2})(1-\frac{2}{p})} \left\|\phi\right\|_{L^2_{xy}},
\end{equation}
for all $\frac{2}{q}+\frac{5}{3p}=\frac{5}{6}$.
\end{lemma}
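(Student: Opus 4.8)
The plan is to run the standard $TT^*$ argument, turning the fixed-time dispersive decay \eqref{eqappend1} into a space-time bound by means of the one-dimensional Hardy--Littlewood--Sobolev inequality in the time variable. Set $T\phi(t):=P_j^x S(t)\phi$, regarded as an operator from $L^2_{xy}$ into $L^q_tL^p_{xy}$; proving \eqref{STREeq1} amounts to bounding $\|T\|_{L^2\to L^q_tL^p_{xy}}$ by $2^{-\frac{j}{6}(\alpha+\frac12)(1-\frac2p)}$. Since $S(t)$ is unitary on $L^2_{xy}$ with $S(t)^*=S(-t)$, and the symbol $\varrho_0(2^{-j}\xi)$ defining $P_j^x$ in \eqref{LPprojector} is real so that $(P_j^x)^*=P_j^x$, a direct computation gives the adjoint $T^*g=\int S(-s)P_j^x g(s)\,ds$, whence by the group law $S(t)S(-s)=S(t-s)$,
\[
TT^*g(t)=\int_{\mathbb{R}} P_j^x S(t-s)\big(P_j^x g(s)\big)\,ds.
\]
By the identity $\|T\|^2=\|TT^*\|$, it suffices to show that $TT^*$ maps $L^{q'}_tL^{p'}_{xy}$ into $L^q_tL^p_{xy}$ with norm $\lesssim 2^{-\frac{j}{3}(\alpha+\frac12)(1-\frac2p)}$, the square root of which is exactly the claimed factor.

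First I would dispose of the degenerate endpoint $p=2$, for which the admissibility relation forces $q=\infty$ and \eqref{STREeq1} reduces to the conservation law $\|P_j^xS(t)\phi\|_{L^2_{xy}}=\|P_j^x\phi\|_{L^2_{xy}}\le\|\phi\|_{L^2_{xy}}$, the exponential factor being $2^0=1$. For $2<p<\infty$ I would apply \eqref{eqappend1} with datum $P_j^xg(s)$ pointwise in $t$ and use Minkowski's integral inequality to obtain
\[
\|TT^*g(t)\|_{L^p_{xy}}\lesssim 2^{-\frac{j}{3}(\alpha+\frac12)(1-\frac2p)}\int_{\mathbb{R}}\frac{\|P_j^xg(s)\|_{L^{p'}_{xy}}}{|t-s|^{\frac56(1-\frac2p)}}\,ds.
\]
Here I would absorb the remaining projection by noting that $P_j^x$ acts in $x$ as convolution with $2^j\,\check{\varrho}_0(2^j\,\cdot)$, whose $L^1_x$-norm equals $\|\check{\varrho}_0\|_{L^1}$ independently of $j$; Young's inequality then yields $\|P_j^xg(s)\|_{L^{p'}_{xy}}\lesssim\|g(s)\|_{L^{p'}_{xy}}$ uniformly in $j$ and $s$.

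It then remains to apply the one-dimensional Hardy--Littlewood--Sobolev inequality to the convolution with $|t|^{-\gamma}$, where $\gamma=\tfrac56(1-\tfrac2p)$, acting on $h(s):=\|g(s)\|_{L^{p'}_{xy}}$. A scaling computation shows that $\big\||t|^{-\gamma}\ast h\big\|_{L^q_t}\lesssim\|h\|_{L^{q'}_t}$ holds precisely when $\gamma=\tfrac2q$; and the identity $\tfrac2q=\tfrac56(1-\tfrac2p)=\tfrac56-\tfrac{5}{3p}$ is exactly the admissibility condition $\tfrac2q+\tfrac{5}{3p}=\tfrac56$ in the statement. This produces $\|TT^*g\|_{L^q_tL^p_{xy}}\lesssim 2^{-\frac{j}{3}(\alpha+\frac12)(1-\frac2p)}\|g\|_{L^{q'}_tL^{p'}_{xy}}$, and taking square roots closes the argument.

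The one point that genuinely requires care is checking that this admissibility relation keeps the kernel exponent $\gamma$ in the open range $(0,1)$ \emph{and} the time exponent strictly above the Strichartz endpoint, i.e.\ $q>2$, so that the non-endpoint Hardy--Littlewood--Sobolev inequality suffices. Since $p\mapsto q$ ranges over $(\tfrac{12}{5},\infty)\subset(2,\infty)$ as $p$ runs in $(2,\infty)$, and correspondingly $\gamma\in(0,\tfrac56)$, the inequality always applies and no Keel--Tao endpoint machinery is needed.
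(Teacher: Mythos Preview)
Your proof is correct and follows exactly the approach indicated in the paper, which simply states that an application of the $TT^{\ast}$-argument to the dispersive estimate \eqref{eqappend1} yields \eqref{STREeq1}. You have filled in the standard details of that argument carefully, including the verification that the admissibility relation keeps $q>2$ so that only the non-endpoint Hardy--Littlewood--Sobolev inequality is needed.
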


The same argument in the proof of \cite[Proposition 3]{RibVento} establishes the following \emph{rough} dispersive estimate,
 \begin{equation}\label{eqappend1.1}
 \left\|P_j^xS(t)\phi\right\|_{L^{p}_{xy}}\lesssim \frac{2^{\frac{j}{2}(1-\frac{2}{p})}}{|t|^{\frac{1}{2}(1-\frac{2}{p})}} \left\|\phi\right\|_{L^{p'}_{xy}},    
\end{equation}
for all $j\in \mathbb{Z}$, $1/p+1/p'=1$ with $p\geq 2$. This result is convenient for our purposes to avoid the negative derivative carried by the right-hand side of \eqref{STREeq1} near the origin in the frequency domain.  Consequently, \eqref{eqappend1.1} provide the following estimate.
\begin{lemma}\label{STREROUGH}
Let $0\leq \alpha < 1$, $p<\infty$ and $j\in \mathbb{Z}$. Then it holds
\begin{equation}\label{STREeq1.1}
    \left\|P_j^x S(t)\phi\right\|_{L^{q}_t L^p_{xy}} \lesssim 2^{\frac{j}{4}(1-\frac{2}{p})}\left\|\phi\right\|_{L^2_{xy}},
\end{equation}
whenever $\frac{2}{q}+\frac{1}{p}=\frac{1}{2}$.
\end{lemma}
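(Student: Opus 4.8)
The plan is to run the standard $TT^*$ argument, exactly as indicated in the text for Lemma \ref{STRE}, but feeding in the \emph{rough} dispersive bound \eqref{eqappend1.1} in place of \eqref{eqappend1}. First I would fix $j\in\mathbb{Z}$ and regard the operator $T\colon L^2_{xy}\to L^q_tL^p_{xy}$ given by $T\phi(t)=P^x_jS(t)\phi$. Since $S(t)$ is unitary on $L^2_{xy}$ with $S(t)^*=S(-t)$, and $P^x_j$ is a self-adjoint Fourier multiplier commuting with $S(t)$, the identity $\|T\|^2=\|TT^*\|$ reduces the claim, with operator norm $\lesssim C_j$, to the dual-weighted bound $\|TT^*F\|_{L^q_tL^p_{xy}}\lesssim C_j^2\,\|F\|_{L^{q'}_tL^{p'}_{xy}}$, where
\[
TT^*F(t)=\int_{\mathbb{R}}P^x_jS(t-t')P^x_jF(t')\,dt'.
\]

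Next I would estimate the kernel pointwise in time. Applying \eqref{eqappend1.1} to the inner function $P^x_jF(t')$, and using that $P^x_j$ is bounded on $L^{p'}_{xy}$ uniformly in $j$, I obtain
\[
\big\|P^x_jS(t-t')P^x_jF(t')\big\|_{L^p_{xy}}\lesssim \frac{2^{\frac{j}{2}(1-\frac{2}{p})}}{|t-t'|^{\frac{1}{2}(1-\frac{2}{p})}}\,\|F(t')\|_{L^{p'}_{xy}}.
\]
Taking $L^p_{xy}$ norms under the integral and then the $L^q_t$ norm reduces the whole matter to the one-dimensional fractional integration inequality with kernel $|t|^{-\gamma}$, $\gamma:=\tfrac12\big(1-\tfrac2p\big)$, applied to the scalar function $t'\mapsto\|F(t')\|_{L^{p'}_{xy}}$.

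Then I would invoke the Hardy--Littlewood--Sobolev inequality, which yields $\big\||t|^{-\gamma}\ast g\big\|_{L^q_t}\lesssim \|g\|_{L^{q'}_t}$ precisely when $\tfrac1q=\tfrac1{q'}+\gamma-1$, i.e. $\tfrac2q=\gamma=\tfrac12\big(1-\tfrac2p\big)$, which is exactly the admissibility relation $\tfrac2q+\tfrac1p=\tfrac12$ in the statement. This produces $\|TT^*F\|_{L^q_tL^p_{xy}}\lesssim 2^{\frac{j}{2}(1-\frac{2}{p})}\|F\|_{L^{q'}_tL^{p'}_{xy}}$, so that $C_j^2=2^{\frac{j}{2}(1-\frac2p)}$ and therefore $C_j=2^{\frac{j}{4}(1-\frac2p)}$, as claimed.

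The argument is routine, so there is no serious obstacle; the only points needing care are verifying that the Hardy--Littlewood--Sobolev exponents are admissible throughout the relevant range and isolating the degenerate endpoint. Concretely, for $2<p<\infty$ one checks $0<\gamma<\tfrac12<1$ and $q>2$ (indeed $q>4$), so the inequality applies. The endpoint $p=2$ must be treated separately: there $\gamma=0$ and $q=\infty$, and the conclusion is simply the $L^2$-conservation $\|P^x_jS(t)\phi\|_{L^\infty_tL^2_{xy}}\le\|\phi\|_{L^2_{xy}}$, requiring no fractional integration. I would dispose of this case at the outset and then run the $TT^*$ scheme above for $p>2$.
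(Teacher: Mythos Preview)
Your proposal is correct and follows precisely the approach the paper indicates: the paper simply states that \eqref{eqappend1.1} ``provide[s] the following estimate'' via the $TT^\ast$-argument, without spelling out the details, and you have supplied exactly those details (Hardy--Littlewood--Sobolev with $\gamma=\tfrac12(1-\tfrac2p)$, the admissibility check, and the trivial $p=2$ endpoint). There is nothing to add.
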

Notice that the case $(q,p)=(2,\infty)$ is not part of the conclusions in Lemmas \ref{STRE} and \ref{STREROUGH}. Accordingly, we require some additional regularity to control this norm.
\begin{cor}\label{liestCoro}
Let $0\leq \alpha < 1$ and $j\in\mathbb{Z}$. For each $T>0$, $0<\delta<1$ and $\theta \in [0,1]$, there exists $\widetilde{k}_{\delta,\theta}\in(0,1/2)$ such that
\begin{equation}\label{STREeq2}
    \left\|P_j^xS(t)f\right\|_{L^2_T L^{\infty}_{xy}}\leq c_{\delta} T^{\widetilde{k}_{\delta,\theta}}2^{\max\{j(\frac{1}{4}-\frac{\theta}{3}(1+\frac{\alpha}{2})),j(\frac{1}{4}-\frac{\theta}{3}(1+\frac{\alpha}{2}))(1-\delta)\}}\left\|J^{\delta}f\right\|_{L^2_{xy}}.
\end{equation}
\end{cor}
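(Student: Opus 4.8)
The plan is to reach the inadmissible endpoint $(q,p)=(2,\infty)$ by a three-step scheme: a frequency-localized Sobolev embedding that trades $L^\infty_{xy}$ for a finite $L^p_{xy}$ norm at the cost of $J^{\delta}$, an interpolation of the two Strichartz estimates \eqref{STREeq1} and \eqref{STREeq1.1} to produce the frequency weight, and finally H\"older's inequality in time to descend from an admissible exponent $q>2$ to $q=2$, which is precisely what generates the factor $T^{\widetilde{k}_{\delta,\theta}}$.

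First I would interpolate Lemma \ref{STRE} and Lemma \ref{STREROUGH}. For a fixed $p<\infty$, Lemma \ref{STRE} bounds $P_j^xS(t)\phi$ with admissible exponent $q_1$ satisfying $\frac{2}{q_1}=\frac{5}{6}\big(1-\frac{2}{p}\big)$ and weight $2^{-\frac{j}{6}(\alpha+\frac12)(1-\frac2p)}$, while Lemma \ref{STREROUGH} does so with $q_2$ satisfying $\frac{2}{q_2}=\frac{1}{2}\big(1-\frac{2}{p}\big)$ and weight $2^{\frac{j}{4}(1-\frac2p)}$. Since both estimates control the \emph{same} space–time function with values in the fixed Banach space $L^p_{xy}$, log-convexity of the $L^q_T$ norms in time gives, for $\frac1q=\frac{\theta}{q_1}+\frac{1-\theta}{q_2}$,
\[
\left\|P_j^x S(t)\phi\right\|_{L^q_T L^p_{xy}}\lesssim 2^{jE(1-\frac2p)}\left\|\phi\right\|_{L^2_{xy}},\qquad E:=\tfrac14-\tfrac{\theta}{3}\big(1+\tfrac{\alpha}{2}\big),
\]
where a short computation shows $\frac2q=\big(\frac12+\frac\theta3\big)\big(1-\frac2p\big)$ and that the interpolated weight coefficient is exactly $E$.

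Next, since $\delta>0$, I would fix $p$ with $2/p<\delta$ (for concreteness $p=4/\delta$) and invoke the two-dimensional Sobolev embedding $\|h\|_{L^\infty(\mathbb{R}^2)}\lesssim \|J^{\delta}h\|_{L^p(\mathbb{R}^2)}$, valid because $\delta>2/p$; its constant is finite and depends on $\delta$, producing $c_\delta$. Applying this with $h=P_j^xS(t)f$, using that $J^{\delta}$ commutes with both $P_j^x$ and $S(t)$, taking the $L^q_T$ norm, and inserting the interpolated Strichartz bound with $\phi=J^{\delta}f$ yields
\[
\left\|P_j^x S(t)f\right\|_{L^q_T L^\infty_{xy}}\lesssim \left\|P_j^x S(t)J^{\delta}f\right\|_{L^q_T L^p_{xy}}\lesssim 2^{jE(1-\frac2p)}\left\|J^{\delta}f\right\|_{L^2_{xy}}.
\]
Finally, because $\frac2q=\big(\frac12+\frac\theta3\big)\big(1-\frac2p\big)<1$ forces $q>2$, H\"older on $[0,T]$ gives $\|\cdot\|_{L^2_T}\le T^{\frac12-\frac1q}\|\cdot\|_{L^q_T}$ with $\widetilde{k}_{\delta,\theta}:=\frac12-\frac1q\in(0,\tfrac12)$, and the choice $2/p=\delta/2$ makes the exponent $jE\big(1-\frac2p\big)=jE\big(1-\frac\delta2\big)$, which one checks is $\le\max\{jE,\,jE(1-\delta)\}$ for every $j\in\mathbb{Z}$ by separating the cases $jE\ge0$ and $jE<0$.

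The main obstacle is conceptual rather than computational: the endpoint $(q,p)=(2,\infty)$ is genuinely inadmissible for both \eqref{STREeq1} and \eqref{STREeq1.1}, so $L^2_T L^\infty_{xy}$ cannot be estimated directly. The device that resolves it is to spend the full isotropic regularity $J^{\delta}$ on a two-dimensional Sobolev embedding, rather than a Bernstein inequality in $x$, which would cost an extra $2^{j/p}$ and spoil the exponent; this way $J^{\delta}$ simply commutes past the flow and lands on the data, while the residual admissible exponent $q>2$ is absorbed into the harmless factor $T^{\widetilde{k}_{\delta,\theta}}$. The only care needed is to verify that a single admissible choice of $p$ (equivalently of $1-2/p\in(1-\delta,1)$) produces a frequency exponent dominated by the stated maximum uniformly in $j$.
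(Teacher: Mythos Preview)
Your proof is correct and follows essentially the same approach as the paper: Sobolev embedding $L^\infty(\mathbb{R}^2)\hookleftarrow W^{\delta,p}(\mathbb{R}^2)$ with $\delta>2/p$, the two Strichartz estimates \eqref{STREeq1} and \eqref{STREeq1.1}, H\"older in time to pass from $L^q_T$ to $L^2_T$, and an interpolation with parameter $\theta$. The only cosmetic difference is the order of operations: the paper first combines Sobolev and H\"older with each Strichartz estimate separately to obtain two $L^2_TL^\infty_{xy}$ bounds and then interpolates those, whereas you interpolate the Strichartz estimates at the $L^q_TL^p_{xy}$ level and afterwards apply Sobolev and H\"older once; the outcome and the exponents coincide.
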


\begin{proof}
Let us consider $p<\infty$ sufficiently large to assure that $\delta> \frac{2}{p}$. Combining Sobolev's embedding and \eqref{STREeq1}, we get
\begin{equation}\label{STREeq1.2}
\begin{aligned}
    \|P_j^x & S(t) f \|_{L^2_T L^{\infty}_{xy}} \\
    &\lesssim_{\delta} T^{\frac{q-2}{2q}} \left\| P_j^xS(t)J^{\delta}f\right\|_{L^q_T L^p_{xy}} \lesssim_{\delta}  T^{\frac{q-2}{2q}} 2^{-\frac{j}{6}(\alpha+\frac{1}{2})(1-\frac{2}{p})} \left\|J^{\delta}f\right\|_{L^2_{xy}},
\end{aligned}
\end{equation}
and using \eqref{STREeq1.1} instead, 
\begin{equation}\label{STREeq1.3}
\begin{aligned}
    \|P_j^x & S(t) f \|_{L^2_T L^{\infty}_{xy}} \\
    &\lesssim_{\delta} T^{\frac{\widetilde{q}-2}{2q}} \left\| P_j^xS(t)J^{\delta}f\right\|_{L^{\widetilde{q}}_T L^p_{xy}} \lesssim_{\delta}  T^{\frac{\widetilde{q}-2}{2\widetilde{q}}}  2^{\frac{j}{4}(1-\frac{2}{p})}\left\|J^{\delta}f\right\|_{L^2_{xy}},
\end{aligned}
\end{equation}
where $\frac{2}{\widetilde{q}}+\frac{5}{3p}=\frac{5}{6}$ and $\frac{2}{\widetilde{q}}+\frac{1}{p}=\frac{1}{2}$. 
 Interpolating \eqref{STREeq1.2} and \eqref{STREeq1.3} yields \eqref{STREeq2}.
\end{proof}

The preceding conclusion is essential to derive the following Strichartz estimate, which is proved in much the same way as in \cite{KenigKP}.

\begin{lemma} \label{refinStri}
Assume $0\leq \alpha < 1$, $0<\delta <1$, $T>0$ and $s>s_{\alpha}-1=(17-2\alpha)/12-1$. Then, there exists $k_{\delta}\in (\frac{1}{2},1)$ such that
\begin{equation}\label{STREeq3.1}
\begin{aligned}
    \| w &\|_{L^1_TL^{\infty}_{xy}}\\
    &\lesssim_{\delta} T^{k_{\delta}} \big(\sup_{t\in[0,T]}\left\|J^{s+\delta}_x J^{\delta}w(t)\right\|_{L^2_{xy}}+\int_0^T \left\|J^{s-1+\delta}_x J^{\delta}F(\cdot,\tau) \right\|_{L^2_{xy}} \, d\tau\big),
\end{aligned}
\end{equation}
whenever $w$ is a solution of $\partial_tw-D_{x}^{\alpha+1} w_x+ w_{xyy}=F$.
\end{lemma}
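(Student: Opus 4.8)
The plan is to combine Duhamel's formula with a Littlewood--Paley decomposition in the $x$--frequency and the linear estimate of Corollary~\ref{liestCoro}. First I would write the solution via Duhamel as
\[
w(t)=S(t)w(0)+\int_0^t S(t-\tau)F(\tau)\,d\tau,
\]
and reduce the claim to controlling the $L^2_TL^\infty_{xy}$--norm of each piece: H\"older's inequality in time upgrades an $L^2_T$ bound to an $L^1_T$ bound at the cost of a factor $T^{1/2}$, and since Corollary~\ref{liestCoro} already carries a power $T^{\widetilde k_{\delta,\theta}}$ with $\widetilde k_{\delta,\theta}\in(0,1/2)$, the resulting exponent $k_\delta=\tfrac12+\widetilde k_{\delta,\theta}$ lands in $(1/2,1)$, exactly as asserted. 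For the homogeneous part $S(t)w(0)$ I would estimate directly, using $\sup_{t}\|J^{s+\delta}_xJ^\delta w(t)\|_{L^2}\ge\|J^{s+\delta}_xJ^\delta w(0)\|_{L^2}$.

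Next I would decompose the datum (resp.\ the forcing) as $\sum_j P_j^x(\cdot)$ and bound each dyadic block through Corollary~\ref{liestCoro},
\[
\|P_j^x S(\cdot)g\|_{L^2_TL^\infty_{xy}}\lesssim_\delta T^{\widetilde k_{\delta,\theta}}\,2^{\max\{jc,\,jc(1-\delta)\}}\,\|J^\delta P_j^x g\|_{L^2_{xy}},\qquad c=\tfrac14-\tfrac{\theta}{3}\Big(1+\tfrac{\alpha}{2}\Big),
\]
splitting the sum into low ($j<0$) and high ($j\ge 0$) frequencies. For $j<0$ I would take $\theta$ small so that the rough estimate of Lemma~\ref{STREROUGH} gives a positive multiple of $j$ in the exponent, hence summability as $j\to-\infty$ and, crucially, no negative power of $2^{j}$ near the origin in frequency (here the Bessel weight $J^{s+\delta}_x$ is harmless since $|\xi|\lesssim 1$). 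For $j\ge 0$ I would take $\theta$ close to $1$ to make $c$ as negative as possible, so that the sharp dispersive decay of Lemma~\ref{STRE} compensates the positive powers carried by $J^{s+\delta}_x$ (resp.\ $J^{s-1+\delta}_x$). The blocks are then reassembled by Cauchy--Schwarz in $j$ together with the almost orthogonality $\sum_j\|J^{s+\delta}_xJ^\delta P_j^x g\|_{L^2}^2\sim\|J^{s+\delta}_xJ^\delta g\|_{L^2}^2$; on the high--frequency side the relation $\|J^\delta P_j^x g\|_{L^2}\sim 2^{-j(s+\delta)}\|J^{s+\delta}_xJ^\delta P_j^x g\|_{L^2}$ converts the block bound into a geometric factor, and convergence of the resulting series, after optimizing $\theta\in[0,1]$ in terms of $\alpha$ and $\delta$, is what pins down the threshold.

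The main obstacle is the forcing term. It is measured with one fewer $x$--derivative, $J^{s-1+\delta}_x$, than the datum, $J^{s+\delta}_x$, so its reassembly demands strictly more decay from the Strichartz gain than the homogeneous part does. A naive treatment of the Duhamel integral (Minkowski's inequality in $\tau$ followed by the homogeneous estimate applied to each $F(\tau)$) only produces the norm $\int_0^T\|J^{s+\delta}_xJ^\delta F\|_{L^2}\,d\tau$ and, when one insists on $J^{s-1+\delta}_x$ instead, the geometric series over $j$ converges only for $\delta$ larger than an $\alpha$--dependent threshold, not for all $\delta\in(0,1)$. The extra half derivative must therefore be extracted from the time integration itself, i.e.\ from the local smoothing encoded in the dispersive bounds \eqref{eqappend1}--\eqref{eqappend1.1}; matching this gain against the best admissible interpolation parameter coming from Lemmas~\ref{STRE}--\ref{STREROUGH} is precisely what forces the regularity restriction $s>s_\alpha-1$.

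Finally, I would keep careful track of the time exponents throughout, so that the $L^2_t$ nature of Corollary~\ref{liestCoro}, the H\"older passage to $L^1_t$, and the $\tau$--integration in the Duhamel term combine into a single power $T^{k_\delta}$ with $k_\delta\in(1/2,1)$, and then collect the homogeneous and inhomogeneous contributions to conclude the stated bound. I expect the derivative bookkeeping of the forcing term, rather than any single estimate, to be the delicate point.
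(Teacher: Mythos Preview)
Your diagnosis is accurate but the cure is missing. You correctly observe that applying Duhamel on the full interval $[0,T]$ and then invoking Corollary~\ref{liestCoro} blockwise can only produce $\int_0^T\|J^{s+\delta}_xJ^\delta F\|_{L^2}\,d\tau$ on the forcing, one derivative too many; and you correctly say that the missing derivative has to come from the time integration. But you do not say how to extract it, and the vague appeal to ``local smoothing encoded in the dispersive bounds \eqref{eqappend1}--\eqref{eqappend1.1}'' does not do it: those pointwise-in-$t$ decay estimates have already been fully cashed in when passing to Corollary~\ref{liestCoro}, and no further smoothing is available from them at that stage.

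The mechanism the paper uses (and which your outline lacks) is the short-time Strichartz trick of Koch--Tzvetkov and Kenig--K\"onig: for $j>0$ one partitions $[0,T]$ into $\sim 2^{j}$ subintervals $I_m$ of length $\sim T2^{-j}$, writes $\|P_j^xw\|_{L^1_TL^\infty}\le (T2^{-j})^{1/2}\sum_m\|P_j^xw\|_{L^2_{I_m}L^\infty}$, and applies Duhamel \emph{from the left endpoint $a_m$ of each $I_m$} before using Corollary~\ref{liestCoro} on $I_m$. Summing over $m$, the homogeneous pieces pick up a factor $2^{j}$ (there are that many $a_m$'s), while the inhomogeneous integrals $\int_{I_m}$ simply reassemble into $\int_0^T$ with no such loss. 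The net effect is that the forcing term carries an extra factor $2^{-j}$ relative to the data term, which is exactly the one-derivative gap between $J^{s+\delta}_x$ and $J^{s-1+\delta}_x$. Without this frequency-dependent time chopping your high-frequency sum does not close for all $\delta\in(0,1)$, and this is precisely where the threshold $s>s_\alpha-1$ enters. Your low-frequency treatment ($\theta$ small, Duhamel on the full interval) is fine and matches the paper.
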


\begin{proof} Recalling the projectors introduced in \eqref{LPprojector}, an application of the triangle inequality reduces our considerations to control the r.h.s of the following expression
\begin{equation}\label{STREeq4}
    \left\|w\right\|_{L^1_T L^{\infty}_{xy}} \lesssim \sum_{j}  \left\| P_j^x w\right\|_{L^1_T L^{\infty}_{xy}}.
\end{equation}
Since $P_{j}^xw$ satisfies the integral equation
\begin{equation*}
    P_{j}^xw(t)=S(t)P_{j}^xw(0)+\int_{0}^t S(t-\tau)P_{j}^xF(\cdot,\tau)\, d\tau,
\end{equation*}
by writing $P_j^x=\widetilde{P}_j^xP_j^x$ for some adapted projection $\widetilde{P}_j^x$, we first apply H\"older's inequality and then Corollary \ref{liestCoro} with $\theta=0$ to get
\begin{equation*}
    \begin{aligned}
    &\left\| P_{j}^xw\right\|_{L^1_T L^{\infty}_{xy}} \\
    & \hspace{0.5cm}\lesssim T^{1/2+\widetilde{k}_{\delta,0}}\, 2^{\frac{j}{4}(1-\delta)}\big( \left\|J^{\delta}P_{j}^xw(0)\right\|_{L^2_{xy}}+\int_0^T \left\|J^{\delta}P_{j}^xF(\tau)\right\|_{L^2_{xy}} \, d\tau \big) \\
    & \hspace{0.5cm} \lesssim T^{1/2+\widetilde{k}_{\delta,0}} \, 2^{\frac{j}{4}(1-\delta)}  \big(\sup_{t\in[0,T]} \left\|J^{\delta}w(t)\right\|_{L^2_{xy}}+\int_0^T \left\|J^{\delta}F(\tau)\right\|_{L^2_{xy}} \, d\tau \big),
    \end{aligned}
\end{equation*}
for each $j\leq 0$. Adding the above expression over $j\leq 0$, we derived the desired estimate for these indexes.

To bound the remaining sum on the right-hand side of \eqref{STREeq4}, let us consider $j>0$ and we split the interval $[0,T]=\bigcup_m I_m$, where $I_m=[a_m,b_m]$ and $(b_m-a_m) =c T/2^j$. As a consequence 
\begin{equation}\label{liest5}
    \left\|P_j^x w \right\|_{L^1_T L^{\infty}_{xy}}  \lesssim \frac{T^{1/2}}{2^{j/2}}  \sum_{m} \left\|P_j^x w\right\|_{L^2_{I_m} L^{\infty}_{xy}}.
\end{equation}
By employing Duhamel's formula on each $I_m$, we obtain 
\begin{equation*}
    P_{j}^xw(t)=S(t-a_m)P_j^xw(\cdot,a_m)+\int_{a_k}^t S(t-\tau)P_j^xF(\cdot,\tau)\, d\tau,
\end{equation*}
whenever $t\in I_m$. Then, \eqref{liest5} and Corollary \ref{liestCoro} with $\theta=1$ show 
\begin{equation}\label{STREeq5}
    \begin{aligned}
    \|& P_j^xw \|_{L^1_T L^{\infty}_x} \\
    & \hspace{0.2cm}\lesssim  T^{1/2+\widetilde{k}_{\delta,1}} 2^{-\frac{j}{2}-\frac{j}{6}(\alpha+\frac{1}{2})(1-\delta)}  \sum_{m} \big( \left\|J^{\delta}P_j^xw(a_m)\right\|_{L^2_{xy}}\\
    &\hspace{6.2cm} + \int_{I_m} \left\|J^{\delta}P_j^xF(\tau)\right\|_{L^2_{xy}}\, d\tau \big) \\
    & \hspace{0.2cm}\lesssim  T^{1/2+\widetilde{k}_{\delta,1}} 2^{-\frac{j\delta}{2}} \big( \sup_{t\in [0,T]} \left\|J_x^{\frac{1}{2}-\frac{1}{6}(\alpha+\frac{1}{2})(1-\delta)+\delta/2}J^{\delta}w(t)\right\|_{L^2_{xy}} \\
    &\hspace{3.2cm}+\int_0^T \left\|J^{-\frac{1}{2}-\frac{1}{6}(\alpha+\frac{1}{2})(1-\delta)+\delta/2}_xJ^{\delta}F(\tau)\right\|_{L^2_{xy}}\, d\tau\big).
    \end{aligned} 
\end{equation}
Summing the above expression over $j>0$, using that $s>(17-2\alpha)/12-1=1/2-1/6(\alpha+1/2)$ and that $\frac{1}{6}(\alpha+\frac{1}{2})\delta <\frac{\delta}{2}$, we complete the proof.
\end{proof}

As a further consequence of Lemma \ref{refinStri}, for $0\leq \alpha < 1$, $0<\delta \leq 1$, $T>0$ and $s>s_{\alpha}$, we find that
\begin{equation}\label{STREeq3}
\begin{aligned}
    \|\partial_x w &\|_{L^1_TL^{\infty}_{xy}} \\
    &\lesssim_{\delta} T^{k_{\delta}} \big(\sup_{t\in[0,T]}\left\|J_x^{s +\delta}J^{\delta}w(t)\right\|_{L^2}+\int_0^T \left\|J^{s-1+\delta}_x J^{\delta} F(\cdot,\tau) \right\|_{L^2_{xy}} \, d\tau\big),
\end{aligned}
\end{equation}
and  
\begin{equation}\label{STREeq3.2}
\begin{aligned}
    \|\partial_y w &\|_{L^1_TL^{\infty}_{xy}} \\
    &\lesssim_{\delta} T^{k_{\delta}} \big(\sup_{t\in[0,T]}\left\|J^{s-1 +\delta}_xJ^{1+\delta}w(t)\right\|_{L^2_{xy}}\\
    &\hspace{4cm}+\int_0^T \left\|J^{s-2+\delta}_xJ^{1+\delta}F(\cdot,\tau) \right\|_{L^2_{xy}} \, d\tau\big),
\end{aligned}
\end{equation}
for some $k_{\delta}\in (\frac{1}{2},1)$ and where $w$ solves the equation $\partial_tw-D_{x}^{\alpha+1} w_x+ w_{xyy}=F$.


\subsection{Energy Estimates}

Whenever $s>2$, Theorem \ref{imprwellpos} follows by a parabolic regularization argument on \eqref{BOZK}. Roughly speaking, an additional term $-\mu \Delta u$ is added to the equation, after which the limit $\mu \to 0$ is taken. These results follow the same arguments in \cite{CUNHAPAS,Iorio}, so we omit its proof. 

\begin{lemma}\label{comwellp}
Let $s> 2$ and $0\leq \alpha\leq 1$. Then for any $u_0 \in H^s(\mathbb{R}^2)$, there exists $T=T(\left\|u_0\right\|_{H^s})>0$ and a unique solution $u\in C([0,T]; H^s(\mathbb{R}^d))$ of the IVP \eqref{BOZK}. Additionally, the flow-map $u_0 \mapsto u(t)$ is continuous in the $H^s$-norm.
\end{lemma}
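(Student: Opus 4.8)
The plan is to establish Lemma \ref{comwellp} by the classical parabolic (artificial viscosity) regularization, following \cite{CUNHAPAS,Iorio}. For $\mu>0$, I would add the dissipative term $-\mu\Delta u$ to the equation in \eqref{BOZK} and consider
\begin{equation*}
\partial_t u^{\mu}-\mu\Delta u^{\mu}-D_x^{\alpha+1}\partial_x u^{\mu}+\partial_x\partial_y^2 u^{\mu}+u^{\mu}\partial_x u^{\mu}=0,\qquad u^{\mu}(0)=u_0.
\end{equation*}
Let $S_{\mu}(t)=e^{tL_{\mu}}$ be the semigroup generated by the full linear part $L_{\mu}=\mu\Delta+D_x^{\alpha+1}\partial_x-\partial_x\partial_y^2$, whose Fourier symbol is $-\mu(\xi^2+\eta^2)+i(\xi|\xi|^{\alpha+1}+\xi\eta^2)$. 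Because the real part of this symbol equals $-\mu(\xi^2+\eta^2)\le 0$, the semigroup is bounded on every $H^{s}$ and enjoys the parabolic smoothing bound $\|S_{\mu}(t)f\|_{H^{s+\sigma}}\lesssim (\mu t)^{-\sigma/2}\|f\|_{H^{s}}$ for $\sigma\ge 0$. Writing the equation in Duhamel form and placing the single derivative of the quadratic nonlinearity on the semigroup,
\begin{equation*}
u^{\mu}(t)=S_{\mu}(t)u_0-\frac12\int_0^t S_{\mu}(t-\tau)\partial_x\big((u^{\mu})^2\big)(\tau)\,d\tau,
\end{equation*}
the smoothing with $\sigma=1$ produces an integrable singularity $(\mu t)^{-1/2}$, so a standard contraction argument yields, for each fixed $\mu>0$, a unique local solution $u^{\mu}\in C([0,T_{\mu}];H^{s}(\mathbb{R}^2))$.

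The decisive step is to obtain energy estimates uniform in $\mu$. Applying $J^{s}$ to the regularized equation, multiplying by $J^{s}u^{\mu}$ and integrating over $\mathbb{R}^2$, the dissipative term contributes $-\mu\|\nabla J^{s}u^{\mu}\|_{L^2}^2\le 0$. The dispersive terms drop out entirely: $D_x^{\alpha+1}\partial_x$ and $\partial_x\partial_y^2$ are constant-coefficient Fourier multipliers with the purely imaginary symbols $i\xi|\xi|^{\alpha+1}$ and $-i\xi\eta^2$, so they commute with $J^{s}$ and are skew-adjoint on $L^2(\mathbb{R}^2)$. The only genuine contribution is the nonlinear one; writing
\begin{equation*}
\int J^{s}(u^{\mu}\partial_x u^{\mu})\,J^{s}u^{\mu}=\int [J^{s},u^{\mu}]\partial_x u^{\mu}\,J^{s}u^{\mu}-\frac12\int \partial_x u^{\mu}\,(J^{s}u^{\mu})^2,
\end{equation*}
both terms are controlled by the Kato--Ponce commutator estimate (Lemma \ref{conmKP}) together with the embedding $H^{s}(\mathbb{R}^2)\hookrightarrow W^{1,\infty}(\mathbb{R}^2)$ valid for $s>2$, yielding a bound of the form $\lesssim \|\nabla u^{\mu}\|_{L^{\infty}}\|u^{\mu}\|_{H^{s}}^2\lesssim \|u^{\mu}\|_{H^{s}}^3$. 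Hence $\frac{d}{dt}\|u^{\mu}\|_{H^{s}}^2\lesssim \|u^{\mu}\|_{H^{s}}^3$, and comparison with the solution of the associated ordinary differential equation furnishes a time $T=T(\|u_0\|_{H^{s}})>0$, \emph{independent of $\mu$}, on which $\sup_{0\le t\le T}\|u^{\mu}(t)\|_{H^{s}}\le 2\|u_0\|_{H^{s}}$.

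With these uniform bounds I would pass to the limit $\mu\to 0$. An $L^2$ estimate for the difference $w=u^{\mu}-u^{\mu'}$ --- in which the dispersive terms again cancel and the nonlinear contribution is dominated using the uniform $W^{1,\infty}$ bound --- shows that $\{u^{\mu}\}$ is Cauchy in $C([0,T];L^2(\mathbb{R}^2))$; interpolating with the uniform $H^{s}$ bound gives convergence in $C([0,T];H^{s'})$ for every $s'<s$, and the limit $u$ lies in $L^{\infty}([0,T];H^{s})$ and solves \eqref{BOZK}. Uniqueness follows from the same $L^2$ difference estimate and Gronwall's inequality. The remaining and most delicate point is to upgrade $u$ from weak-$\ast$ boundedness in $L^{\infty}_TH^{s}$ to strong continuity $u\in C([0,T];H^{s})$, and to prove continuity of the flow map $u_0\mapsto u$; I would handle both by the Bona--Smith argument, mollifying the datum as $u_0^{\delta}=\rho_{\delta}\ast u_0$, exploiting the uniform estimates and uniqueness to compare the resulting solutions, and letting $\delta\to 0$. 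This recovery of strong $H^{s}$-continuity and continuous dependence is precisely the main obstacle, since the energy method alone only delivers the weak bound; by contrast, the skew-adjointness of the dispersion renders the energy computations themselves entirely routine.
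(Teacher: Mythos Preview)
Your proposal is correct and follows exactly the approach the paper indicates: the paper itself omits the proof, stating only that it is obtained by parabolic regularization (adding $-\mu\Delta u$ and letting $\mu\to 0$) following \cite{CUNHAPAS,Iorio}, which is precisely the scheme you have outlined. The energy estimate via Lemma~\ref{conmKP} and the embedding $H^s(\mathbb{R}^2)\hookrightarrow W^{1,\infty}(\mathbb{R}^2)$ for $s>2$, together with the Bona--Smith argument for strong continuity and continuous dependence, are the standard ingredients and your sketch covers them accurately.
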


For simplicity, we shall take $s=3$ in the above lemma. Therefore, the proof of Lemma \ref{comwellp} also provides existence of smooth solutions. More specifically, given $u_0\in H^{\infty}(\mathbb{R}^2)=\bigcap_{m\geq 0}H^m(\mathbb{R}^2)$ there exist $T(\left\|u\right\|_{H^3})>0$, and a unique solution $u$ of \eqref{BOZK} in the class $C([0,T];H^{\infty}(\mathbb{R}^2))$. Additionally,  Lemma \ref{comwellp} yields the following conclusion.

{\bf Blow-up criteria}. Let $u_0\in H^{\infty}(\mathbb{R}^2)$ there exist $T^{\ast}>T(\left\|u_0\right\|_{H^3})>0$ and a unique maximal solution $u$ of \eqref{BOZK} in $C([0,T^\ast);H^{\infty}(\mathbb{R}^d))$. Moreover, if the maximal time of existence $T^{\ast}$ is finite
\begin{equation}\label{blowupal}
    \lim_{t \to T^{\ast}} \left\|u(t)\right\|_{H^3}=\infty.
\end{equation}

Next, we deduce some estimates involving smooth solutions of \eqref{BOZK}. By recurrent arguments using Lemma \ref{conmKP} to control the nonlinear term in \eqref{BOZK}, it follows:

\begin{lemma}\label{apriEST}
Let $T>0$ and $u\in C([0,T];H^{\infty}(\mathbb{R}^d))$ solution of the IVP associated to \eqref{BOZK}. Then, there exists a positive constant $c_0$ such that
\begin{equation}\label{BasicEnerE} 
    \left\|u\right\|_{L^{\infty}_T H^s}^2 \leq \left\|u_0\right\|_{H^s}^2+c_0 \left\|\nabla u\right\|_{L^1_T L^{\infty}_x}\left\|u\right\|_{L^{\infty}_T H^s}^2
\end{equation}
for any $s>0$.
\end{lemma}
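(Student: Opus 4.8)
The plan is to carry out a standard $H^s$ energy estimate, exploiting that the linear dispersive part of \eqref{BOZK} is skew-adjoint and therefore contributes nothing. First I would apply $J^s$ to the equation, pair the result with $J^s u$ in $L^2(\mathbb{R}^2)$, and integrate by parts. The two dispersive contributions arise from the operators $D_x^{\alpha+1}\partial_x$ and $\partial_x\partial_y^2$, whose Fourier symbols $i\xi|\xi|^{\alpha+1}$ and $-i\xi\eta^2$ are purely imaginary and odd; hence both are skew-adjoint on $L^2$ and commute with $J^s$. For the real-valued solution $u$ this gives
\[
\int_{\mathbb{R}^2}\big(J^s D_x^{\alpha+1}u_x\big)J^s u\,dxdy=\int_{\mathbb{R}^2}\big(J^s u_{xyy}\big)J^s u\,dxdy=0,
\]
so the entire dispersive part drops out and the energy identity reduces to
\[
\frac{1}{2}\frac{d}{dt}\|J^s u(t)\|_{L^2_{xy}}^2=-\int_{\mathbb{R}^2}J^s(uu_x)\,J^s u\,dxdy.
\]

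Next I would treat the nonlinear term by the splitting $J^s(uu_x)=[J^s,u]u_x+u\,J^s u_x$. For the second piece I integrate by parts in $x$, using $u\,(J^s u_x)(J^s u)=\tfrac12 u\,\partial_x\big((J^s u)^2\big)$, to obtain $-\tfrac12\int u_x (J^s u)^2\,dxdy$, which is bounded by $\|u_x\|_{L^\infty_{xy}}\|J^s u\|_{L^2_{xy}}^2$. For the commutator piece I invoke the Kato--Ponce inequality (Lemma \ref{conmKP}) with $f=u$ and $g=u_x$, giving
\[
\|[J^s,u]u_x\|_{L^2_{xy}}\lesssim \|\nabla u\|_{L^\infty_{xy}}\|J^{s-1}u_x\|_{L^2_{xy}}+\|J^s u\|_{L^2_{xy}}\|u_x\|_{L^\infty_{xy}},
\]
and then absorb $\|J^{s-1}u_x\|_{L^2_{xy}}\lesssim\|J^s u\|_{L^2_{xy}}$, since the multiplier $|\xi|(1+|\xi|^2+|\eta|^2)^{(s-1)/2}$ is dominated by $(1+|\xi|^2+|\eta|^2)^{s/2}$. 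Pairing with $J^s u$ through Cauchy--Schwarz, both contributions are controlled by $\|\nabla u\|_{L^\infty_{xy}}\|J^s u\|_{L^2_{xy}}^2$, whence
\[
\frac{d}{dt}\|J^s u(t)\|_{L^2_{xy}}^2\leq c_0\,\|\nabla u(t)\|_{L^\infty_{xy}}\|J^s u(t)\|_{L^2_{xy}}^2.
\]

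Finally, integrating this differential inequality over $[0,t]$ and taking the supremum over $t\in[0,T]$ gives exactly \eqref{BasicEnerE}. All the manipulations are justified because $u\in C([0,T];H^\infty(\mathbb{R}^2))$ carries more than enough regularity to differentiate under the integral sign and integrate by parts. I do not anticipate a genuine obstacle here; the only points demanding care are verifying the skew-adjointness of the two dispersive operators (so the linear terms vanish at no cost) and checking that the Kato--Ponce output collapses to the single factor $\|\nabla u\|_{L^\infty_{xy}}$ — which is precisely what allows the estimate to close with $\|\nabla u\|_{L^1_T L^\infty_x}$ rather than a stronger norm.
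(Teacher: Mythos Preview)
Your proposal is correct and follows precisely the approach the paper indicates: it states only that the lemma follows ``by recurrent arguments using Lemma~\ref{conmKP} to control the nonlinear term,'' which is exactly the $J^s$ energy estimate with skew-adjointness of the dispersion and Kato--Ponce on the commutator $[J^s,u]u_x$ that you carried out.
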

In addition, we require further \emph{a priori} estimates for the  $L^1([0,T];W^{1,\infty}(\mathbb{R}^2))$-norm of smooth solutions of the IVP \eqref{BOZK}.
\begin{lemma}\label{apriEstS} Consider $0\leq \alpha < 1$. Let $u\in C([0,T];H^{\infty}(\mathbb{R}^d))$ be a solution of the IVP \eqref{BOZK}. Then, for any $s>s_{\alpha}$ there exist $k_{\delta}\in(\frac{1}{2},1)$ and $c_s>0$ such that
\begin{equation*}
f(T):= \left\|u\right\|_{L^{1}_TL^{\infty}_{xy}} +\left\|\nabla u \right\|_{L^1_TL^{\infty}_{xy}}.
\end{equation*}
satisfies
\begin{equation}\label{STREeq6}
    f(T)\leq c_sT^{k_{\delta}}(1+f(T))\left\|u\right\|_{L^{\infty}_T H^s}.
\end{equation}
\end{lemma}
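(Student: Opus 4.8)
The plan is to regard the solution $u$ of \eqref{BOZK} as a solution of the linear inhomogeneous equation $\partial_t w-D_x^{\alpha+1}w_x+w_{xyy}=F$ with forcing $F=-uu_x=-\tfrac12\partial_x(u^2)$, and then to bound the three pieces of $f(T)$ by the Strichartz-type estimates already at our disposal: $\|u\|_{L^1_TL^\infty_{xy}}$ via Lemma~\ref{refinStri}, $\|\partial_x u\|_{L^1_TL^\infty_{xy}}$ via \eqref{STREeq3}, and $\|\partial_y u\|_{L^1_TL^\infty_{xy}}$ via \eqref{STREeq3.2} (so that $\|\nabla u\|_{L^1_TL^\infty_{xy}}$ is controlled by the last two). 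Fixing $s>s_\alpha$, I would choose an auxiliary index $\sigma\in(s_\alpha,s)$ together with $\delta>0$ small enough that $\sigma+2\delta\le s$ and $0<\delta<1$; this is possible precisely because $s>s_\alpha$ strictly. Observe that $\sigma>s_\alpha>1$, so every derivative order occurring below stays nonnegative.

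For the homogeneous (linear) contributions, each of the three estimates produces a term $\sup_{t}\|J^{a}_x J^{b}u(t)\|_{L^2}$ with $a,b\ge 0$ and $a+b=\sigma+2\delta$ (namely $(a,b)=(\sigma+\delta,\delta)$ in Lemma~\ref{refinStri} and \eqref{STREeq3}, and $(a,b)=(\sigma-1+\delta,1+\delta)$ in \eqref{STREeq3.2}). Since $(1+\xi^2)^{a/2}(1+\xi^2+\eta^2)^{b/2}\le(1+\xi^2+\eta^2)^{(a+b)/2}$, Plancherel gives $\|J^a_x J^b u\|_{L^2}\le\|J^{\sigma+2\delta}u\|_{L^2}=\|u\|_{H^{\sigma+2\delta}}\le\|u\|_{L^\infty_T H^s}$, which supplies the ``$1$'' in the factor $(1+f(T))$.

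The delicate step I expect to be the main obstacle is the forcing term, which in each estimate appears as a time integral of $\|J^{a'}_x J^{b'}(uu_x)\|_{L^2}$ with $(a',b')=(\sigma-1+\delta,\delta)$ (first two) or $(a',b')=(\sigma-2+\delta,1+\delta)$ (last). I would write $uu_x=\tfrac12\partial_x(u^2)$ and absorb the extra $\partial_x$ into the $x$-multiplier, using $|\xi|\le(1+\xi^2)^{1/2}$ so that $J^{a'}_x\partial_x$ is bounded by $J^{a'+1}_x$; this keeps the $x$-order nonnegative and reduces everything to $\|J^{a'+1}_x J^{b'}(u^2)\|_{L^2}\le\|J^{\sigma+2\delta}(u^2)\|_{L^2}$, since $a'+1+b'=\sigma+2\delta$ in all three cases. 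This absorbing trick is exactly what avoids the loss that the negative-order $x$-multiplier $J^{\sigma-2+\delta}_x$ would cause in the low-regularity range $\sigma<2$. The isotropic fractional Leibniz rule \eqref{eqfraLR} (with $(p_1,q_1)=(2,\infty)$ and $(p_2,q_2)=(\infty,2)$) then yields $\|J^{\sigma+2\delta}(u^2)\|_{L^2}\lesssim\|u\|_{L^\infty_{xy}}\|J^{\sigma+2\delta}u\|_{L^2}\le\|u\|_{L^\infty_{xy}}\|u\|_{L^\infty_T H^s}$, so that integrating in time gives $\int_0^T\|u\|_{L^\infty_{xy}}\|u\|_{L^\infty_T H^s}\,d\tau\le\|u\|_{L^1_TL^\infty_{xy}}\|u\|_{L^\infty_T H^s}\le f(T)\|u\|_{L^\infty_T H^s}$.

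Collecting the homogeneous and forcing contributions in each of Lemma~\ref{refinStri}, \eqref{STREeq3} and \eqref{STREeq3.2}, every piece of $f(T)$ is bounded by $c_s T^{k_\delta}(1+f(T))\|u\|_{L^\infty_T H^s}$ for some $k_\delta\in(\tfrac12,1)$, and summing the three yields \eqref{STREeq6}. The only genuine bookkeeping point is the regularity budget: one must check simultaneously that $\sigma$ exceeds the threshold required by each Strichartz estimate ($\sigma>s_\alpha-1$ for Lemma~\ref{refinStri}, $\sigma>s_\alpha$ for \eqref{STREeq3}--\eqref{STREeq3.2}) and that $\sigma+2\delta\le s$, which is exactly the slack granted by the hypothesis $s>s_\alpha$.
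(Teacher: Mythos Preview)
Your proposal is correct and follows essentially the same route as the paper: fix an auxiliary index $\widetilde{s}\in(s_\alpha,s)$ and $0<\delta<\min\{1,(s-\widetilde{s})/2\}$, apply Lemma~\ref{refinStri}, \eqref{STREeq3} and \eqref{STREeq3.2} with $F=-uu_x$, and close with the fractional Leibniz rule. The only cosmetic difference is that for the $\partial_x u$ piece the paper applies Leibniz directly to the product $u\cdot u_x$ (obtaining both $\|u\|_{L^\infty}$ and $\|\partial_xu\|_{L^\infty}$ on the right), whereas you uniformly rewrite $uu_x=\tfrac12\partial_x(u^2)$ and absorb the derivative before Leibniz; both variants land inside $f(T)$ and the argument is otherwise identical.
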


\begin{proof}
Let $\widetilde{s} \in (s_{\alpha},s)$ fixed and $0<\delta <\min\left\{(s-\widetilde{s})/2,1\right\}$. From \eqref{STREeq3} with $F=-u\partial_{x}u$, we get
\begin{equation}\label{liest8}
\begin{aligned}
     \|\partial_x u &\|_{L^1_T L^{\infty}_{xy}}\\ 
     &\lesssim T^{\kappa_{\delta}}\big(\sup_{[0,T]} \|J^{\widetilde{s} +2\delta}u(t)\|_{L^2_{xy}}+\int_0^T \|J^{\widetilde{s}-1+2\delta}(u\partial_{x}u)(\tau)\|_{L^2_{xy}} \, d\tau\big),
\end{aligned}
\end{equation}
for some $k_{\delta}\in (1/2,1)$. Our choice of $\delta$ then shows
\begin{equation}\label{liest3}
\sup_{t\in [0,T]}\|J^{\widetilde{s} +2\delta}u(t)\|_{L^2_{xy}} \leq  \|u\|_{L^{\infty}_T H^s},
\end{equation}
and  Lemma \ref{fraLR} gives
\begin{equation}\label{liest4}
    \begin{aligned}
    \|J^{\widetilde{s}-1+2\delta}(u\partial_{x}u) &\|_{L^2_{xy}} \\
    &\lesssim \| u\|_{L^{\infty}}\|J^{\widetilde{s}-1+2\delta}\partial_{x}u\|_{L^{2}_{xy}}+\|J^{\widetilde{s}-1+2\delta} u\|_{L^2_{xy}}\left\|\partial_{x}u\right\|_{L^{\infty}} \\
    &\lesssim \left(\left\|u\right\|_{L^{\infty}}+\left\|\nabla u\right\|_{L^{\infty}}\right)\left\|u\right\|_{L^{\infty}_T H^s}.
    \end{aligned}
\end{equation}
Plugging the above estimates in \eqref{liest8}, we arrive at
\begin{equation}\label{liest11}
   \left\|\partial_x u \right\|_{L^1_T L^{\infty}_{xy}} \lesssim_s T^{\kappa_{\delta}} (1+f(T))\left\|u\right\|_{L^{\infty}_T H^s_x}.
\end{equation} 
On the other hand, \eqref{STREeq3.2} yields
\begin{equation}\label{liest6}
\begin{aligned}
     \|\partial_y u &\|_{L^1_T L^{\infty}_{xy}}\\
      &\lesssim T^{\kappa_{\delta}}\big(\sup_{[0,T]} \|J^{\widetilde{s} +2\delta}u(t)\|_{L^2_{xy}}+\int_0^T \|J_x^{\widetilde{s}-2+\delta}J^{1+\delta}(u\partial_{x}u)(\tau)\|_{L^2_{xy}} \, d\tau\big) \\
     & \lesssim T^{\kappa_{\delta}}\big(\sup_{[0,T]} \|J^{\widetilde{s} +2\delta}u(t)\|_{L^2_{xy}}+\int_0^T \|J^{\widetilde{s}+2\delta}(u^2)(\tau)\|_{L^2_{xy}} \, d\tau\big).
\end{aligned}
\end{equation}
And so the above inequality allows us to argue as in \eqref{liest3} and \eqref{liest4} to obtain 
\begin{equation*}
   \left\|\partial_y u \right\|_{L^1_T L^{\infty}_{xy}} \lesssim_s T^{\kappa_{\delta}} (1+f(T))\left\|u\right\|_{L^{\infty}_T H^s}.
\end{equation*} 
Now, since $\widetilde{s}-1>0$, one can use \eqref{STREeq3.1} and the arguments in \eqref{liest6} to obtain the desired estimate for  $\left\|u\right\|_{L^1_TL^{\infty}_{xy}}$.
\end{proof}

\subsection{A priori Estimates}

We require some additional \emph{a priori} estimates.
\begin{lemma}\label{apriESTlower}
Let $0\leq \alpha < 1$, $s\in (s_{\alpha},3]$. Then there exists $A_s>0$, such that for all $u_0\in H^{\infty}(\mathbb{R}^2)$, there is a solution $u\in C([0,T^{\ast});H^{\infty}(\mathbb{R}^2))$ of \eqref{BOZK}  where $T^{\ast}=T^{\ast}(\left\|u_0\right\|_{H^{3}})>(1+A_s \left\|u_0\right\|_{H^s})^{-2}$. Moreover, there exist a constant $K_0>0$ such that
\begin{equation*}
    \left\|u\right\|_{L^{\infty}_T H^s} \leq 2 \left\|u_0\right\|_{H^s}, 
\end{equation*}
and 
\begin{equation*}
    f(T)=\left\|u\right\|_{L^1_T L^{\infty}_{xy}}+\left\|\nabla u\right\|_{L^1_T L^{\infty}_{xy}} \leq K_0,
\end{equation*}
whenever $T\leq(1+A_s\left\|u_0\right\|_{H^s})^{-2}$.
\end{lemma}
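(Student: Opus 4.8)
The plan is to run a continuity (bootstrap) argument combining the two a priori estimates in Lemmas \ref{apriEST} and \ref{apriEstS} with the blow-up criterion \eqref{blowupal}. Throughout I would work with the maximal smooth solution $u\in C([0,T^{\ast});H^{\infty}(\mathbb{R}^2))$, $T^{\ast}=T^{\ast}(\|u_0\|_{H^3})$, provided by Lemma \ref{comwellp}, and I may assume $u_0\not\equiv 0$, the other case being trivial. Fix the given $s\in(s_{\alpha},3]$ and set $g(T)=\|u\|_{L^{\infty}_T H^s}$ and $f(T)=\|u\|_{L^1_T L^{\infty}_{xy}}+\|\nabla u\|_{L^1_T L^{\infty}_{xy}}$; both are continuous and nondecreasing on $[0,T^{\ast})$, with $g(0)=\|u_0\|_{H^s}$ and $f(0)=0$. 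Let $c_0$ be the constant from Lemma \ref{apriEST}, put $K_0:=1/(2c_0)$, and define $T_A:=(1+A_s\|u_0\|_{H^s})^{-2}\le 1$ for a large constant $A_s$ to be fixed.

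First I would bootstrap the bounds. Let $\widetilde{T}$ be the supremum of all $T\in[0,\min(T_A,T^{\ast}))$ for which $g(\tau)\le 2\|u_0\|_{H^s}$ and $f(\tau)\le K_0$ hold for every $\tau\le T$; by continuity and the values at $\tau=0$ we have $\widetilde{T}>0$. On $[0,\widetilde{T})$ the energy estimate \eqref{BasicEnerE} gives $g(T)^2\le\|u_0\|_{H^s}^2+c_0 f(T)\,g(T)^2\le\|u_0\|_{H^s}^2+\tfrac12 g(T)^2$, hence $g(T)\le\sqrt{2}\,\|u_0\|_{H^s}<2\|u_0\|_{H^s}$, strictly better than the threshold. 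For $f$, Lemma \ref{apriEstS} furnishes $k_{\delta}\in(\tfrac12,1)$ with $f(T)\le c_s T^{k_{\delta}}(1+f(T))g(T)\le C_s T^{k_{\delta}}\|u_0\|_{H^s}$ on $[0,\widetilde{T})$, where $C_s:=2c_s(1+K_0)$. Since $k_{\delta}>1/2$ and $T<T_A\le 1$, we have $T^{k_{\delta}}\le T_A^{1/2}=(1+A_s\|u_0\|_{H^s})^{-1}$, so $f(T)\le C_s/A_s$; choosing $A_s:=\max\{4c_0 C_s,1\}$ forces $f(T)\le K_0/2<K_0$. These strict improvements, together with continuity, contradict the maximality of $\widetilde{T}$ unless $\widetilde{T}=\min(T_A,T^{\ast})$. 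Thus $g(T)\le\sqrt{2}\,\|u_0\|_{H^s}\le 2\|u_0\|_{H^s}$ and $f(T)\le K_0$ on all of $[0,\min(T_A,T^{\ast}))$.

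Next I would rule out $T^{\ast}\le T_A$. If it held, then $\min(T_A,T^{\ast})=T^{\ast}$ and the previous step gives $\|\nabla u\|_{L^1_T L^{\infty}_{xy}}\le f(T)\le K_0=1/(2c_0)$ for every $T<T^{\ast}$. Applying \eqref{BasicEnerE} now with the exponent $3$ (admissible, since \eqref{BasicEnerE} holds for every positive exponent) yields $\|u\|_{L^{\infty}_T H^3}^2\le\|u_0\|_{H^3}^2+c_0\|\nabla u\|_{L^1_T L^{\infty}_{xy}}\|u\|_{L^{\infty}_T H^3}^2\le\|u_0\|_{H^3}^2+\tfrac12\|u\|_{L^{\infty}_T H^3}^2$, whence $\|u(t)\|_{H^3}\le\sqrt{2}\,\|u_0\|_{H^3}$ uniformly for $t<T^{\ast}$. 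This contradicts the blow-up criterion \eqref{blowupal}, so $T^{\ast}>T_A=(1+A_s\|u_0\|_{H^s})^{-2}$. In particular the solution exists on $[0,T_A]$ and satisfies the asserted bounds there, which completes the proof.

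The hard part is the mismatch between the regularity at which $f(T)$ is controlled and the regularity governing the lifespan: $f(T)$ involves only $L^{\infty}$-type norms, which Lemma \ref{apriEstS} ties to $\|u\|_{L^{\infty}_T H^s}$ at the low exponent $s>s_{\alpha}$, whereas the blow-up criterion is stated in $H^3$. The key point that makes the argument close is that once $f(T)$ is shown to be small using only the $H^s$-theory, it can be reinserted into the energy estimate \eqref{BasicEnerE} at the $H^3$-level to bound $\|u\|_{H^3}$ and preclude blow-up. The second delicate point is the bookkeeping that lets the time factor absorb the data size: because $k_{\delta}>1/2$, choosing the lifespan of the precise form $(1+A_s\|u_0\|_{H^s})^{-2}$ makes $T^{k_{\delta}}\|u_0\|_{H^s}\le C_s/A_s$ uniformly small, which is exactly what the bootstrap on $f$ requires.
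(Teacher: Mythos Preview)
Your proof is correct and follows precisely the approach the paper has in mind: the paper's own proof merely cites Lemmas \ref{apriEST}, \ref{apriEstS}, the blow-up criterion \eqref{blowupal}, and refers to \cite[Lemma 5.3]{FKP} for the details, which is exactly the bootstrap/continuity argument you have written out. Your choice $K_0=1/(2c_0)$, the use of $k_{\delta}>1/2$ to make $T^{k_{\delta}}\le T_A^{1/2}$, and the application of \eqref{BasicEnerE} at the $H^3$-level to preclude blow-up are the standard ingredients of that argument.
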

\begin{proof}
In view of the Lemmas \ref{apriEST}, \ref{apriEstS} and the blow-up criteria \eqref{blowupal} applied to the $H^{3}$-norm, the proof follows from the same arguments in \cite[Lemma 5.3]{FKP}.
\end{proof}

\subsection{Proof of Theorem \ref{imprwellpos}}

According to Lemma \ref{comwellp}, we shall assume that $(17-2\alpha)/12=s_{\alpha}<s\leq 3$. Let us consider  $u_0\in H^{s}(\mathbb{R}^2)$ fixed. The existence part is deduced employing the Bona-Smith argument \cite{BS}. More specifically,  we regularize the initial data by choosing $\rho\in C_c^{\infty}(\mathbb{R}^{2})$ radial with $0\leq \rho\leq 1$, $\rho(\xi,\eta)=1$ for $|(\xi,\eta)|\leq 1$ and $\rho(\xi,\eta)=0$ for $|(\xi,\eta)|>2$, we set then
$$u_{0,n}:=\left\{\rho(\xi/n,\eta/n)\widehat{u}_0(\xi,\eta)\right\}^{\vee}$$
for any integer $n\geq 1$. Now, by employing Plancherel's identity and Lebesgue dominated convergence theorem, it is not difficult to see that for $m\geq n\geq 1$,
\begin{equation}\label{estimapprox}
n^{\sigma}\|J^{s-\sigma}(u_{0,n}-u_{0,m})\|_{L^2_{xy}}  \underset{n\to \infty}{\rightarrow} 0,
\end{equation}
whenever $0\leq \sigma \leq s$.

Consequently, since $\left\|u_{0,n}\right\|_{H^{s}}\leq \|u_{0}\|_{H^{s}}$, Lemma \ref{apriESTlower} establishes the existence of regularized solutions $u_n \in C([0,T];H^{\infty}(\mathbb{R}^2))$ emanating from $u_{0,n}$ $n\geq 1$, sharing the same existence time
\begin{equation}\label{eqexistime}
0<T\leq (1+A_s \left\|u_0\right\|_{H^s})^{-2},
\end{equation}
satisfying
\begin{equation}\label{eqexist3}
    \left\|u_n\right\|_{L^{\infty}_T H^{s}} \leq 2 \left\|u_0\right\|_{H^{s}}, 
\end{equation}
and
\begin{equation}\label{eqexist4}
   K= \sup_{n\geq 1}\left\{\left\|u_n\right\|_{L^{1}_TL^{\infty}_x} +\left\|\nabla u_n\right\|_{L^{1}_TL^{\infty}_x} \right\} <\infty.
\end{equation}
Therefore, setting $v_{n,m}=u_n-u_m$, we find
\begin{equation}\label{eqexiscauchy}
    \partial_t v_{n,m} -D_x^{\alpha+1}\partial_x v_{n,m}+\partial_x\partial_y^2 v_{n,m}+\frac{1}{2}\partial_x((u_n+u_m)v_{n,m})=0,
\end{equation}
with initial condition $v_{n,m}(0)=u_{0,n}-u_{0,m}$. Thus, by employing recurrent energy estimates, \eqref{eqexist4} and \eqref{estimapprox} we find 
\begin{equation}\label{eqdecayn}
    n^{s-\sigma}\, \left\|J^{\sigma}(u_n-u_{m})\right\|_{L^{\infty}_T L^2_{xy}} \lesssim n^{s-\sigma}e^{cK}\left\|J^{\sigma}(u_{0,n}-u_{0,m})\right\|_{L^{\infty}_T L^2_{xy}} \underset{n\to \infty}{\rightarrow}0,
\end{equation}
$m\geq n\geq 1$, whenever $0\leq\sigma<s$. 

\begin{prop}\label{exisprop1} 
Let $m\geq n \geq 1$, then
\begin{equation}\label{eqexist5}
 n\left\|u_n-u_m\right\|_{L^1_TL^{\infty}_{xy}}+\left\|\nabla(u_n-u_m)\right\|_{L^1_TL^{\infty}_{xy}} \underset{n\to \infty}{\rightarrow} 0,
\end{equation}
provided that $A_s$ in \eqref{eqexistime} is taken large enough. Moreover,
\begin{equation}\label{eqexist5.1}
    \left\|u_n-u_m\right\|_{L^{\infty}_T H^s} \underset{n\to \infty}{\rightarrow} 0.
\end{equation}
\end{prop}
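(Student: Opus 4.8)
The plan is to establish the three limits in Proposition \ref{exisprop1} in order, viewing $v_{n,m}=u_n-u_m$ as a solution of the forced linear equation \eqref{eqexiscauchy}, i.e. $\partial_t v_{n,m}-D_x^{\alpha+1}\partial_x v_{n,m}+\partial_x\partial_y^2 v_{n,m}=F$ with $F=-\tfrac12\partial_x\big((u_n+u_m)v_{n,m}\big)$. The bound $n\|v_{n,m}\|_{L^1_TL^\infty_{xy}}\to0$ and the gradient bound in \eqref{eqexist5} will come from the Strichartz estimates of Lemma \ref{refinStri} and its corollaries \eqref{STREeq3}--\eqref{STREeq3.2}, while \eqref{eqexist5.1} will come from a top order energy estimate. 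The basic inputs throughout are the uniform bounds \eqref{eqexist3} and \eqref{eqexist4}, the low regularity Cauchy decay \eqref{eqdecayn}, and the data approximation \eqref{estimapprox}. First I would apply \eqref{STREeq3.1} to $v_{n,m}$ with an index $\widetilde{s}\in(s_{\alpha}-1,s-1)$ and $\delta>0$ small enough that $\widetilde{s}+2\delta<s-1$; bounding the forcing through $\|J_x^{\widetilde{s}-1+\delta}J^\delta F\|_{L^2_{xy}}\lesssim\|J^{\widetilde{s}+2\delta}((u_n+u_m)v_{n,m})\|_{L^2_{xy}}$ and the fractional Leibniz rule \eqref{eqfraLR}, one arrives after multiplying by $n$ at
\[
n\|v_{n,m}\|_{L^1_TL^\infty_{xy}}\lesssim T^{k_\delta}\Big(n\|J^{\widetilde{s}+2\delta}v_{n,m}\|_{L^\infty_TL^2_{xy}}+K\,n\|J^{\widetilde{s}+2\delta}v_{n,m}\|_{L^\infty_TL^2_{xy}}+\|u_0\|_{H^s}\,n\|v_{n,m}\|_{L^1_TL^\infty_{xy}}\Big),
\]
where $\|J^{\widetilde{s}+2\delta}(u_n+u_m)\|_{L^2_{xy}}\lesssim\|u_0\|_{H^s}$ by \eqref{eqexist3}. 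The first two terms tend to $0$ because $\widetilde{s}+2\delta<s-1$ makes the decay in \eqref{eqdecayn} beat the factor $n$, and the last, self-referential term is absorbed by choosing $A_s$ in \eqref{eqexistime} large, which forces $cT^{k_\delta}\|u_0\|_{H^s}\le\tfrac12$ (since $T\le(1+A_s\|u_0\|_{H^s})^{-2}$ and $2k_\delta>1$).

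Once $\|v_{n,m}\|_{L^1_TL^\infty_{xy}}\to0$ is known, the gradient bound follows in the same spirit. Applying \eqref{STREeq3} and \eqref{STREeq3.2} to $v_{n,m}$ with an index $s'\in(s_\alpha,s)$ and $\delta$ small with $s'+2\delta<s$, and again reducing the forcing by \eqref{eqfraLR}, every resulting term is controlled either by $\|J^{s'+2\delta}v_{n,m}\|_{L^\infty_TL^2_{xy}}\to0$ through \eqref{eqdecayn}, or by $\|u_0\|_{H^s}\|v_{n,m}\|_{L^1_TL^\infty_{xy}}+K\|J^{s'+2\delta}v_{n,m}\|_{L^\infty_TL^2_{xy}}\to0$. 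Hence $\|\nabla v_{n,m}\|_{L^1_TL^\infty_{xy}}\to0$, completing \eqref{eqexist5}.

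The hard part will be \eqref{eqexist5.1}. I would apply $J^s$ to \eqref{eqexiscauchy}, pair with $J^s v_{n,m}$ in $L^2_{xy}$, and observe that the dispersive contributions vanish because $-D_x^{\alpha+1}\partial_x$ and $\partial_x\partial_y^2$ are skew adjoint, leaving
\[
\tfrac12\tfrac{d}{dt}\|J^s v_{n,m}\|_{L^2_{xy}}^2=-\tfrac12\int J^s\partial_x\big((u_n+u_m)v_{n,m}\big)\,J^s v_{n,m}\,dxdy.
\]
Expanding $\partial_x((u_n+u_m)v_{n,m})$ and using the Kato--Ponce commutator estimate \eqref{conmKPI} together with \eqref{eqfraLR}, all contributions are of Gronwall type with $L^1_T$ coefficients drawn from \eqref{eqexist4}, except the term in which the top derivative falls on the coefficient, namely $\|v_{n,m}\|_{L^\infty_{xy}}\|J^{s+1}(u_n+u_m)\|_{L^2_{xy}}$.

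The main obstacle is precisely this last term: the crude bound $\|J^{s+1}u_m\|_{L^2_{xy}}\lesssim m\|u_0\|_{H^s}$ is fatal when $m\gg n$, since only $n\|v_{n,m}\|_{L^1_TL^\infty_{xy}}\to0$ is available. The key device is to write $u_m=u_n-v_{n,m}$, so that $u_n+u_m=2u_n-v_{n,m}$; the dangerous factor then becomes $\|v_{n,m}\|_{L^\infty_{xy}}\|J^{s+1}u_n\|_{L^2_{xy}}$ plus a term cubic in $v_{n,m}$. Using the a priori estimate of Lemma \ref{apriEST} at regularity $s+1$ together with \eqref{eqexist4} (subdividing $[0,T]$ into finitely many subintervals on which the nonlinear coefficient can be absorbed) yields $\|u_n\|_{L^\infty_TH^{s+1}}\lesssim n\|u_0\|_{H^s}$ with constant independent of $n$; hence this term integrates to $\lesssim n\|v_{n,m}\|_{L^1_TL^\infty_{xy}}\|u_0\|_{H^s}\to0$ by the first part, and the cubic term is bounded by the admissible Gronwall factor $\|\nabla v_{n,m}\|_{L^\infty_{xy}}\|J^s v_{n,m}\|_{L^2_{xy}}^2$. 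Collecting everything, Gronwall's inequality gives
\[
\sup_{[0,T]}\|J^s v_{n,m}\|_{L^2_{xy}}^2\lesssim e^{cK}\Big(\|J^s(u_{0,n}-u_{0,m})\|_{L^2_{xy}}^2+o(1)\Big),
\]
and since $\|J^s(u_{0,n}-u_{0,m})\|_{L^2_{xy}}\to0$ by \eqref{estimapprox}, we conclude \eqref{eqexist5.1}.
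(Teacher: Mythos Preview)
Your proof is correct and follows essentially the same route as the paper. Both arguments obtain \eqref{eqexist5} via Lemma \ref{refinStri}/\eqref{STREeq3}--\eqref{STREeq3.2} applied to \eqref{eqexiscauchy} with absorption of the self-referential term by smallness of $T$, and both handle \eqref{eqexist5.1} by a top-order energy estimate whose only dangerous term is $\|v_{n,m}\|_{L^\infty_{xy}}\|J^{s+1}u_n\|_{L^2_{xy}}$, controlled through $\|u_n\|_{L^\infty_TH^{s+1}}\lesssim n$ and the first part; the only cosmetic difference is that you split the nonlinearity via $u_n+u_m=2u_n-v_{n,m}$, while the paper writes $\tfrac12\partial_x((u_n+u_m)v_{n,m})=v_{n,m}\partial_x u_n+u_m\partial_x v_{n,m}$, which achieves the same placement of the top derivative on $u_n$.
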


\begin{proof}
We begin deducing the first estimate on the l.h.s of \eqref{eqexist5}. Let $\widetilde{s}\in(s_{\alpha},s)$ and $0<\delta < \min\{1,(s-\widetilde{s})/2\}$ fixed. An application of Lemma \ref{refinStri} with equation \eqref{eqexiscauchy}  yields 
\begin{equation}\label{eqexist6}
\begin{aligned}
\|&v_{n,n}\|_{L^1_T L^{\infty}_{xy}} \\
&\lesssim T^{1/2}\big(\|J^{\widetilde{s}-1+\delta}v_{n,m}\|_{L^{\infty}_TL^2_{xy}}+\int_0^T \|J^{\widetilde{s}-1+2\delta}((u_n+u_m)v_{n,m})(\tau)\|_{L^2_{xy}}\, d\tau \big) \\
& \lesssim T^{1/2}\big(\|J^{\widetilde{s}-1+\delta}v_{n,m}\|_{L^{\infty}_TL^2_{xy}}\\
&\hspace{2cm}+\int_0^T \big(\|J^{\widetilde{s}-1+2\delta}(u_n+u_m)(\tau)\|_{L^2_{xy}}\|v_{n,m}(\tau)\|_{L^{\infty}_{xy}} \\
&\hspace{3cm}+ \|J^{\widetilde{s}-1+2\delta}v_{n,m}(\tau)\|_{L^2_{xy}}\|(u_n+u_m)(\tau)\|_{L^{\infty}_{xy}}\big)\, d\tau 
 \big),
\end{aligned}
\end{equation}
where we have employed Lemma \ref{fraLR}. Notice that our choice of $\delta$ and \eqref{eqdecayn} give
\begin{equation*}
\|J^{\widetilde{s}-1+\delta}v_{n,m}\|_{L^{\infty}_TL^2_{xy}}=o(n^{-1}),
\end{equation*}
so that \eqref{eqexist6}, \eqref{eqexist4} and \eqref{eqexist5} imply
\begin{equation*}
\|v_{n,n}\|_{L^1_T L^{\infty}_{xy}} =  o(n^{-1})+O(T^{1/2}\|v_{n,n}\|_{L^1_T L^{\infty}_{xy}}).
\end{equation*}
Hence, taking $0<T<1$ small with respect to the above constant (that is, $A_s$ large in \eqref{eqexistime}), we find
\begin{equation}\label{eqexist7}
\|v_{n,n}\|_{L^1_T L^{\infty}_{xy}} =  o(n^{-1}).
\end{equation}
On the other hand, by a similar reasoning dealing with \eqref{eqexist6}, employing \eqref{STREeq3} and \eqref{STREeq3.2} with equation  \eqref{eqexiscauchy}, we obtain
\begin{equation*}
\begin{aligned}
\|\nabla v_{n,m}\|_{L^1_T L^{\infty}_{xy}} &\lesssim T^{1/2}\big(\|J^{\widetilde{s}+\delta}v_{n,m}\|_{L^{\infty}_TL^2_{xy}}+\|J^{\widetilde{s}+2\delta}(u_n+u_m)\|_{L^{\infty}_TL_{xy}^2}\|v_{n,m}\|_{L^1_T L^{\infty}_{xy}} \\
&\hspace{3.5cm}+\|J^{\widetilde{s}+2\delta}v_{n,m}\|_{L^{\infty}_TL_{xy}^2}\|v_{n}+v_m\|_{L^1_T L^{\infty}_{xy}}\big)\\
&=o(1)+O(\|v_{n,m}\|_{L^1_T L^{\infty}_{xy}}),
\end{aligned}
\end{equation*}
where we have employed \eqref{eqexist3} and \eqref{eqexist4}. Then, \eqref{eqexist7} completes the deduction of \eqref{eqexist5}. 

Next, we deduce \eqref{eqexist5.1}. Applying $J^s$ to \eqref{eqexiscauchy}, multiplying then by $v_{n,m}$, integrating in space shows 
\begin{equation}\label{eqexist8}
\begin{aligned}
\frac{1}{2}\frac{d}{dt}\|J^sv_{n,m}(t)\|_{L^{2}_{xy}}^2=&-\int J^s(v_{n,m}\partial_x u_n)J^sv_{n,m}\, dxdy\\
&-\int J^s(u_{m}\partial_x v_{n,m})J^sv_{n,m}\, dx dy \\
=:& A_1+A_2.
\end{aligned}
\end{equation}
Applying the Cauchy-Schwarz inequality and the commutator estimate \eqref{conmKP},
\begin{equation*}
\begin{aligned}
|A_1|&=\big|\int [J^s,v_{n,m}]\partial_x u_n J^sv_{n,m}\, dxdy+\int v_{n,m} \partial_x J^s u_n J^sv_{n,m}\, dxdy \big| \\
&\lesssim \| [J^s,v_{n,m}]\partial_x u_n\|_{L^2_{xy}}\|J^sv_{n,m}\|_{L^2_{xy}}+\|v_{n,m}\|_{L^{\infty}_{xy}}\|\partial_x J^s u_n\|_{L^2_{xy}}\|J^s v_{n,m}\|_{L^2_{xy}} \\
& \lesssim \|\nabla v_{n,m}\|_{L^{\infty}_{xy}}\|J^s u_n\|_{L^2_{xy}}\|J^sv_{n,m}\|_{L^2_{xy}}+\|\partial_x u_n\|_{L^{\infty}_{xy}}\|J^sv_{n,m}\|_{L^2_{xy}}^2\\
&\hspace{4.5cm}+\|v_{n,m}\|_{L^{\infty}_{xy}}\|\partial_x J^s u_n\|_{L^2_{xy}}\|J^s v_{n,m}\|_{L^2_{xy}}.
\end{aligned}
\end{equation*}
Now, employing energy estimates with the equation in \eqref{BOZK} and using \eqref{eqexist3} and \eqref{eqexist4}, we get
\begin{equation*}
\|\partial_x J^s u_n\|_{L^2_{xy}} \lesssim e^{cK}\|\partial_x J^s u_{0,n}\|_{L^2_{x y}} \lesssim n.
\end{equation*}
Gathering the previous estimates, we arrive at
\begin{equation*}
\begin{aligned}
|A_1|&\lesssim \|\nabla v_{n,m}\|_{L^{\infty}_{xy}}\|J^sv_{n,m}\|_{L^2_{xy}}+\|\nabla u_n\|_{L^{\infty}_{xy}}\|J^sv_{n,m}\|_{L^2_{xy}}^2\\
&\hspace{4.5cm}+(n\|v_{n,m}\|_{L^{\infty}_{xy}})\|J^s v_{n,m}\|_{L^2_{xy}}.
\end{aligned}
\end{equation*}
On the other hand, integrating by parts
\begin{equation*}
\begin{aligned}
\begin{aligned}
|A_2|&=\big|\int [J^s,u_m]\partial_x v_{n,m} J^sv_{n,m} \, dxdy-\frac{1}{2}\int \partial_x u_m (J^sv_{n,m})^2 \, dxdy\big| \\
&\lesssim \|[J^s,u_m]\partial_x v_{n,m}\|_{L^2_{xy}}\|J^sv_{n,m}\|_{L^{2}_{xy}}+\|\partial_x u_m\|_{L^{\infty}_{xy}}\|J^sv_{n,m}\|_{L^2_{xy}}^2 \\
& \lesssim \|\nabla u_m\|_{L^{\infty}_{xy}}\|J^s v_{n,m}\|_{L^{2}_{xy}}^2+\|J^s u_m\|_{L^{2}_{xy}}\|\partial_x v_{n,m}\|_{L^{\infty}_{xy}}\|J^s v_{n,m}\|_{L^{2}_{xy}}.
\end{aligned}
\end{aligned}
\end{equation*}
Inserting the estimates for $A_1$ and $A_2$ in \eqref{eqexist8}, we get
\begin{equation*}
\begin{aligned}
\frac{1}{2}\frac{d}{dt}\|J^s v_{n,m}(t)\|_{L^2_{xy}}^2 \lesssim & (n\|v_{n,m}\|_{L^{\infty}_{xy}}+\|\nabla v_{n,m}\|_{L^{\infty}_{xy}})\|J^s v_{n,m}\|_{L^2_{xy}}\\ &+(\|\nabla u_n\|_{L^{\infty}_{xy}}+\|\nabla u_m\|_{L^{\infty}_{xy}})\|J^s v_{n,m}\|_{L^2_{xy}}^2.
\end{aligned}
\end{equation*}
Hence, applying Gronwall's inequality to the above expression, and recalling \eqref{eqexist4}, we find that there exists $c>0$ such that
\begin{equation*}
\begin{aligned}
\|J^s(u_n-u_m)\|_{L^{\infty}_T L^2_{xy}} \lesssim & \big( \|J^s(u_{0,n}-u_{0,m})\|_{L^{\infty}_T L^2_{xy}}\\
&\hspace{1cm}+(n\|v_{n,m}\|_{L^1_T L^{\infty}_{xy}}+\|\nabla v_{n,m}\|_{L^1_T L^{\infty}_{xy}})\big)e^{cK}.
\end{aligned}
\end{equation*}
Now, \eqref{eqexist5.1} is a consequence of \eqref{estimapprox} and \eqref{eqexist5}.
\end{proof}

We deduce from  Proposition \ref{exisprop1}  that $u_n$ converges to a function $u$ in 
$$C([0,T];H^s(\mathbb{R}^2)\cap L^1([0,T];W^{1,\infty}(\mathbb{R}^2)).$$ Therefore, since $u_n$ solves the integral equation
$$ u_n(t)=S(t)u_{0,n}-\frac{1}{2}\int_{0}^t S(t-\tau)\partial_{x}u_n^2(\tau)\, d\tau,$$
letting $n\to \infty$ in the sense of $C([0,T];H^{s-1}(\mathbb{R}^2))$, we conclude that $u$ also solves the integral equation associated to \eqref{BOZK}. This completes the existence part of Theorem \ref{imprwellpos}. Uniqueness is derived by using a similar energy estimate to \eqref{BasicEnerE} for the difference of two solutions, and then applying Gronwall's lemma. Finally, continuous dependence is extended by approximation with the sequence of smooth solutions $\{u_n\}$ and employing this same property from Lemma \ref{comwellp}. We refer to \cite{OscarHBO,FKP} for an explicit prove of these results. 


\section*{Acknowledgment}

We would like to thanks Prof. Svetlana Roundenko for reading a previous version of this work. Also, we would like to thank Prof. Felipe Linares for all his suggestions.  Additionally, we thank Prof. Claudio Mu\~{n}oz  for the comments in a previous version of this work as well as his suggestions to present the graphical description of the phenomena.


\bibliographystyle{abbrv}
\bibliography{bibli}

\begin{thebibliography}{10}

\bibitem{AronszBessel}
N.~Aronszajn and K.~T. Smith.
\newblock {Theory of Bessel potentials. I}.
\newblock {\em Annales de l'Institut Fourier}, 11:385--475, 1961.

\bibitem{Smoothcomm}
A.~B\'enyi and T.~Oh.
\newblock {Smoothing of Commutators for a Hörmander Class of Bilinear
  Pseudodifferential Operators}.
\newblock {\em Journal of Fourier Analysis and Applications}, 20(2):282--300,
  2014.

\bibitem{BS}
J.~L. Bona and R.~Smith.
\newblock {The Initial-Value Problem for the Korteweg-De Vries Equation}.
\newblock {\em Philosophical Transactions of the Royal Society of London.
  Series A, Mathematical and Physical Sciences}, 278(1287):555--601, 1975.

\bibitem{bourgain2014}
J.~Bourgain and D.~Li.
\newblock {On an endpoint Kato-Ponce inequality}.
\newblock {\em Differential Integral Equations}, 27(11/12):1037--1072, 2014.

\bibitem{Paisas1}
E.~Bustamante, J.~Jim\'enez, and J.~Mej\'ia.
\newblock {The Cauchy problem for a family of two-dimensional fractional
  Benjamin-Ono equations}.
\newblock {\em Communications on Pure \& Applied Analysis}, 18(3):1177--1203,
  2019.

\bibitem{CUNHAPAS}
A.~Cunha and A.~Pastor.
\newblock {The IVP for the Benjamin–Ono–Zakharov–Kuznetsov equation in
  weighted Sobolev spaces}.
\newblock {\em Journal of Mathematical Analysis and Applications}, 417(2):660
  -- 693, 2014.

\bibitem{CUNHA2016}
A.~Cunha and A.~Pastor.
\newblock {The IVP for the Benjamin–Ono–Zakharov–Kuznetsov equation in
  low regularity Sobolev spaces}.
\newblock {\em Journal of Differential Equations}, 261(3):2041 -- 2067, 2016.

\bibitem{CunhaPastor}
A.~{Cunha} and A.~{Pastor}.
\newblock {Persistence properties for the dispersion generalized BO-ZK equation
  in weighted anisotropic Sobolev spaces}.
\newblock {\em arXiv}, 2020.

\bibitem{DawsonMCPON}
L.~Dawson, H.~McGahagan, and G.~Ponce.
\newblock {On the Decay Properties of Solutions to a Class of Schr\"odinger
  Equations}.
\newblock {\em Proceedings of the American Mathematical Society},
  136(6):2081--2090, 2008.

\bibitem{GiniVelo}
J.~Ginibre and G.~Velo.
\newblock {Commutator expansions and smoothing properties of generalized
  Benjamin--Ono equations}.
\newblock {\em Annales de l'I.H.P. Physique th\'eorique}, 51(2):221--229, 1989.

\bibitem{GiniVelo2}
J.~Ginibre and G.~Velo.
\newblock {Smoothing properties and existence of solutions for the generalized
  Benjamin-Ono equation}.
\newblock {\em Journal of Differential Equations}, 93(1):150 -- 212, 1991.

\bibitem{FRAC}
L.~Grafakos and S.~Oh.
\newblock {The Kato-Ponce Inequality}.
\newblock {\em Communications in Partial Differential Equations},
  39(6):1128--1157, 2014.

\bibitem{OscarHBO}
J.~Hickman, F.~Linares, O.~Ria\~no, K.~Rogers, and J.~Wright.
\newblock {On a Higher Dimensional Version of the Benjamin--Ono Equation}.
\newblock {\em SIAM Journal on Mathematical Analysis}, 51(6):4544--4569, 2019.

\bibitem{Ionescu2008}
A.~Ionescu, C.~Kenig, and D.~Tataru.
\newblock {Global well-posedness of the KP--I initial-value problem in the
  energy space}.
\newblock {\em Inventiones mathematicae}, 173(2):265--304, 2008.

\bibitem{Iorio}
R.~J. I\'orio.
\newblock {On the Cauchy problem for the Benjamin-Ono equation}.
\newblock {\em Communications in Partial Differential Equations},
  11(10):1031--1081, 1986.

\bibitem{IsazLP}
P.~Isaza, F.~Linares, and G.~Ponce.
\newblock {On the Propagation of Regularity and Decay of Solutions to the
  k--Generalized Korteweg--de Vries Equation}.
\newblock {\em Communications in Partial Differential Equations},
  40(7):1336--1364, 2015.

\bibitem{BOZKded}
M.~C. Jorge, G.~Cruz-Pacheco, L.~Mier-y Teran-Romero, and N.~F. Smyth.
\newblock {Evolution of two-dimensional lump nanosolitons for the
  Zakharov-Kuznetsov and electromigration equations}.
\newblock {\em Chaos: An Interdisciplinary Journal of Nonlinear Science},
  15(3):037104, 2005.

\bibitem{Kato}
T.~Kato.
\newblock {On the Cauchy problem for the (Generalized) Korteweg-de Vries
  equation}.
\newblock {\em Studies in Applied Mathematics, Adv. Math. Supplementary
  Studies}, 8:93--128, 1983.

\bibitem{KP}
T.~Kato and G.~Ponce.
\newblock {Commutator estimates and the Euler and Navier-Stokes equations}.
\newblock {\em Communications on Pure and Applied Mathematics}, 41(7):891--907,
  1988.

\bibitem{keLinaPOnV}
C.~Kenig, F.~Linares, G.~Ponce, and L.~Vega.
\newblock {On the regularity of solutions to the $k$--generalized Korteweg--de
  Vries equation}.
\newblock {\em Proc. Amer. Math. Soc.}, 46(9):3759--3766, 2018.

\bibitem{KenigKP}
C.~E. Kenig.
\newblock {On the local and global well-posedness theory for the KP-I
  equation}.
\newblock {\em Annales de l'Institut Henri Poincare (C) Non Linear Analysis},
  21(6):827 -- 838, 2004.

\bibitem{KenigKo}
C.~E. Kenig and K.~D. Koenig.
\newblock {On the local well-posedness of the Benjamin-Ono and modified
  Benjamin-Ono equations}.
\newblock {\em Mathematical Research Letters}, 10(6):879--895, 2003.

\bibitem{KochT}
H.~Koch and N.~Tzvetkov.
\newblock {On the local well-posedness of the Benjamin-Ono equation in
  $H^s(\mathbb{R})$}.
\newblock {\em International Mathematics Research Notices},
  2003(26):1449--1464, 2003.

\bibitem{BOZKded1}
J.~C. Latorre, A.~A. Minzoni, C.~A. Vargas, and N.~F. Smyth.
\newblock {Evolution of Benjamin-Ono solitons in the presence of weak
  Zakharov-Kutznetsov lateral dispersion}.
\newblock {\em Chaos: An Interdisciplinary Journal of Nonlinear Science},
  16(4):043103, 2006.

\bibitem{Dli}
D.~Li.
\newblock {On Kato-Ponce and Fractional Leibniz}.
\newblock {\em Revista Matem\'atica Iberoamericana}, 35(1):23--100, 2019.

\bibitem{FKP}
F.~Linares, D.~Pilod, and J.~Saut.
\newblock {The Cauchy Problem for the Fractional Kadomtsev--Petviashvili
  Equations}.
\newblock {\em SIAM Journal on Mathematical Analysis}, 50(3):3172--3209, 2018.

\bibitem{2019P1Argenis}
A.~J. {Mendez}.
\newblock {On the propagation of regularity for solutions of the Dispersion
  Generalized Benjamin-Ono Equation}.
\newblock {\em arXiv:1901.00823}, 2019, to appear in Analysis \& PDE.

\bibitem{2019P2Argenis}
A.~J. Mendez.
\newblock {On the propagation of regularity for solutions of the fractional
  Korteweg-de Vries equation}.
\newblock {\em Journal of Differential Equations}, 269(11):9051 -- 9089, 2020.

\bibitem{ArgenisZK}
A.~J. {Mendez}.
\newblock {On the propagation of regularity for solutions of the
  Zakharov-Kuznetsov equation}.
\newblock {\em arXiv: 2008.11252}, 2020.

\bibitem{ANPhD}
A.~C. Nascimento.
\newblock {On special regularity properties of solutions of the
  Benjamin-Ono-Zakharov-Kuznetsov (BO-ZK) equation}.
\newblock {\em Communications on Pure \& Applied Analysis}, 19(9):4285--4325,
  2020.

\bibitem{PShrira}
D.~E. Pelinovsky and V.~I. Shrira.
\newblock {Collapse transformation for self-focusing solitary waves in
  boundary-layer type shear flows}.
\newblock {\em Physics Letters A}, 206(3):195 -- 202, 1995.

\bibitem{raymond1991elementary}
X.~Raymond.
\newblock {\em {Elementary Introduction to the Theory of Pseudodifferential
  Operators}}.
\newblock Studies in Advanced Mathematics. CRC Press, Boca Raton, 1991.

\bibitem{OscarCapila}
O.~{Ria{\~n}o}.
\newblock {Well-posedness for a two-dimensional dispersive model arising from
  capillary-gravity flows}.
\newblock {\em arXiv:2005.09184}, 2020.

\bibitem{RibVento}
F.~Ribaud and S.~Vento.
\newblock {Local and global well-posedness results for the
  Benjamin--Ono-Zakharov--Kuznetsov equation}.
\newblock {\em Discrete \& Continuous Dynamical Systems--A}, 37(1):449--483,
  2017.

\bibitem{steinSingular}
E.~Stein.
\newblock {\em {Singular Integrals and Differentiability Properties of
  Functions}}.
\newblock Princeton Mathematical Series, No 30. Princeton University Press,
  1970.

\bibitem{stein1993harmonic}
E.~Stein.
\newblock {\em {Harmonic Analysis: Real-variable Methods, Orthogonality, and
  Oscillatory Integrals}}.
\newblock Monographs in harmonic analysis. Princeton University Press, 1993.

\bibitem{ZK}
V.~Zakharov and E.~Kuznetsov.
\newblock {Three-dimensional solitons}.
\newblock {\em Soviet Physics JETP}, 29:594--597, 01 1974.

\end{thebibliography}

\end{document}